\def\GL{\mbox{\rm GL}}
\def\SL{\mbox{\rm SL}}
\def\Ad{\operatorname{Ad}}
\def\Ann{\operatorname{Ann}}
\def\Aff{\operatorname{Aff}}
\def\chr{\operatorname{char}}
\def\col{\operatorname{col}}
\def\Dist{\operatorname{Dist}}
\def\End{\operatorname{End}}
\def\Hom{\operatorname{Hom}}
\def\Lie{\operatorname{Lie}}
\def\gr{\operatorname{gr}}
\def\max{\operatorname{max}}
\def\Rep{\operatorname{Rep}}
\def\row{\operatorname{row}}
\def\Lie{\operatorname{Lie}}
\def\Tab{\operatorname{Tab}}
\def\tr{\operatorname{tr}}
\def\b{{\mathfrak b}}
\def\g{{\mathfrak g}}
\def\n{{\mathfrak n}}
\def\t{{\mathfrak t}}
\def\sl{{\mathfrak{sl}}}
\def\pgl{{\mathfrak{pgl}}}
\def\C{{\mathbb C}}
\def\F{{\mathbb F}}
\def\N{{\mathbb N}}
\def\O{{\mathbb O}}
\def\Q{{\mathbb Q}}
\def\Z{{\mathbb Z}}
\def\sC{{\mathscr C}}
\def\cA{{\mathcal A}}
\def\cB{{\mathcal B}}
\def\cI{{\mathcal I}}
\def\cS{{\mathcal S}}
\def\cV{{\mathcal V}}
\def\cVA{{\cV\cA}}
\newcommand{\twopartdef}[4]{\left\{
	\begin{array}{ll}
		#1 & \mbox{if } #2 \\
		#3 & \mbox{if } #4
	\end{array}
	\right.}
\newcommand{\arxiv}[1]{{\tt arXiv:#1}}
\newcommand{\bK}{{\mathbb K}}
\newcommand{\bQ}{{\mathbb Q}}
\renewcommand{\g}{{\mathfrak g}}
\newcommand{\gl}{{\mathfrak g}{\mathfrak l}}
\newcommand{\m}{{\mathfrak m}}
\newcommand{\hgt}{{{\mbox{\rm ht}}}}
\numberwithin{equation}{section}
\newtheorem{theorem}{Theorem}[section]
\newtheorem{prop}[theorem]{Proposition}
\newtheorem{lemma}[theorem]{Lemma}
\newtheorem{cor}[theorem]{Corollary}
\newtheorem*{conj}{Conjecture}
\theoremstyle{definition}
\newtheorem{exa}{Example}
\theoremstyle{remark}
\newtheorem*{rmk}{Remark}
\begin{document}
	\title
	{Two problems in the representation theory of reduced enveloping algebras}
\author{Matthew Westaway}
\address{Department of Mathematical Sciences,
University of Bath, Claverton Down,
Bath, BA2 7AY,
UK}
\email{mpwestaway@gmail.com}
\date{\today}
\subjclass[2020]{Primary: 17B10, 17B50; Secondary: 17B08, 17B20}
\keywords{Reduced enveloping algebras, tensor products, baby Verma modules, minimal-dimensional modules, associated varieties}

\begin{abstract}
In this paper we consider two problems relating to the representation theory of Lie algebras $\g$  of reductive algebraic groups $G$ over algebraically closed fields $\bK$ of positive characteristic $p>0$. First, we consider the tensor product of two baby Verma modules $Z_{\chi}(\lambda)\otimes Z_{\chi'}(\mu)$ and show that it has a filtration of baby Verma modules of a particular form. Secondly, we consider the minimal-dimension representations of a reduced enveloping algebra $U_\chi(\g)$ for a nilpotent $\chi\in\g^{*}$. We show that under certain assumptions in type $A$ we can obtain the minimal-dimensional modules as quotients of certain modules obtained by base change from simple highest weight modules over $\C$. 
\end{abstract}
\maketitle
\section{Introduction}\label{s: Intro}

This paper tackles two questions relating to the representation theory of Lie algebras of reductive algebraic groups in positive characteristic. In this introduction we begin by setting up some general notation, before discussing the two questions in more detail.

Let $G$ be a reductive algebraic group over an algebraically closed field $\bK$ of characteristic $p>0$ satisfying the standard hypotheses (see \cite[\S 6.3--6.4]{Janmodreps}), and let $\g$ be its Lie algebra. We pick a maximal torus $T$ of $G$ and a Borel subgroup $B$ of $G$ containing $T$, and denote their Lie algebras by $\t$ and $\b$ respectively. We then denote by $X(T)$ the character group of $T$, by $\Phi\subseteq X(T)$ the corresponding root system, and by $\Phi^{+}$ and $\Pi$ the sets of positive and simple roots in $\Phi$ corresponding to $B$. Given $\alpha\in\Phi$, we write $\g_\alpha$ for the associated root space in $\g$ and we write $\n^{+}=\bigoplus_{\alpha\in\Phi^{+}}\g_\alpha$ and $\n^{-}=\bigoplus_{\alpha\in\Phi^{+}}\g_{-\alpha}$.

Given $\chi\in\g^{*}$ we may define the reduced enveloping algebra $U_\chi(\g)$; since every simple $\g$-module is a simple $U_\chi(\g)$-module for some $\chi\in\g^{*}$, we may reduce many questions about $\g$-modules to questions about $U_\chi(\g)$-modules. Assume now that $\chi(\n^{+})=0$ (which we are usually permitted to do without loss of generality) and denote $$\Lambda_\chi=\{\lambda\in\t^{*}\mid \lambda(h)^p-\lambda(h^{[p]})=\chi(h)^p\mbox{ for all }h\in\t\}$$ (here, $x\mapsto x^{[p]}$ denotes the restricted structure on $\g$ and $\t$). Given $\lambda\in\Lambda_\chi$ we may then define the baby Verma module $Z_\chi(\lambda)=U_\chi(\g)\otimes_{U_\chi(\b)} \bK_\lambda$, where $\bK_\lambda$ is the one-dimensional $U_\chi(\b)$-module on which $\n^{+}$ acts as zero and $\t$ acts via $\lambda$. Each simple $U_\chi(\g)$-module is then a quotient of a baby Verma module.

In the important case when $\chi$ is in standard Levi form (which we don't define in this introduction, but can be found in Subsection~\ref{ss: Prelims}), each baby Verma module $Z_\chi(\lambda)$ in fact has a {\em unique} simple quotient, which we denote $L_\chi(\lambda)$. Since the baby Verma modules are finite-dimensional they all have composition series; one broad question in this area is then to determine the composition multiplicities $$[Z_\chi(\lambda):L_\chi(\mu)]$$
for $\lambda,\mu\in\Lambda_\chi$. More generally, we wish to understand the structure of the simple $U_\chi(\g)$-modules.

Let us turn now to the first problem we tackle. Given $\chi,\chi'\in\g^{*}$, we may take the tensor product of a $U_\chi(\g)$-module $M$ and a $U_{\chi'}(\g)$-module $N$ to get a $U_{\chi+\chi'}(\g)$-module $M\otimes N$. In particular, given $\lambda\in\Lambda_\chi$ and $\mu\in\Lambda_{\chi'}$ we may form the $U_{\chi+\chi'}(\g)$-module $$Z_\chi(\lambda)\otimes Z_{\chi'}(\mu).$$ The first goal of this paper is then to understand a little about the structure of this module. The main result (Theorem~\ref{thm: Filt of tens}) we prove is the following; in this statement, we enumerate $\Phi^{+}=\{\gamma_1,\ldots,\gamma_D\}$.

\begin{theorem}\label{thm: Intro: Filt of tens}
Let $\chi,\chi'\in\g^{*}$ with $\chi(\n^{+})=\chi'(\n^{+})=0$, and let $\lambda\in\Lambda_\chi$ and $\mu\in \Lambda_{\chi'}$. Then $Z_\chi(\lambda)\otimes Z_{\chi'}(\mu)$ has a $U_{\chi+\chi'
}(\g)$-module filtration in which the successive quotients are precisely the modules $$Z_{\chi+\chi'}(\lambda+\mu -b_D\gamma_D-\cdots - b_1\gamma_1) \quad \mbox{for} \quad 0\leq b_1,\ldots, b_D<p.$$
Each such module appears precisely once in the filtration for each tuple $(b_1,\ldots,b_D)\in[0,p)^{D}$.
\end{theorem}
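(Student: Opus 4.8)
The plan is to reduce the computation to a statement purely about the restricted enveloping algebra $U_0(\g)$ (or about $U_0(\n^-)$) by exploiting the standard identifications of baby Verma modules with free modules over $U_\chi(\n^-)$. First I would recall that, since $\chi(\n^+)=0$, as a $U_\chi(\n^-)$-module (and in particular as a vector space) we have $Z_\chi(\lambda)\cong U_\chi(\n^-)\cong U_0(\n^-)$, with a PBW-type basis indexed by exponent tuples $(b_1,\dots,b_D)\in[0,p)^D$ where $b_i$ is the power of a fixed root vector $f_i\in\g_{-\gamma_i}$. So $Z_\chi(\lambda)\otimes Z_{\chi'}(\mu)$ has dimension $p^{2D}$, and the proposed filtration, if it exists, must have exactly $p^D$ steps, each a baby Verma module of dimension $p^D$; the list of weights $\lambda+\mu-\sum b_i\gamma_i$ is exactly right, so the content of the theorem is the existence of a $\g$-module filtration realizing it.

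The key construction is to filter the left tensor factor. On $Z_\chi(\lambda)$ I would put the filtration whose $j$-th piece is spanned by the PBW monomials $f_D^{b_D}\cdots f_1^{b_1} v_\lambda$ with $\sum b_i$ (or better, with the tuple $(b_D,\dots,b_1)$ in some fixed total order refining the partial order by total degree) at most some bound; the point is to choose the order on exponent tuples so that each graded piece is one-dimensional and the associated graded of $Z_\chi(\lambda)$ as a module for... — but $Z_\chi(\lambda)$ is not filtered as a $\g$-module in an obvious way, so instead I would work with $Z_\chi(\lambda)\otimes Z_{\chi'}(\mu)$ directly, filtered by $F_k = \sum (\text{span of } f_D^{b_D}\cdots f_1^{b_1} v_\lambda : (b_1,\dots,b_D)\preceq k\text{-th tuple}) \otimes Z_{\chi'}(\mu)$. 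Each $f_i$ has weight $-\gamma_i$, and for the graded pieces one checks: (i) $\n^+$ maps a monomial $f_D^{b_D}\cdots f_1^{b_1}v_\lambda \otimes z$ into lower filtration degree tensored with $Z_{\chi'}(\mu)$, plus the monomial tensored with the $\n^+$-action on $z$ — here one needs the commutator $[e_\alpha, f_i]$ terms to strictly decrease the chosen order, which forces the order to be compatible with the height/degree of $\Phi^+$ and is where care is required; (ii) $f_i$ acting on $f_D^{b_D}\cdots f_1^{b_1}v_\lambda\otimes z$ gives (monomial with $b_i\mapsto b_i+1$, or lower if $b_i=p-1$ using $f_i^p=f_i^{[p]}+\chi(f_i)^p$, which again lands in a controlled filtration degree) $\otimes z$ plus monomial $\otimes f_i z$; (iii) $\t$ acts on the $k$-th graded piece by $\lambda+\mu-\sum b_i\gamma_i$ plus the original $\t$-action on $z$. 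Assembling (i)--(iii), the $k$-th graded piece $F_k/F_{k-1}$, as a $U_{\chi+\chi'}(\g)$-module, is isomorphic to $\bK_{\lambda - \sum b_i\gamma_i}\otimes Z_{\chi'}(\mu)$ as a $\b$-module-induced object, i.e. to $U_{\chi+\chi'}(\b)\otimes_{U_{\chi+\chi'}(\b)} (\bK_{\lambda-\sum b_i\gamma_i}\otimes Z_{\chi'}(\mu))$; tensor identity / induction in stages then identifies this with $Z_{\chi+\chi'}(\lambda+\mu-\sum b_i\gamma_i)$, using that $U(\g)\otimes_{U(\b)}(\bK_\nu \otimes Z_{\chi'}(\mu)) \cong U(\g)\otimes_{U(\b)}(\bK_\nu\otimes U(\g)\otimes_{U(\b)}\bK_\mu)$ collapses correctly by the projection formula $M\otimes \Ind N \cong \Ind(M\otimes N)$ when $M$ is suitably free — more precisely, the cleanest route is the identity $Z_{\chi}(\lambda)\otimes Z_{\chi'}(\mu)\cong U_{\chi+\chi'}(\g)\otimes_{U_{\chi+\chi'}(\b)}(\bK_\lambda\otimes Z_{\chi'}(\mu))$ (induction is exact and compatible with tensoring by a module restricted from $\g$), and then one only needs to filter the $\b$-module $\bK_\lambda\otimes Z_{\chi'}(\mu)$.

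So the streamlined plan has two stages. Stage one: prove the isomorphism $Z_\chi(\lambda)\otimes Z_{\chi'}(\mu)\cong U_{\chi+\chi'}(\g)\otimes_{U_{\chi+\chi'}(\b)}\bigl(\bK_\lambda\otimes Z_{\chi'}(\mu)|_\b\bigr)$; this is the ``tensor identity'' and follows from the standard projection-formula isomorphism $\mathrm{Ind}_\b^\g(V)\otimes W\cong \mathrm{Ind}_\b^\g(V\otimes W|_\b)$ for $W$ a $\g$-module, which holds at the level of reduced enveloping algebras because both sides are $U_{\chi+\chi'}(\g)$ tensored over $U_{\chi+\chi'}(\b)$ with the right thing and the obvious map $u\otimes v\otimes w\mapsto u(1\otimes v)\otimes uw$... is checked to be an isomorphism by a PBW dimension count. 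Stage two: filter the $\b$-module $\bK_\lambda\otimes Z_{\chi'}(\mu)$. Here $Z_{\chi'}(\mu)|_\b$ has the PBW filtration by subspaces $M_{\le k}=\mathrm{span}(f_D^{b_D}\cdots f_1^{b_1}v_\mu)$ ordered by total degree (refined to a total order); each $\b$-subquotient is one-dimensional with $\t$ acting by $\mu-\sum b_i\gamma_i$ and $\n^+$ acting by $0$ — this is exactly where the degree-decreasing property of $[\n^+,\n^-]\subseteq \n^-\oplus\t$ and the relation $e_\alpha v_\mu\in$ lower degree is used, and the $p$-th power relations $f_i^p = f_i^{[p]}+\chi'(f_i)^p$ stay inside $\le (p-1)$-degrees so the filtration has exactly the $p^D$ graded pieces $\bK_{\mu-\sum b_i\gamma_i}$. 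Tensoring with $\bK_\lambda$ shifts weights by $\lambda$; applying the exact functor $U_{\chi+\chi'}(\g)\otimes_{U_{\chi+\chi'}(\b)}(-)$ then turns this into the desired filtration of $Z_\chi(\lambda)\otimes Z_{\chi'}(\mu)$ with graded pieces $Z_{\chi+\chi'}(\lambda+\mu-\sum b_i\gamma_i)$, each occurring once for each $(b_1,\dots,b_D)\in[0,p)^D$. The main obstacle, and the step deserving the most care, is verifying that $\n^+$ acts by zero on each graded piece of the $\b$-filtration of $Z_{\chi'}(\mu)|_\b$: one must check that for $e_\alpha\in\g_\alpha$ with $\alpha\in\Phi^+$, the element $e_\alpha\cdot f_D^{b_D}\cdots f_1^{b_1}v_\mu$ lies in strictly lower filtration degree, which requires an inductive commutation argument controlling both the $\n^+$-terms that annihilate $v_\mu$ and the $\t$-terms that act by scalars — this is standard but is the technical heart; a secondary subtlety is confirming the tensor identity at the reduced (rather than ordinary) enveloping algebra level, which is routine once one tracks the $p$-characters through the projection formula.
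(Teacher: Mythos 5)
Your route is essentially the paper's own: you identify $Z_\chi(\lambda)\otimes Z_{\chi'}(\mu)$ with the module induced from the $U_{\chi+\chi'}(\b)$-module $\bK_\lambda\otimes Z_{\chi'}(\mu)$ (this is Proposition~\ref{prop: Z=V}; the paper proves it just as you suggest, by Frobenius reciprocity together with a basis/dimension count, Lemma~\ref{lem: ZoZ basis}, rather than by invoking an abstract tensor identity), then filter that $\b$-module by $\t$-stable subspaces with one-dimensional quotients on which $\n^{+}$ acts trivially, and finally apply the exact induction functor. This is exactly the structure of the paper's proof of Theorem~\ref{thm: Filt of tens}, with your stage two corresponding to Corollary~\ref{cor: filt}.

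The one point where your sketch, taken literally, would fail is the choice of filtration on $Z_{\chi'}(\mu)|_{\b}$: ordering the monomials by \emph{total} degree does not make $\n^{+}$ strictly lower the filtration. For instance, in type $A_2$ one has $e_{\alpha_1}\,e_{-\alpha_1-\alpha_2}u_\mu = c\,e_{-\alpha_2}u_\mu$ with $c\neq 0$, and both monomials have total degree $1$. The invariant that works is the weighted degree $\sum_i b_i\hgt(\gamma_i)$, i.e.\ the paper's height filtration; your parenthetical remark that the refinement must be ``compatible with the height'' is precisely the needed fix, but it is the height, not the total degree, that has to drive the filtration. Moreover, the ``inductive commutation argument'' you defer as standard is the actual technical content of the paper's argument (Lemma~\ref{lem: neg root Z} and Proposition~\ref{prop: pos root Z}): one must control not just single commutators $[e_\alpha,e_{-\gamma_l}]$ but the full rewriting of the resulting expressions into the monomial basis, including the reductions $e_{-\gamma}^{p}=\chi'(e_{-\gamma})^{p}$ in $U_{\chi'}(\n^{-})$, and this is done by a double induction (on the filtration level and on the index of the root). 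With the height filtration in place of total degree and that lemma supplied, the remainder of your argument --- the pieces are $\t$-stable spans of weight vectors, the quotients have weight $\mu-\sum_i b_i\gamma_i$, tensoring with $\bK_\lambda$ shifts by $\lambda$, and exactness of induction produces the quotients $Z_{\chi+\chi'}(\lambda+\mu-\sum_i b_i\gamma_i)$, one for each tuple --- coincides with the paper's proof.
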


Before moving on to the second topic of this paper, let us discuss a little bit of the motivation for this question. Firstly, we note that in the case where $\chi$ is regular nilpotent and $\chi'=-\chi$ the module $Z_\chi(\lambda)\otimes Z_{-\chi}(\mu)$ was studied briefly by Bezrukavnikov and Riche in \cite[Proposition 4.4]{BR}, where it played in a key role in their proof that the affine Hecke category acts on the principal block of $\Rep(G)$ when $p$ is greater than the Coxeter number. We also note that analogous questions regarding composition multiplicities in tensor products of modules have been studied in a great many settings, and it is sensible to explore these questions for reduced enveloping algebras as well. For example, the tensor product of two finite-dimensional simple $\GL_N(\C)$-modules (equivalently, finite-dimensional simple $\gl_N(\C)$-modules) decomposes into simple constituents according to the Littlewood-Richardson rule. For a general symmetrisable Kac-Moody algebra over $\C$ (for example, a complex semisimple Lie algebra), this was generalised by Littelmann through the use of Littelmann paths \cite{Li1,Li2}. The tensor product of certain polynomial $\GL_N(\bK)$-modules has also been studied by Brundan and Kleshchev in \cite{BKtens,BKsum}.

Let us turn now to the second subject of this paper. Note that the reductive algebraic group $G$ acts on $\g^{*}$ via the coadjoint action; in particular, we may consider coadjoint $G$-orbits in $\g^{*}$. One of the first major conjectures in the representation theory of Lie algebras in positive characteristic was due to Kac and Weisfeiler in \cite{KW}, and it posited that $$p^{\dim (G\cdot\chi)/2}\mid \dim M$$ for all $U_\chi(\g)$-modules $M$. This was proved by Premet in \cite{PrKW}. A related conjecture was made by Humphreys in \cite{Hu2} (see also Kac's comment in \cite{Kac}), which states that there always exists a $U_\chi(\g)$-module with dimension exactly $p^{\dim (G\cdot\chi)/2}$. This is proved under the standard hypotheses by Premet and Topley in \cite{PT21} (although can be deduced more easily in type $A$ using parabolic induction). Combining these two results yields the fact that the minimal dimension of a $U_\chi(\g)$-module is $p^{\dim (G\cdot\chi)/2}$; we therefore refer to $U_\chi(\g)$-modules of this dimension as {\em minimal-dimensional modules}.

Many questions remain open about the minimal-dimensional $U_\chi(\g)$-modules. For $\g=\gl_N(\bK)$ a classification of such modules was obtained in \cite{GT} (which we make significant use of in this paper), but a classification in other types remains open. The conjecture regarding minimal-dimensional modules that we focus on was made in \cite[Remark 5.9]{PT21}, but requires a little bit of set-up. We go through this in more detail in Section~\ref{s: Min Dim'l Mods}, so we only give the basic idea here.

Suppose that $\g_\C$ is a reductive Lie algebra over $\C$ with suitable (Chevalley) basis $\cB$ and let $R$ be a localisation of $\Z$ at a finite number of primes such that $p$ is not invertible in $R$. We then set $\g_R$ to be the $R$-span of $\cB$ and define the $\bK$-Lie algebra $\g_\bK=\g_R\otimes_R \bK$ (that this is a Lie algebra obviously requires that our $\cB$ be chosen appropriately). We now assume that $\g_\C$ and $\cB$ are chosen so that $\g_\bK=\g$. Given $\lambda:\t_R\to R$ (where $\t_R=\t_\C\cap\g_R$ for a Cartan subalgebra $\t_\C$ of $\g_\C$) we may form the Verma module $M_\C(\lambda)$ and its simple quotient $L_\C(\lambda)$. These are modules for the universal enveloping algebra $U(\g_\C)$, and thus for its $R$-form $U(\g_R)$. Letting $\overline{v}_\lambda$ be the natural 
generator of $L_\C(\lambda)$, we define $L_R(\lambda)=U(\g_R)\overline{v}_\lambda$ and $L_p(\lambda)=L_R(\lambda)\otimes_R \bK$. This is a $U(\g)$-module. Given $\chi\in \g^{*}$, we can define the ideal $J_\chi$ of $U(\g)$ generated by the elements $x^p-x^{[p]}-\chi(x)^p$ for $x\in\g$. Then $L_p^\chi(\lambda):=L_p(\lambda)/J_\chi L_p(\lambda)$ is a (possibly trivial) $U_\chi(\g)$-module.

A weak version of the conjecture of \cite{PT21} is as follows. To state it, recall that under the standard hypotheses there is a one-to-one correspondence between nilpotent $G_\C$-orbits in $\g_\C^{*}$ and nilpotent $G$-orbits in $\g^{*}$ (where $G_\C$ is defined to be a complex reductive algebraic group such that $\Lie(G_\C)=\g_\C$). This correspondence has the property that for a nilpotent orbit $\O_\C$ in $\g_\C^{*}$ we may choose a representative $\chi$ which maps $\g_R$ into $R$ and which has the property that $\overline{\chi}:=\chi\otimes 1\in \g^{*}$ lies in the corresponding nilpotent orbit $\O$ in $\g^{*}$. We recall furthermore that a primitive ideal $I$ in $U(\g_\C)$ is called {\em completely prime} if $U(\g_\C)/I$ is a domain, and that the associated variety $\cVA'(I)$ of $I$ is a subvariety of $[\g_\C,\g_\C]^{*}$ (defined in Subsection~\ref{ss: Assoc Var}) which is the closure of a nilpotent orbit by Joseph's irreducibility theorem \cite{Jo81,Jo85}.

\begin{conj}

Let $R=\Z[1/q\mid q \mbox{ is a bad prime for }\Phi]$. For each nilpotent coadjoint $G_\C$-orbit $\O_\C$ in $\g_\C^{*}$, there exists a representative $\chi\in \O_\C$ mapping $\g_R$ into $R$ and there exists $\widehat{\chi}\in G\cdot\overline{\chi}$ such that the following result is true:

Let $L$ be a minimal-dimensional $U_{\widehat{\chi}}(\g)$-module. Then there exists $\lambda\in\t_R^{*}$ such that $\Ann_{U(\g_\C)}(L_\C(\lambda))$ is completely prime, $\cVA'(\Ann_{U(\g_\C)}(L_\C(\lambda)))=\overline{G_\C\cdot\chi}$, and $L$ is a composition factor of $L_p^{\widehat{\chi}}(\lambda)$.

\end{conj}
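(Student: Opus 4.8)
The plan is to reduce to $\g=\gl_N(\bK)$, to realise every minimal-dimensional $U_{\widehat\chi}(\g)$-module by parabolic induction from a one-dimensional module, and to identify the companion complex module as a parabolic Verma module whose annihilator is an induced ideal. Fix the nilpotent $G_\C$-orbit $\O_\C$; under the standard hypotheses we use the trace form to identify $\g_\C^{*}\cong\g_\C$ and $\g^{*}\cong\g$, and let $\mu\vdash N$ be its Jordan type. Let $\p_\C\subseteq\g_\C$ be the standard parabolic subalgebra whose Levi $\l_\C$ has blocks the parts of the transposed partition $\mu^{t}$, with nilradical $\n_{\p,\C}$, and pick a Richardson element $e\in\n_{\p}$ in the dense $P$-orbit, defined over $R$ (with $0$--$1$ matrix entries, say). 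Put $\chi=\langle e,-\rangle$; then $\chi$ maps $\g_R$ into $R$, $\chi(\p)=0$ (so $\chi(\n^{+})=0$), and $G_\C\cdot e=\O_\C$ while $G\cdot\overline\chi=\O$, so we may take $\widehat\chi=\overline\chi$ (this is where the freedom in the conjecture to pass to the $G$-orbit is used: $e$ must be a Richardson element, not merely in Jordan form). The goal is then, for each minimal-dimensional $U_{\widehat\chi}(\g)$-module $L$, to produce $\lambda\in\t_R^{*}$ which is integral, trivial on $[\l_\C,\l_\C]$, and sufficiently antidominant relative to $\g_\C/\l_\C$, such that $L_\C(\lambda)$ equals the parabolic Verma module $M_\C^{\p}(\lambda)=U(\g_\C)\otimes_{U(\p_\C)}\C_\lambda$ and $L_p^{\widehat\chi}(\lambda)\cong L$.

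The complex side is straightforward. For $\lambda$ as above one has $L_\C(\lambda)=M_\C^{\p}(\lambda)$, hence $\Ann_{U(\g_\C)}(L_\C(\lambda))=\Ind_{\p_\C}^{\g_\C}\!\bigl(\Ann_{U(\l_\C)}(\C_\lambda)\bigr)$; since $U(\l_\C)/\Ann_{U(\l_\C)}(\C_\lambda)=\C$, this induced ideal is completely prime, because in type $A$ induction of a completely prime primitive ideal from a parabolic is again completely prime. Its associated variety is the closure of the Richardson orbit $\Ind_{\l_\C}^{\g_\C}(\{0\})$, whose Jordan type is $(\mu^{t})^{t}=\mu$, so $\cVA'(\Ann_{U(\g_\C)}(L_\C(\lambda)))=\overline{\O_\C}=\overline{G_\C\cdot\chi}$; the role of the antidominance hypothesis is precisely to guarantee $\Ann_{U(\g_\C)}(L_\C(\lambda))=\Ann_{U(\g_\C)}(M_\C^{\p}(\lambda))$. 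Thus the first two conditions of the conjecture hold for any such $\lambda$.

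Reduction modulo $p$ then comes out cleanly. Because $L_\C(\lambda)=M_\C^{\p}(\lambda)$ is generated by a highest weight vector on which $\p_\C$ acts through $\lambda$, its natural $R$-form is $L_R(\lambda)=U(\g_R)\otimes_{U(\p_R)}R_\lambda\cong U(\n^-_{\p,R})$, which is $R$-free, so $L_p(\lambda)=U(\g)\otimes_{U(\p)}\bK_\lambda$ and hence
\[
L_p^{\widehat\chi}(\lambda)=\bigl(U(\g)/J_{\widehat\chi}\bigr)\otimes_{U(\g)}L_p(\lambda)=U_{\widehat\chi}(\g)\otimes_{U(\p)}\bK_\lambda .
\]
Since $\widehat\chi|_{\p}=0$ and $\lambda$ is integral, one checks — using $p>N$ so that $x^{[p]}=0$ for $x\in\n_{\p}$, the triviality of $\lambda$ on $[\l,\l]$, and Fermat's little theorem on the torus part — that $\bK_\lambda$ is already a module for $U_{\widehat\chi}(\p)=U_0(\p)$, so $L_p^{\widehat\chi}(\lambda)=U_{\widehat\chi}(\g)\otimes_{U_{\widehat\chi}(\p)}\bK_\lambda$. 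This module is nonzero (PBW) of dimension exactly $p^{\dim\n_{\p}}=p^{(\dim\g-\dim\l)/2}=p^{\dim(G\cdot\widehat\chi)/2}$, using that $\dim\l=\dim\g_e$ for a Richardson element. Having dimension equal to the minimal possible for a nonzero module, it is automatically simple and minimal-dimensional; so once we arrange $L_p^{\widehat\chi}(\lambda)\cong L$ the last condition holds too, with $L$ the unique composition factor.

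The remaining step, which I expect to be the main obstacle, is the combinatorial matching. The module $U_{\widehat\chi}(\g)\otimes_{U_{\widehat\chi}(\p)}\bK_\lambda$ depends only on $\lambda$ modulo $p$, that is on the residues $\lambda(E_{ii})\in\F_p$, which are constant along the blocks of $\l$; there are thus $p^{\ell}$ of them, $\ell$ being the number of parts of $\mu^{t}$. Using the classification of \cite{GT} one must (i) verify that these $p^{\ell}$ modules are pairwise non-isomorphic (e.g.\ by their central characters or restricted highest weights) and exhaust all minimal-dimensional $U_{\widehat\chi}(\g)$-modules, matching the parameters of \cite{GT} with the residue data, and (ii) check that every admissible residue tuple lifts to an integral $\lambda\in\t_R^{*}$ that is simultaneously trivial on $[\l,\l]$ and antidominant enough relative to $\g_\C/\l_\C$ — for instance $\lambda=\lambda_0-p\omega$ with $\omega$ a large dominant $[\l,\l]$-trivial weight — so that $L_\C(\lambda)$ really is the parabolic Verma module and the $R$-form computation above is valid. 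Reconciling the two descriptions and controlling the genericity of the lifts is where the auxiliary hypotheses ($\g=\gl_N$, $p$ large relative to $N$) should enter; it is also plausible that Theorem~\ref{thm: Intro: Filt of tens} is invoked — via a baby Verma filtration of $L_p^{\widehat\chi}(\lambda)$ — to pin down the composition factors in any cases where $\lambda$ cannot be chosen so as to make $M_\C^{\p}(\lambda)$ simple.
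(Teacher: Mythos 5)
First, note the status of this statement in the paper: it is a \emph{conjecture} (a weak form of Premet--Topley's), and the paper does not prove it. What the paper establishes is a type~$A$ analogue (Theorem~\ref{thm: summation}) under the additional hypotheses (R1), (R2($\lambda$)), (R3($\lambda$)), and by a quite different route: $\lambda$ is taken to be $\lambda_{\widehat{A}}-\rho$ for a column-connected, row-standard integral lift $\widehat{A}$ of the Goodwin--Topley tableau of $L$; complete primeness and $\cVA'(I(\lambda))=\overline{\O}_\pi$ come from Brundan's results (Moeglin/Joseph/Losev) since $\widehat{A}$ is semistandard; and the mod-$p$ statement is obtained not by showing $L_p^{\chi}(\lambda)$ is itself a simple induced module, but by showing $\chi_\pi\in V_p(\lambda)$ (via $\chi_\pi\in\cV_{\g_\C}L_\C(\lambda)$, base change under (R2), $\widetilde{B}$-stability and a dimension count, then Premet's Lemma~\ref{lem: Vp Lchi n0}), which gives $L_p^\chi(\lambda)\neq 0$ and hence, by Corollary~\ref{cor: surj hom simple}, a surjection onto the unique simple quotient $L_\chi(\widetilde{\lambda})=L$ of the baby Verma module.

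Your proposal has two genuine gaps. (1) The conjecture is stated for every reductive $G$ satisfying the standard hypotheses, and your opening ``reduce to $\g=\gl_N(\bK)$'' is not a reduction: nothing in the conjecture lets you pass from other types to type $A$, and outside type $A$ your key inputs (all orbits Richardson, Moeglin's complete primeness of induced ideals, the Goodwin--Topley classification) are unavailable or false. At best you are proposing a proof of the type~$A$ case, which is also all the paper achieves, and even then only under extra hypotheses on $p$ that the conjecture's $R$ (no bad primes exist in type $A$, so $R=\Z$) does not grant you --- e.g.\ the paper's (R1) already needs $p\nmid N$, and your ``$p>N$'' step is a further restriction (though in fact unnecessary: for $x$ in the nilradical you only need $x^{[p]}\in\n_{\p}$, not $x^{[p]}=0$). (2) Within type $A$, the decisive step is exactly the one you defer as ``the main obstacle'': showing that \emph{every} minimal-dimensional $L$ (every row-equivalence class of column-connected tableaux) is realised as $L_p^{\widehat{\chi}}(\lambda)$ for some $\lambda\in\t_R^{*}$ that is simultaneously integral, trivial on $[\l_\C,\l_\C]$, antidominant enough off the Levi (plain antidominance fails, since $\langle\lambda+\rho,\alpha^\vee\rangle=1$ on Levi simple roots, so you need Jantzen's simplicity criterion for generalized Verma modules), and has the prescribed residues mod $p$; moreover the residues of a column-connected tableau are constant along columns only after the $\rho$-shift and only in a column-compatible labelling, so the Levi-block/consecutive-index bookkeeping is itself nontrivial. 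Until (i) and (ii) of your last paragraph are carried out, you have not produced the required $\lambda$ for a given $L$, so the conclusion ``$L$ is a composition factor of $L_p^{\widehat{\chi}}(\lambda)$'' is not established; this is precisely the point where the paper instead invokes the Goodwin--Topley classification together with Premet's associated-variety machinery rather than attempting to identify $L_p^{\widehat{\chi}}(\lambda)$ with a parabolically induced simple module.
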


To state the full version of the conjecture from \cite{PT21} we would need to introduce some notions from the theory of finite $W$-algebras, but we omit such discussion in this paper.\footnote{To be precise, one should replace the property that $\Ann_{U(\g_\C)}(L_\C(\lambda))$ is completely prime with the property that 
$\Ann_{U(\g_\C)}(L_\C(\lambda))$ is a Losev-Premet ideal, in the language of \cite{GTW}.
} In type $A$, where we focus our attention, the conjecture stated in \cite{PT21} coincides with the one stated above, so we do not lose much by considering this conjecture instead.

In the current paper we begin by making some general observations relating to this conjecture in the case when $\chi$ is in standard Levi form. In particular, we easily get the following result (Corollary~\ref{cor: surj hom simple}).

\begin{prop}\label{cor: Intro surj hom simple}
Suppose that $\chi$ is in standard Levi form and that $\lambda\in\t_R^{*}$. If $L_p^\chi(\lambda)\neq 0$ then there is a surjective homomorphism of $U_\chi(\g)$-modules $L_p^\chi(\lambda)\twoheadrightarrow L_\chi(\overline{\lambda})$.
\end{prop}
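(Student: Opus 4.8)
The plan is to exhibit $L_\chi(\overline\lambda)$ as a quotient of $L_p^\chi(\lambda)$ by comparing universal properties of the two modules via their distinguished generators. Write $\overline v_\lambda \in L_\C(\lambda)$ for the natural highest weight generator, so that by construction $L_R(\lambda) = U(\g_R)\overline v_\lambda$ and $L_p(\lambda) = L_R(\lambda)\otimes_R\bK$; let $v$ denote the image of $\overline v_\lambda\otimes 1$ in $L_p^\chi(\lambda) = L_p(\lambda)/J_\chi L_p(\lambda)$. First I would record the key structural facts about $v$: it is a weight vector for $\t$ of weight $\overline\lambda$ (the mod-$p$ reduction of $\lambda$), it is annihilated by $\n^+$ (since $\overline v_\lambda$ is a highest weight vector in the complex Verma module, hence killed by the positive root vectors of the Chevalley basis, a relation that survives base change), and it generates $L_p^\chi(\lambda)$ as a $U_\chi(\g)$-module (because $\overline v_\lambda$ generates $L_R(\lambda)$ over $U(\g_R)$, so $v$ generates $L_p(\lambda)$ over $U(\g)$, hence over $U_\chi(\g)$ after quotienting by $J_\chi$). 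One must also check that $\overline\lambda \in \Lambda_\chi$, i.e. that $v$ genuinely lives in a $U_\chi(\g)$-module with the compatible $p$-character — but this is automatic since $L_p^\chi(\lambda)$ is by definition a module over $U_\chi(\g)$ and $v$ is a $\t$-weight vector of weight $\overline\lambda$, so the relations $h^p - h^{[p]} - \chi(h)^p = 0$ force $\overline\lambda(h)^p - \overline\lambda(h^{[p]}) = \chi(h)^p$.

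Next I would invoke the universal property of the baby Verma module: since $v\in L_p^\chi(\lambda)$ is a $\t$-weight vector of weight $\overline\lambda \in \Lambda_\chi$ killed by $\n^+$, the one-dimensional $U_\chi(\b)$-module $\bK_{\overline\lambda}$ maps to $U_\chi(\g)v = L_p^\chi(\lambda)$, and by Frobenius reciprocity this extends to a surjection $Z_\chi(\overline\lambda) = U_\chi(\g)\otimes_{U_\chi(\b)}\bK_{\overline\lambda} \twoheadrightarrow L_p^\chi(\lambda)$ sending the canonical generator to $v$. Now here is where the standard Levi form hypothesis enters: in that case the baby Verma module $Z_\chi(\overline\lambda)$ has a \emph{unique} maximal submodule and hence a unique simple quotient $L_\chi(\overline\lambda)$ (this was recalled in the introduction and is part of the preliminaries). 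So composing $Z_\chi(\overline\lambda) \twoheadrightarrow L_p^\chi(\lambda)$ with any surjection $L_p^\chi(\lambda)\twoheadrightarrow (\text{simple quotient})$ must land in $L_\chi(\overline\lambda)$; more directly, since $L_p^\chi(\lambda)\ne 0$ it has at least one simple quotient, which is a simple quotient of $Z_\chi(\overline\lambda)$, hence isomorphic to $L_\chi(\overline\lambda)$ by uniqueness. Composing $L_p^\chi(\lambda) \twoheadrightarrow L_\chi(\overline\lambda)$ gives exactly the desired surjective homomorphism.

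The only genuinely delicate point — and the step I expect to be the main obstacle — is verifying cleanly that $v \ne 0$ in $L_p^\chi(\lambda)$ and that $v$ really is killed by $\n^+$ after all the base changes and the quotient by $J_\chi$; the nonvanishing is handled by the hypothesis $L_p^\chi(\lambda)\ne 0$ together with the fact that $L_p^\chi(\lambda)$ is cyclic on $v$ (if $v$ were zero the whole module would vanish), and the $\n^+$-annihilation is a matter of noting that the defining relations of the highest weight vector are integral (hold already over $R$) and therefore pass to $\bK$ and survive the further quotient. Everything else is a formal application of Frobenius reciprocity for $U_\chi(\g)$ over $U_\chi(\b)$ and the uniqueness of the simple quotient of a baby Verma module in standard Levi form. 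I would present the argument in roughly that order: (1) set up $v$ and its weight; (2) check $\overline\lambda\in\Lambda_\chi$ and $\n^+ v = 0$ and $v\ne 0$; (3) build the surjection from $Z_\chi(\overline\lambda)$; (4) use standard Levi form to pass to $L_\chi(\overline\lambda)$.
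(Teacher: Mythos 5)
Your argument is correct and is essentially the paper's proof: the surjection $Z_\chi(\overline{\lambda})\twoheadrightarrow L_p^\chi(\lambda)$ you build by Frobenius reciprocity from the highest weight generator is exactly what the paper obtains by composing the isomorphism $M_p^\chi(\lambda)\cong Z_\chi(\widetilde{\lambda})$ (Proposition~\ref{prop: M and Z iso}, itself proved via Frobenius reciprocity) with the canonical surjection $M_p^\chi(\lambda)\twoheadrightarrow L_p^\chi(\lambda)$. Both proofs then conclude identically from the uniqueness of the simple quotient of the baby Verma module in standard Levi form.
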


Our main result focuses exclusively on the case when $\g=\gl_N(\bK)$. In this case there is a classification of the minimal-dimensional $U_\chi(\g)$-modules due to \cite[Theorem 1.1]{GT}; furthermore, all nilpotent $\chi\in\gl_N(\bK)^{*}$ are (up to $G$-conjugacy) in standard Levi form. The main result of the second part of this paper is the following (Theorem~\ref{thm: summation}) -- we require certain assumptions in this result labelled (R1), (R2($\lambda$)) and (R3($\lambda$)), whose statements can be found in Subsections~\ref{ss: Prelim 2} and \ref{ss: Assoc Var}.

\begin{theorem}\label{thm: Intro summation}
Let $\underline{p}$ be a partition of $N$. There exist nilpotent $\chi\in\gl_N(\C)^{*}$ and $\overline{\chi}\in\gl_N(\bK)^{*}$, which coincide on $\gl_N(\Z)$ and correspond to the partition $\underline{p}$, such that the following is true:

For any minimal-dimensional $U_{\overline{\chi}}(\gl_N(\bK))$-module $L$ there exists $\lambda\in\t_\Q^{*}$ such that if $p$ is invertible in an $R$ satisfying (R1), (R2($\lambda$)) and (R3($\lambda$)) then:
\begin{enumerate}
	\item $\Ann_{U(\gl_N(\C))}(L_\C(\lambda))$ is completely prime,
	\item $\cVA'(\Ann_{U(\gl_N(\C))}(L_\C(\lambda)))=\overline{\GL_N(\C)\cdot \chi}$, and
	\item $L_p^{\overline{\chi}}(\lambda)\twoheadrightarrow L$.
\end{enumerate}

\end{theorem}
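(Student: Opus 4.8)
The plan is to bridge the classification of minimal-dimensional modules on the $\bK$-side (from \cite{GT}) with the description of completely prime primitive ideals of prescribed associated variety on the $\C$-side, and to reconcile the two via reduction modulo $p$. Fix the partition $\underline p = (p_1 \geq \cdots \geq p_r)$ of $N$ and take $\chi \in \gl_N(\C)^{*}$ and $\overline\chi \in \gl_N(\bK)^{*}$ to be the element in standard Levi form, with respect to the Levi subalgebra $\gl_{p_1} \times \cdots \times \gl_{p_r}$, whose restriction to each factor is a regular nilpotent; the explicit form of a standard Levi form element shows that one formula over $\Z$ produces both, so $\chi$ and $\overline\chi$ can be chosen to coincide on $\gl_N(\Z)$, and both correspond to $\underline p$. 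The key preliminary observation is that $L_p^{\overline\chi}(\lambda)$ is always a quotient of the baby Verma module $Z_{\overline\chi}(\lambda)$: indeed $L_p(\lambda)$ is a homomorphic image of the $\bK$-Verma module $U(\gl_N(\bK)) \otimes_{U(\b)} \bK_\lambda$ by base change from the definition of $L_p(\lambda)$ via $L_R(\lambda)$, and $Z_{\overline\chi}(\lambda)$ is the reduction of that $\bK$-Verma module modulo $J_{\overline\chi}$. Since $\overline\chi$ is in standard Levi form, $Z_{\overline\chi}(\lambda)$ has unique simple quotient $L_{\overline\chi}(\overline\lambda)$, so every simple quotient of $L_p^{\overline\chi}(\lambda)$ is isomorphic to $L_{\overline\chi}(\overline\lambda)$. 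Hence part (3) of the theorem is equivalent to producing $\lambda \in \t_\Q^{*}$ with $L \cong L_{\overline\chi}(\overline\lambda)$ and $L_p^{\overline\chi}(\lambda) \neq 0$, the latter then yielding (3) through Proposition~\ref{cor: Intro surj hom simple}.

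It therefore remains to choose $\lambda$. Since $U_{\overline\chi}(\gl_N(\bK))$ is finite-dimensional and $\overline\chi$ is nilpotent, every simple $U_{\overline\chi}(\gl_N(\bK))$-module is of the form $L_{\overline\chi}(\overline\nu)$ for some $\overline\nu \in \t^{*}_{\F_p} \subseteq \t^{*}_{\bK}$; the classification of \cite{GT}---which one can understand through parabolic induction from the Levi $\gl_{p_1}\times\cdots\times\gl_{p_r}$, where $\overline\chi$ is regular nilpotent and the minimal-dimensional modules are baby Verma modules---records precisely which $\overline\nu$ give minimal-dimensional $L$, together with combinatorial data attached to each. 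On the $\C$-side, the completely prime primitive ideals of $U(\gl_N(\C))$ with $\cVA'$ equal to $\overline{\GL_N(\C)\cdot\chi}$ are parametrised by the analogous data over $\Q$ (via the type-$A$ theory of primitive ideals, equivalently via the one-dimensional representations of the finite $W$-algebra attached to $\chi$, or as annihilators of suitable parabolically induced modules), and each is realised as $\Ann_{U(\gl_N(\C))}(L_\C(\lambda))$ for an explicit $\lambda \in \t_\Q^{*}$ whose relevant values reduce modulo $p$ to those of $\overline\nu$. Choosing $\lambda$ this way makes parts (1) and (2) follow from the definition of this parametrisation and Joseph's irreducibility theorem \cite{Jo81,Jo85}; the hypotheses (R1), (R2($\lambda$)) and (R3($\lambda$)) are exactly what guarantees that $R$ inverts no prime degenerating the combinatorics, that $L_R(\lambda)$ is a good $R$-lattice in $L_\C(\lambda)$ (so $\overline\lambda$ genuinely indexes $L$, giving $L\cong L_{\overline\chi}(\overline\lambda)$), and that the associated-variety computation survives base change.

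The main obstacle is the non-vanishing $L_p^{\overline\chi}(\lambda) \neq 0$, i.e.\ that the generator $\overline v_\lambda$ of $L_p(\lambda)$ does not lie in $J_{\overline\chi} L_p(\lambda)$. I would argue this as follows. By (R2($\lambda$)), $L_p(\lambda)$ has the same weight-space dimensions as $L_\C(\lambda)$, hence the same Gelfand--Kirillov dimension $\dim \GL_N(\C)\cdot\chi = \dim \GL_N(\bK)\cdot\overline\chi$; so if $L_p^{\overline\chi}(\lambda)$ is nonzero then, by the Kac--Weisfeiler bound (proved by Premet in \cite{PrKW}), it is minimal-dimensional with all composition factors minimal-dimensional, and---being a quotient of the baby Verma $Z_{\overline\chi}(\lambda)$ with unique simple quotient $L_{\overline\chi}(\overline\lambda) \cong L$---it must surject onto $L$. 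To rule out $L_p^{\overline\chi}(\lambda) = 0$ one shows that the set of $\psi \in \gl_N(\bK)^{*}$ with $L_p^{\psi}(\lambda) \neq 0$ equals $\overline{\GL_N(\bK)\cdot\overline\chi}$: it is a closed, conical, $\GL_N(\bK)$-stable subset whose dimension is controlled by the characteristic-zero associated variety computed in (2), and (R3($\lambda$)) makes the comparison exact, so this set contains the dense orbit, hence $\overline\chi$. Assembling everything, $L_p^{\overline\chi}(\lambda) \neq 0$, so Proposition~\ref{cor: Intro surj hom simple} gives $L_p^{\overline\chi}(\lambda) \onto L_{\overline\chi}(\overline\lambda) \cong L$, which is (3), while (1) and (2) hold by construction. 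I expect the genuinely delicate points to be pinning down the exact compatibility between the \cite{GT} combinatorics and the $\C$-side parametrisation of completely prime primitive ideals (which is why the hypotheses must be tailored to $\lambda$), and the lattice bookkeeping behind the non-vanishing.
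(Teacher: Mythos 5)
Your reduction of part (3) to the two claims ``$L\cong L_{\overline{\chi}}(\widetilde{\lambda})$'' and ``$L_p^{\overline{\chi}}(\lambda)\neq 0$'' is exactly the paper's Corollary~\ref{cor: surj hom simple}, and the choice of $\chi_\pi$ in standard Levi form over $\Z$ matches the paper. But the two substantive steps are where the proposal has genuine gaps. For the choice of $\lambda$, you simply assert that the completely prime primitive ideals with $\cVA'$ equal to $\overline{\GL_N(\C)\cdot\chi}$ are parametrised by data compatible with reduction modulo $p$ to the labels of \cite{GT}; that compatibility is precisely what must be constructed. The paper does it concretely: starting from the column-connected tableau $A\in\Tab_{\F_p}(\pi)$ labelling $L$, it produces a lift $\widehat{A}\in\Tab_\Z(\pi)$ which is \emph{simultaneously} column-connected and row-standard (by shifting whole columns by multiples of $p$), sets $\lambda=\lambda_{\widehat{A}}-\rho$, and then invokes Brundan's results \cite{Br11} (Robinson--Schensted applied to the semistandard $\widehat{A}$, together with \cite{BK}, \cite{PrEnv}, \cite{Lo1}) to obtain complete primality and $\cVA'(I(\lambda))=\overline{\O}_\pi$. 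The three simultaneous requirements on $\widehat{A}$ are the combinatorial heart of (1), (2) and of the later geometric step; nothing in your outline produces them or explains why a weight with the right annihilator can be chosen to reduce to the prescribed $\overline{\nu}$.

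The non-vanishing argument as proposed would fail outright. The locus $\{\psi\in\g_\bK^{*}\mid L_p^{\psi}(\lambda)\neq 0\}$ is the zero set $V_p(\lambda)$ of $\Ann_{Z_p(\g_\bK)}L_p(\lambda)$, and by Premet's results its irreducible components have dimension at most $\frac{1}{2}\dim\O_\pi$; it therefore cannot equal $\overline{\GL_N(\bK)\cdot\overline{\chi}}$, whose dimension is $\dim\O_\pi$ --- you are conflating the associated variety of the primitive ideal (an orbit closure) with the support/associated variety of the highest weight module (half-dimensional). Moreover $L_p(\lambda)$ is not a $\GL_N(\bK)$-equivariant module, so there is no reason for this locus to be $\GL_N(\bK)$-stable; the paper only proves $\widetilde{B}_\bK(\bK)$-stability of $\cV_{\sl_N(\bK)}L_p(\lambda)$ (Proposition~\ref{prop: B stab VL}), and even that requires a careful comodule/lattice argument over $R$. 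The actual route to $\overline{\chi}\in V_p(\lambda)$ is: prove $\chi_\pi\in\cV_{\sl_N(\C)}L_\C(\lambda)$ via the Joseph/Borho--Brylinski inclusion $\cV_{\sl_N(\C)}L_\C(\lambda)\supseteq\overline{\Ad(\widetilde{B})(\n_\C^{+}\cap ww_0(\n_\C^{+}))}$ together with the explicit permutation check that $e_\pi\in ww_0(\n_\C^{+})$ (this is exactly where row-standardness of $\widehat{A}$ enters); transfer to $\bK$ using (R2($\lambda$)) (Proposition~\ref{prop: R to K in VL}); combine $\widetilde{B}$-stability with the orbit dimension count $\dim(\widetilde{B}_\bK(\bK)\cdot\chi_\pi)=\frac{1}{2}\dim\O_\pi$ (Proposition~\ref{prop: dim B orb}) to place $\chi_\pi$ in a maximal-dimensional component of $\cV_{\sl_N(\bK)}L_p(\lambda)$; and finally apply Premet's comparison of maximal components of $\cV_{\sl_N(\bK)}L_p(\lambda)$ and $V_p(\lambda)$ (Proposition~\ref{prop: irred comp Vp(l)}, which is where (R3($\lambda$)) is needed) and \cite[Lemma 3.1]{Pr07}. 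None of these steps is present in, or recoverable from, the orbit-closure claim you propose.
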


This immediately yields the analogous result for $\g=\sl_N(\bK)$.

\begin{cor}\label{cor: Intro summation}
Let $\underline{p}$ be a partition of $N$. There exist nilpotent $\chi\in\sl_N(\C)^{*}$ and $\overline{\chi}\in\sl_N(\bK)^{*}$, which coincide on $\sl_N(\Z)$ and correspond to the partition $\underline{p}$, such that the following is true:

For any minimal-dimensional $U_{\overline{\chi}}(\sl_N(\bK))$-module $L$ there exists $\lambda\in\t_\Q^{*}$ such that if $p$ is invertible in an $R$ satisfying (R1), (R2($\lambda$)) and (R3($\lambda$)) then:
\begin{enumerate}
	\item $\Ann_{U(\sl_N(\C))}(L_\C(\lambda))$ is completely prime,
	\item $\cVA'(\Ann_{U(\sl_N(\C))}(L_\C(\lambda)))=\overline{\SL_N(\C)\cdot \chi}$, and
	\item $L_p^{\overline{\chi}}(\lambda)\twoheadrightarrow L$.
\end{enumerate}
\end{cor}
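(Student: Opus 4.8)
The plan is to deduce Corollary~\ref{cor: Intro summation} directly from Theorem~\ref{thm: Intro summation} by passing from $\gl_N$ to $\sl_N$ along the natural inclusion $\sl_N\hookrightarrow\gl_N$ (or, what amounts to the same thing for representation-theoretic purposes, along the surjection $\gl_N\onto\gl_N/\mathfrak{z}\cong\pgl_N$-considerations, but here we keep the Lie algebra $\sl_N$). The key point is that $\gl_N=\sl_N\oplus\kk\cdot\mathrm{Id}$ as Lie algebras (with $\mathrm{Id}$ central), that this decomposition is defined over $\Z$ away from the prime $p\mid N$ issues but in fact works over $R$ as an $R$-module direct sum regardless, and that a reduced enveloping algebra $U_\chi(\gl_N)$ factors accordingly: $U_\chi(\gl_N)\cong U_{\chi|_{\sl_N}}(\sl_N)\otimes_\kk U_\chi(\kk\cdot\mathrm{Id})$, the second tensor factor being a one-dimensional (split, since $\mathrm{Id}$ is central and semisimple with $\mathrm{Id}^{[p]}=\mathrm{Id}$ only after rescaling—so one must be mildly careful) algebra. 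Thus every simple, and more generally every module for $U_\chi(\gl_N)$, restricts to a $U_{\chi|_{\sl_N}}(\sl_N)$-module of the same dimension, and conversely every $U_{\bar\chi|_{\sl_N}}(\sl_N)$-module extends (non-uniquely, by a choice of scalar action of $\mathrm{Id}$) to a $U_{\bar\chi}(\gl_N)$-module. Since $\dim(\GL_N\cdot\chi)=\dim(\SL_N\cdot\chi)$ for nilpotent $\chi$ (the nilpotent cones and their orbit structures for $\gl_N$ and $\sl_N$ agree, both being parametrised by partitions $\underline{p}$ of $N$), the notion of minimal-dimensional module is identical on the two sides.

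Concretely, the steps I would carry out are as follows. First, fix the partition $\underline{p}$ of $N$ and apply Theorem~\ref{thm: Intro summation} to obtain nilpotent $\chi\in\gl_N(\C)^*$ and $\bar\chi\in\gl_N(\kk)^*$ agreeing on $\gl_N(\Z)$ and corresponding to $\underline{p}$; restrict these to $\sl_N$ to get the required $\chi\in\sl_N(\C)^*$, $\bar\chi\in\sl_N(\kk)^*$ (after possibly conjugating so that $\chi(\n^+)=0$ is preserved—this is automatic since the standard Levi representative lies in $\n^-$-type form and restriction commutes with this). Second, given a minimal-dimensional $U_{\bar\chi}(\sl_N(\kk))$-module $L$, extend it to a $U_{\bar\chi}(\gl_N(\kk))$-module $\widehat L$ by letting the central element $\mathrm{Id}$ act by a scalar $c\in\kk$ satisfying the $p$-character constraint $c^p-c=\bar\chi(\mathrm{Id})^p$ (such $c$ exists since $\kk$ is algebraically closed); this $\widehat L$ is again minimal-dimensional for $U_{\bar\chi}(\gl_N(\kk))$. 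Third, apply Theorem~\ref{thm: Intro summation} to $\widehat L$ to obtain $\lambda\in\t_\Q^*$ (the Cartan of $\gl_N$) with the three listed properties; restrict $\lambda$ to the Cartan $\t_\Q^*$ of $\sl_N$. Fourth, translate properties (1)–(3) from $\gl_N$ to $\sl_N$: completeness of primeness of $\mathrm{Ann}_{U(\gl_N(\C))}(L_\C(\lambda))$ transfers to $\mathrm{Ann}_{U(\sl_N(\C))}(L_\C(\lambda|_{\sl_N}))$ because $U(\gl_N(\C))\cong U(\sl_N(\C))\otimes\C[\mathrm{Id}]$ and tensoring by a polynomial ring preserves the domain property after quotienting by $(\mathrm{Id}-c)$; the associated variety statement transfers because $\cVA'$ is by definition computed inside $[\g_\C,\g_\C]^*$, which is the same space $\sl_N(\C)^*$ for both $\gl_N$ and $\sl_N$, and the orbit closure $\overline{\GL_N(\C)\cdot\chi}=\overline{\SL_N(\C)\cdot\chi}$; and the surjection $L_p^{\bar\chi}(\lambda)\onto\widehat L$ becomes $L_p^{\bar\chi}(\lambda|_{\sl_N})\onto L$ upon restriction, using that the base-change construction $L_\C(\lambda)\rightsquigarrow L_p(\lambda)\rightsquigarrow L_p^{\bar\chi}(\lambda)$ is compatible with restriction along $\sl_N\hookrightarrow\gl_N$ (indeed $L_\C(\lambda)$ as a $\gl_N(\C)$-module restricts to $L_\C(\lambda|_{\sl_N})$ as an $\sl_N(\C)$-module, its chosen highest-weight generator $\bar v_\lambda$ is unchanged, so $L_R$ and hence $L_p$ are unchanged as underlying spaces with the $\sl_N$-action).

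The main obstacle—and it is a mild bookkeeping one rather than a genuine mathematical difficulty—is making sure the conditions (R1), (R2($\lambda$)), (R3($\lambda$)) for $\sl_N$ are implied by (or can be arranged to coincide with) the corresponding conditions for $\gl_N$ at the weight $\lambda$ we produced; since those conditions are about the arithmetic of $R$ (invertibility of certain primes/denominators determined by $\lambda$ and $\underline{p}$) and the $\sl_N$-data is literally the restriction of the $\gl_N$-data, the same finite set of primes works, so one just records that $R$ may be taken identical on both sides. A secondary point to handle carefully is the extension of $L$ to $\widehat L$: one should note that the scalar $c$ by which $\mathrm{Id}$ acts is not canonical, but any valid choice yields a minimal-dimensional $\gl_N$-module and the conclusion for $L$ does not depend on $c$ because $c$ only shifts $\lambda$ by a multiple of the trace functional, which lies in the kernel of restriction to $\t_\Q^*(\sl_N)$. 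Apart from these checks the corollary is a formal consequence of the theorem, so the proof will be short.
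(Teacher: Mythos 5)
Your deduction is correct, and it is essentially the route the paper intends: the paper gives no separate proof of this corollary, treating it as an immediate consequence of Theorem~\ref{thm: Intro summation}, and the $\gl_N\leftrightarrow\sl_N$ transfer you spell out is exactly what the machinery of Subsection~\ref{ss: Rep Theory} provides. Two remarks by way of comparison. First, much of your bookkeeping is already in the paper: the lemma preceding Corollary~\ref{cor: gl v sl} shows that, when $\chi(y)=0$ for the identity matrix $y$ (as holds for $\chi_\pi$), the restriction of $L_p^{\chi}(\lambda)$ to $U_{\chi'}(\sl_N(\bK))$ is literally $L_p^{\chi'}(\lambda')$, which yields your item (3); similarly $L_\C(\lambda)$ restricts to $L_\C(\lambda')$, so complete primality transfers because $U(\sl_N(\C))/\Ann_{U(\sl_N(\C))}(L_\C(\lambda'))$ embeds in (indeed is isomorphic to) $U(\gl_N(\C))/\Ann_{U(\gl_N(\C))}(L_\C(\lambda))$. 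Moreover, your ``main obstacle'' about matching (R1), (R2($\lambda$)), (R3($\lambda$)) largely dissolves: in the body of the paper these assumptions are imposed for $\g_R=\sl_N(R)$ even in the proof of the $\gl_N$ theorem (see Corollary~\ref{cor: chi in Vp}), so the hypotheses appearing in the corollary are the ones actually used; alternatively, since $y$ lies in $J_R(\lambda)$ and $S(\gl_N(R))=S(\sl_N(R))[y]$ when $N$ is invertible in $R$, the two quotients $S(\g_R)/J_R(\lambda)$ agree, making the two sets of conditions equivalent. Second, two small factual slips in your set-up, neither load-bearing: $U_\chi(\bK\cdot y)$ is $p$-dimensional (isomorphic to $\bK^{\oplus p}$), not one-dimensional, and $y^{[p]}=y$ holds on the nose, with no rescaling needed; the statement you actually use -- that $y$ may act on $L$ by any scalar $c$ with $c^p-c=\overline{\chi}(y)^p=0$, producing a $U_{\overline{\chi}}(\gl_N(\bK))$-module $\widehat{L}$ of the same, hence minimal, dimension because $\dim(\GL_N(\bK)\cdot\overline{\chi})=\dim(\SL_N(\bK)\cdot\overline{\chi}')$ -- is correct.
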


We begin the paper in Section~\ref{s: Tensor Prods} by considering the question regarding tensor products. In Section~\ref{s: Min Dim'l Mods} we then explore the minimal-dimensional modules question. More detailed description of the layouts of these sections are given in the preamble to each.

\subsection*{Acknowledgements}
The author would like to thank Simon Goodwin for helpful discussions regarding this paper and Alexander Premet for pointing out an issue with a previous version of this paper. This author was supported during this research by a research fellowship from the Royal Commission for the Exhibition of 1851.

\section{Tensor products of baby Verma modules}\label{s: Tensor Prods}

In this section, we discuss the structure of the tensor product of two baby Verma modules. Subsection~\ref{ss: Prelims} begins by setting up some conventions and notation for this section. Subsection~\ref{ss: bbVma over n} then considers the structure of baby Verma modules over a particular subalgebra of $U_\chi(\g)$, which is then applied in Subsection~\ref{ss: Tens of bbVmas} to determine a filtration of the tensor product of baby Verma modules. Finally, we conclude in Subsection~\ref{ss: Graded} by considering the graded version of these results (in the sense of \cite{Janmodreps}). 

\subsection{Preliminaries}\label{ss: Prelims}

Let $G$ be a reductive algebraic group over an algebraically closed field $\bK$ of characteristic $p>0$, and let $\g$ be its Lie algebra. Assume that $G$ satisfies the standard hypotheses, i.e. that (A) the derived subgroup of $G$ is simply connected, (B) the prime $p$ is good for $G$, and (C) there exists a non-degenerate $G$-invariant symmetric bilinear form on $\g$. Let $T$ be a maximal torus of $G$ and let $B$ be a Borel subgroup of $G$ containing $T$, with corresponding Lie algebras $\t$ and $\b$. We set $X(T)$ to be the character group of $T$. Denote by $\Phi\subseteq X(T)$ the root system of $G$ corresponding to $T$, by $\Phi^{+}$ the set of positive roots corresponding to $B$, and by $\Pi$ the associated set of simple roots. Each root $\alpha\in\Phi$ is a homomorphism of algebraic groups $T\to \bK^{*}$, and it differentiates to a homomorphism of Lie algebras $\t\to\bK$ which we also denote by $\alpha$. For each root $\alpha\in\Phi$, we denote by $\g_\alpha$ the corresponding root space in $\g$. Define $\n^{+}=\bigoplus_{\alpha\in\Phi^{+}}\g_\alpha$ and $\n^{-}=\bigoplus_{\alpha\in\Phi^{+}}\g_{-\alpha}$. We thus have $\b=\t\oplus \n^{+}$ and $\g=\n^{-}\oplus \t\oplus \n^{+}$. For each $\alpha\in\Phi$ we fix a root vector $e_\alpha\in\g_\alpha$, and we fix a basis $h_1\ldots,h_d$ of $\t$; we assume that these are chosen in such a way as to satisfy the Chevalley basis relations (adapted to the reductive case). We also define $h_\alpha=[e_\alpha,e_{-\alpha}]\in\t$ for $\alpha\in\Phi$.

Enumerate $\Pi=\{\alpha_1,\ldots,\alpha_d\}$. Given $\gamma\in \Phi$, there exist $a_1,\ldots,a_d\in \Z$ (either all positive or all negative) such that $$\gamma=a_1\alpha_1 + \cdots + a_d\alpha_d.$$ We then set $$\hgt(\gamma)=a_1+\cdots + a_d;$$ clearly $\gamma\in\Phi^{+}$ if and only if $\hgt(\gamma)>0$. We label the positive roots $\Phi^{+}=\{\gamma_1,\ldots,\gamma_D\}$ such that $r<l$ implies $\hgt(\gamma_r)\leq\hgt(\gamma_l)$. Set $\rho=\frac{1}{2}\sum_{i=1}^{D}\gamma_i$; this naturally lies in $X(T)\otimes_{\Z}\bQ$ but under our assumptions it in fact lies in $X(T)$ itself.

Let $Y(T)$ be the cocharacter group of $T$ and let $\langle-,-\rangle:X(T)\times Y(T)\to\Z$ be the perfect pairing such that $(\lambda\circ \sigma)(t)=t^{\langle \lambda,\sigma\rangle}$ for all $\lambda\in X(T)$, $\sigma\in Y(T)$ and $t\in\bK^{*}$. The coroot of a root $\alpha\in\Phi$ is denoted $\alpha^\vee$ and we write $\Phi^{\vee}=\{ \alpha^\vee\mid \alpha\in\Phi\}\subseteq Y(T)$.  Given $\alpha\in\Phi$ we define $s_\alpha:X(T)\to X(T)$ by $s_\alpha(\lambda)=\lambda-\langle\lambda,\alpha^\vee\rangle \alpha$ and $t_\alpha:X(T)\to X(T)$ by $t_\alpha(\lambda)=\lambda + p\alpha$. The Weyl group $W$ is then defined to be the subgroup of $\End_\Z(X(T))$ generated by all $s_\alpha$ for $\alpha\in \Phi$ and the affine Weyl group $W_p$ is defined to be the subgroup of $\Aff_\Z(X(T))$ generated by the elements $s_\alpha$  and $t_\beta$ for $\alpha,\beta\in\Phi$. Both $W$ and $W_p$ act on $X(T)$ by construction; we also define a dot-action of $W$ and $W_p$ on $X(T)$ by $w\cdot\lambda=w(\lambda+\rho)-\rho$.

The Lie algebra $\g$ is equipped with a natural restricted structure $\g\to\g$ written as $x\mapsto x^{[p]}$. The universal enveloping algebra $U(\g)$ has a large central subalgebra generated by the elements $x^p-x^{[p]}$ for $x\in\g$, which is called the $p$-centre and is denoted $Z_p(\g)$. Given $\chi\in\g^{*}$, we set the {\bf reduced enveloping algebra} $U_\chi(\g)$ to be the central quotient $$U_\chi(\g)=\frac{U(\g)}{\langle x^p - x^{[p]} - \chi(x)^p \mid x\in\g\rangle}.$$ It is classical that every simple $\g$-module is a $U_\chi(\g)$-module for some $\chi\in\g^{*}$. The restricted structure on $\g$ restricts to restricted structures on $\b$, $\n^{-}$ and $\n^{+}$; we may therefore also define reduced enveloping algebras $U_\chi(\b)$, $U_\chi(\n^{-})$ and $U_\chi(\n^{+})$. Each of these is a subalgebra of $U_\chi(\g)$, and $U_\chi(\g)$ is free as a module over each of them by the PBW theorem. 

Denote $$\Lambda_\chi :=\{\lambda\in\t^{*}\mid \lambda(h)^p-\lambda(h^{[p]})=\chi(h)^p \mbox{ for all }h\in\t\}.$$ For any $\lambda\in \t^{*}$ we may define a one-dimensional $\b$-module $\bK_\lambda$ on which $\n^{+}$ acts as zero and $\t$ acts via $\lambda$. Assuming that $\chi(\n^{+})=0$, this $\b$-module extends to a $U_\chi(\b)$-module if and only if $\lambda\in\Lambda_\chi$. Given such $\lambda$, we may then define the {\bf baby Verma module} corresponding to $\lambda$ as $Z_\chi(\lambda):=U_\chi(\g)\otimes_{U_\chi(\b)} \bK_\lambda$. This is a $p^D$-dimensional $U_\chi(\g)$-module, and every simple $U_\chi(\g)$-module arises as a quotient of a baby Verma module. 

We often assume $\chi(\b)=0$, in which case $$\Lambda_\chi=\Lambda_0=X(T)/pX(T).$$ At times, we make the further assumption that $\chi$ has (weak) standard Levi form, i.e. that there exists $I\subseteq \Pi$ such that $\chi(\b)=0$ and $$\chi(e_{-\alpha})=\twopartdef{=0}{\alpha\in\Phi^{+}\setminus I,}{\neq 0}{\alpha\in I.}$$ (We omit the ``weak'' if $\chi(e_{-\alpha})=1$ for all $\alpha\in I$, though such property will not have a meaningful effect on the representation theory.) When $\chi$ has (weak) standard Levi form, each baby Verma module $Z_\chi(\lambda)$ has a unique simple quotient, which we denote by $L_\chi(\lambda)$. As already observed, each simple $U_\chi(\g)$-module is of the form $L_\chi(\lambda)$ for some $\lambda\in \Lambda_0$.

Define by $W_I$ the subgroup of $W$ generated by those $s_\alpha$ with $\alpha\in \Phi\cap\Z I$ and by $W_{I,p}$ the subgroup of $W_p$ generated by those $s_\alpha$ and $t_\beta$ with $\alpha,\beta\in\Phi\cap \Z I$.

\subsection{Baby Verma modules as $U_0(\n^{+})$-modules}\label{ss: bbVma over n}

Assume that $\chi(\n^{+})=0$. The baby Verma modules $Z_\chi(\lambda)$, for $\lambda\in \Lambda_\chi$, are $U_\chi(\g)$-modules and thus may be restricted to $U_0(\n^{+})$-modules. In this subsection, we explore the structure of baby Verma modules when viewed as $U_0(\n^{+})$-modules in this way.

Recall that $Z_\chi(\lambda)$ has a basis consisting of elements $$e_{-\gamma_D}^{a_D}\cdots e_{-\gamma_1}^{a_1}z_\lambda \qquad \mbox{for} \qquad 0\leq a_1,\ldots,a_D<p,$$ where $z_\lambda:=1\otimes 1\in Z_\chi(\lambda)$. We call this the ``monomial basis'' of $Z_\chi(\lambda)$, and we define $$\hgt(e_{-\gamma_D}^{a_D}\cdots e_{-\gamma_1}^{a_1}z_\lambda)=-\sum_{i=1}^{D} a_i\hgt(\gamma_i).$$ In particular, we have $\hgt(z_\lambda)=0$. For ease of notation, set $V=Z_\chi(\lambda)$. For $m\in\Z$, we define $$V_{\geq -m}=\bK\mbox{--span}\{ e_{-\gamma_D}^{a_D}\cdots e_{-\gamma_1}^{a_1}z_\lambda\mid \hgt(e_{-\gamma_D}^{a_D}\cdots  e_{-\gamma_1}^{a_1}z_\lambda)\geq -m  \}\subseteq V.$$

\begin{lemma}\label{lem: neg root Z}
Fix $m\in\N$. Let $\gamma_r\in\Phi^{+}$ and let $ e_{-\gamma_D}^{a_D}\cdots e_{-\gamma_1}^{a_1}z_\lambda\in V_{\geq -m}$. Then $$ e_{-\gamma_r}  e_{-\gamma_D}^{a_D}\cdots e_{-\gamma_1}^{a_1}z_\lambda\in V_{\geq -m -\tiny\hgt(\gamma_r)}. $$
\end{lemma}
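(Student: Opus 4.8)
The plan is to prove the lemma by induction on $m$, using the Chevalley commutation relations to push $e_{-\gamma_r}$ past the monomial $e_{-\gamma_D}^{a_D}\cdots e_{-\gamma_1}^{a_1}$ and then reduce any resulting exponents that have grown to $p$ or beyond by means of the relations $e_{-\gamma}^p = e_{-\gamma}^{[p]} + \chi(e_{-\gamma})^p$ that define $U_\chi(\g)$.

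First I would set up the induction. For $m$ small (say, so small that $V_{\geq -m}$ is spanned by $z_\lambda$ alone, or is trivial), the statement is immediate: $e_{-\gamma_r} z_\lambda$ is, up to reordering via the PBW/Chevalley relations, a sum of monomials each of height $-\hgt(\gamma_r)$ or higher (the commutators $[e_{-\gamma_r}, \text{(lower terms)}]$ can only raise the height). For the inductive step, given a monomial $e_{-\gamma_D}^{a_D}\cdots e_{-\gamma_1}^{a_1} z_\lambda$ of height $\geq -m$, I would commute $e_{-\gamma_r}$ rightward one factor at a time. Each time we swap $e_{-\gamma_r}$ past $e_{-\gamma_j}^{a_j}$, the correction terms are sums of monomials in which $e_{-\gamma_r}$ and $e_{-\gamma_j}$ are together replaced by a factor involving the root vector $e_{-\gamma_r - \gamma_j}$ (when $-\gamma_r-\gamma_j$ is a root) times a scalar, i.e. the net change in the "formal height contribution" is the same $-\hgt(\gamma_r)$, but the surviving tail is strictly shorter, hence has strictly larger height, so the inductive hypothesis applies to it. The term where no commutator is taken puts $e_{-\gamma_r}$ in its correct slot, increasing the exponent $a_r$ by one; if $a_r + 1 < p$ this is already a monomial of height $-m-\hgt(\gamma_r)$ and we are done, and if $a_r+1 = p$ we invoke $e_{-\gamma_r}^p = e_{-\gamma_r}^{[p]} + \chi(e_{-\gamma_r})^p$, where $e_{-\gamma_r}^{[p]} \in \g$ lies in a single root space (the $p$-operation sends $\g_{-\gamma_r}$ to $\g_{-p\gamma_r}$, which is zero unless $\g$ has such a root, impossible for $p$ large, so in practice $e_{-\gamma_r}^{[p]}=0$) and $\chi(e_{-\gamma_r})^p$ is a scalar; in either case the replacement only raises height.

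The key bookkeeping point, and the step I expect to be the main obstacle, is verifying that reordering an arbitrary product of the $e_{-\gamma_i}$'s into the fixed ordered ("monomial") form never \emph{decreases} the height below the value one would naively assign by summing $-\hgt(\gamma_i)$ over all factors with multiplicity. This is essentially the statement that the associated graded of $U_\chi(\n^-)$ (or of $U_\chi(\g)$) with respect to the height filtration is a quotient of a polynomial ring, so that commutators and $p$-th power reductions only move things \emph{up} the filtration; concretely one checks that $[e_{-\gamma_i}, e_{-\gamma_j}]$ is a combination of $e_{-(\gamma_i+\gamma_j)}$ and (for the reductive case) central/toral terms, all of which have height $\geq -\hgt(\gamma_i)-\hgt(\gamma_j)$, with equality only for the genuine root vector term, together with the analogous statement for $e_{-\gamma}^p$. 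I would phrase this as a single preparatory observation — that the height filtration $V_{\geq -m}$ is preserved under multiplication by $U_\chi(\n^-)$ in a way compatible with heights — and then the lemma is the case of multiplication by a single generator $e_{-\gamma_r}$.

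Finally I would note that the toral and $\n^+$ corrections that can appear when commuting (e.g. $[e_{-\gamma_r}, e_{\gamma_s}]$-type terms do not arise here since we only have negative root vectors, but $h_\alpha$-type terms could if one were less careful) do not occur when we work entirely within $U_\chi(\n^-)\,z_\lambda$; the only subtlety is that when $e_{-\gamma_r-\gamma_j}$ happens not to be a negative root vector in the chosen basis we must re-express it, but this again only involves relations that respect the height grading. With the preparatory observation in hand, the induction closes cleanly and gives $e_{-\gamma_r}\, e_{-\gamma_D}^{a_D}\cdots e_{-\gamma_1}^{a_1} z_\lambda \in V_{\geq -m-\hgt(\gamma_r)}$, as required.
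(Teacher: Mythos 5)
Your strategy ultimately works, but note two things: as literally set up in your first two paragraphs, the induction does not close, and the device that saves you is exactly the ``preparatory observation'' of your third paragraph (which is genuinely different from the paper's argument). Concretely: after swapping $e_{-\gamma_r}$ past one factor $e_{-\gamma_j}$ (with $j>r$), the non-commutator term is $e_{-\gamma_j}\bigl(e_{-\gamma_r}e_{-\gamma_j}^{a_j-1}\cdots e_{-\gamma_1}^{a_1}z_\lambda\bigr)$; the inner bracket is controlled by the hypothesis at the smaller level $m-\hgt(\gamma_j)$, but it is now a linear combination of monomials of height $\geq -m+\hgt(\gamma_j)-\hgt(\gamma_r)$, and applying $e_{-\gamma_j}$ to those is an instance of the lemma at level $m-\hgt(\gamma_j)+\hgt(\gamma_r)$, which equals $m$ whenever $\hgt(\gamma_j)=\hgt(\gamma_r)$ (e.g. $\gamma_r=\alpha_1$, $\gamma_j=\alpha_2$ in type $A_2$). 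So induction on $m$ alone is circular; this is precisely why the paper runs a second, reverse induction on the index $r$ inside the induction on $m$ (its Case 3 invokes the statement at level $m+1$ for the larger index $l$), staying entirely within the restricted monomial basis of $Z_\chi(\lambda)$.

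Your filtration observation, once made precise, bypasses all of this and is arguably cleaner than the paper's double induction, so I would make it the whole proof and drop the commuting induction. The height function makes $\n^{-}$ a graded Lie algebra, hence $U(\n^{-})$ is height-graded and straightening any word in the $e_{-\gamma_i}$ inside $U(\n^{-})$ is height-homogeneous; passing to $U_\chi(\g)$ only requires the reductions $e_{-\gamma}^{p}=e_{-\gamma}^{[p]}+\chi(e_{-\gamma})^{p}=\chi(e_{-\gamma})^{p}$, which strictly raise height (and note $e_{-\gamma}^{[p]}=0$ for \emph{every} $p$, since $p\gamma$ is never a root -- no largeness assumption is needed, contrary to your hedge). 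It follows that $V_{\geq -m}$ is exactly the image of $\bigl(\bigoplus_{k\leq m}U(\n^{-})_{-k}\bigr)z_\lambda$, where $U(\n^{-})_{-k}$ denotes the height $-k$ component, and the lemma is then immediate from $e_{-\gamma_r}\in U(\n^{-})_{-\hgt(\gamma_r)}$ and multiplicativity of the grading. One small correction: for two negative root vectors no toral terms can occur in $[e_{-\gamma_i},e_{-\gamma_j}]$ (that only happens for a pair $\pm\alpha$), as you yourself note at the end, so the parenthetical about ``central/toral terms'' should be removed.
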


\begin{proof}
We proceed by induction on $m$.

The base case of $m=0$ is straightforward: since $V_{\geq 0}=\bK z_\lambda$, the result follows from the definitional fact that $e_{-\gamma_r}v_\lambda\in V_{\geq -\tiny\hgt(\gamma_r)}$.

For the induction step, let us assume that the result holds for all $k\leq m$.  We prove the inductive step by reverse induction on $r$. The base case is $r=D$, in which case we have for $e_{-\gamma_D}^{a_D}\cdots  e_{-\gamma_1}^{a_1}z_\lambda\in V_{\geq -(m+1)}$ that $$e_{\gamma_r} e_{-\gamma_D}^{a_D}\cdots  e_{-\gamma_1}^{a_1}z_\lambda= \twopartdef{e_{-\gamma_D}^{a_D+1}\cdots  e_{-\gamma_1}^{a_1}z_\lambda}{a_D<p-1,}{\chi(e_{-\gamma_D})^p e_{-\gamma_{D-1}}^{a_{D-1}}\cdots  e_{-\gamma_1}^{a_1}z_\lambda}{a_D={p-1}.}$$ In the former of these cases, we have $\hgt(e_{-\gamma_D}^{a_D+1}\cdots  e_{-\gamma_1}^{a_1}z_\lambda)= \hgt(e_{-\gamma_D}^{a_D}\cdots e_{-\gamma_1}^{a_1}z_\lambda)-\hgt(\gamma_D)\geq -(m+1)-\hgt(\gamma_r)$. In the latter, either $\chi(e_{-\gamma_D})=0$ -- in which case the result is trivial -- or $\chi(e_{-\gamma_D})\neq 0$ and $\hgt(e_{-\gamma_{D-1}}^{a_{D-1}}\cdots e_{-\gamma_1}^{a_1}z_\lambda)= \hgt(e_{-\gamma_{D}}^{a_{D}}\cdots  e_{-\gamma_1}^{a_1}z_\lambda) + (p-1)\hgt(\gamma_D)\geq  -(m+1) + (p-1)\hgt(\gamma_D) \geq -(m+1) - \hgt(\gamma_r)$. Either way, the base case for the induction on $r$ thus holds.

For the inductive step, suppose that $e_{-\gamma_i}V_{\geq -(m+1)}\subseteq V_{-(m+1)-\tiny\hgt(\gamma_i)}$ for all $i> r$. Let $e_{-\gamma_D}^{a_D}\cdots  e_{-\gamma_1}^{a_1}z_\lambda\in V_{\geq -(m+1)}$;
i.e., let $\hgt(e_{-\gamma_D}^{a_D}\cdots e_{-\gamma_1}^{a_1}z_\lambda)\geq -(m+1)$. If $\hgt(e_{-\gamma_D}^{a_D}\cdots e_{-\gamma_1}^{a_1}z_\lambda)\geq -m$ then the result follows by the inductive assumption; we thus assume $\hgt(e_{-\gamma_D}^{a_D}\cdots e_{-\gamma_1}^{a_1}z_\lambda) = -(m+1)$.

We need to show $e_{-\gamma_{r}} e_{-\gamma_D}^{a_D}\cdots e_{-\gamma_1}^{a_1}z_\lambda\in V_{\geq -(m+1)-\tiny\hgt(\gamma_r)}$. Let $l$ be maximal such that $a_l\neq 0$, noting that $l\geq 1$.\footnote{This follows since $\hgt(e_{-\gamma_D}^{a_D}\cdots e_{-\gamma_1}^{a_1}z_\lambda)= -(m+1)<0$.} There are three cases to consider.

{\bf Case 1:} $r>l$.

We have $\hgt(e_{-\gamma_r}e_{-\gamma_l}^{a_l}\cdots  e_{-\gamma_1}^{a_1}z_\lambda)=-(m+1)-\hgt(\gamma_r)$ by definition. The result thus holds in this case.

{\bf Case 2:} $r=l$.

In this case we have $$e_{-\gamma_r}e_{-\gamma_D}^{a_D}\cdots  e_{-\gamma_1}^{a_1}z_\lambda=\twopartdef{e_{-\gamma_l}^{a_l+1}\cdots  e_{-\gamma_1}^{a_1}z_\lambda}{a_l<p-1,}{\chi(e_{-\gamma_l})^p e_{-\gamma_{l-1}}^{a_{l-1}}\cdots e_{-\gamma_1}^{a_1}z_\lambda}{a_{l}=p-1.}$$ As in the base case for the induction on $r$, we get $e_{-\gamma_l}e_{-\gamma_D}^{a_D}\cdots  e_{-\gamma_1}^{a_1}z_\lambda\in V_{\geq -(m+1)-\tiny\hgt(\gamma_l)}$.

{\bf Case 3:} $r<l$. We split this case into two further cases.

{\bf Case 3(a):} $\gamma_l+\gamma_r\notin \Phi^{+}$.

In this case we have $[e_{-\gamma_r},e_{-\gamma_l}]=0$ and thus $$ e_{-\gamma_r} e_{-\gamma_l}^{a_l}\cdots  e_{-\gamma_1}^{a_1}z_\lambda = e_{-\gamma_l} e_{-\gamma_r} e_{-\gamma_l}^{a_l-1}\cdots  e_{-\gamma_1}^{a_1}z_\lambda.$$ Note that $e_{-\gamma_l}^{a_l-1}\cdots  e_{-\gamma_1}^{a_1}z_\lambda\in V_{\geq -(m+1)+\tiny\hgt(\gamma_l)}$ and so $e_{-\gamma_r}e_{-\gamma_l}^{a_l-1}\cdots  e_{-\gamma_1}^{a_1}z_\lambda\in V_{\geq -(m+1)+\tiny\hgt(\gamma_l) - \tiny\hgt(\gamma_r)}$ by the inductive assumption for $m$. Since $r<l$ we have $-(m+1)+\hgt(\gamma_l) - \hgt(\gamma_r)\geq -(m+1)$, and thus $e_{-\gamma_l} e_{-\gamma_r} e_{-\gamma_l}^{a_l-1}\cdots  e_{-\gamma_1}^{a_1}z_\lambda\in V_{\geq -(m+1)+\tiny\hgt(\gamma_l)-\tiny\hgt(\gamma_r)-\tiny\hgt(\gamma_l)}= V_{-(m+1)-\tiny\hgt(\gamma_r)}$ by the inductive assumption on $r$, as required.

{\bf Case 3(b):} $\gamma_l+\gamma_r\in \Phi^{+}$.

In this case, there exists $C_{r,l}\in\bK$ such that $[e_{-\gamma_r},e_{-\gamma_l}]=C_{r,l} e_{-\gamma_l-\gamma_r}$. Then $$e_{-\gamma_r} e_{-\gamma_l}^{a_l}\cdots  e_{-\gamma_1}^{a_1}z_\lambda= e_{-\gamma_l}e_{-\gamma_r} e_{-\gamma_l}^{a_l-1}\cdots  e_{-\gamma_1}^{a_1}z_\lambda + C_{r,l}e_{-\gamma_l-\gamma_r} e_{-\gamma_l}^{a_l-1}\cdots  e_{-\gamma_1}^{a_1}z_\lambda.$$ The first part of this sum lies in $V_{\geq -(m+1)-\tiny\hgt(\gamma_r)}$ as in Case 3(a). For the second part, note that $\hgt(\gamma_l+\gamma_r)=\hgt(\gamma_l)+\hgt(\gamma_r)$ (so $\gamma_l+ \gamma_r = \gamma_{t}$ for some $t>l$); therefore, $\hgt(e_{-\gamma_l-\gamma_r} e_{-\gamma_l}^{a_l-1}\cdots e_{-\gamma_1}^{a_1}z_\lambda) = -\hgt(\gamma_l)-\hgt(\gamma_r) + \hgt(e_{-\gamma_l}^{a_l-1}\cdots  e_{-\gamma_1}^{a_1}z_\lambda)=-(m+1)-\hgt(\gamma_r)$, as required. 

The inductive step therefore holds for the induction on $r$, and thus also for the induction on $m$.

\end{proof}

The associative algebra $U_0(\n^{+})$ is a Hopf algebra; the augmentation ideal of $U_0(\n^{+})$ is then defined to be the kernel of the counit $\varepsilon$. We denote it by $I^+$; note that it is precisely the ideal generated by the elements $e_{\alpha}$ for $\alpha\in\Phi^{+}$.

\begin{prop}\label{prop: pos root Z}
Let $m\in\N$. Then $V_{\geq -m}$ is a $U_0(\n^{+})$-module and the $U_0(\n^{+})$-action on $V_{\geq -m}/V_{\geq -m+1}$ is trivial. In other words, $I^+ V_{\geq - m}\subseteq V_{\geq -m+1}$.
\end{prop}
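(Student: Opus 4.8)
The plan is to deduce everything from one uniform statement: that each positive root vector raises height by at least one, in the sense that $e_\alpha v\in V_{\geq\hgt(v)+1}$ for every monomial basis vector $v$ of $V$ and every $\alpha\in\Phi^{+}$. First I would record why this suffices. Since $V_{\geq -m}$ is by definition the $\bK$-span of the monomial basis vectors it contains, the inclusion $e_\alpha v\in V_{\geq\hgt(v)+1}$ gives $e_\alpha V_{\geq -k}\subseteq V_{\geq -k+1}$ for all $k$ and all $\alpha\in\Phi^{+}$; as $V_{\geq -k+1}\subseteq V_{\geq -k}$, this already shows that each $V_{\geq -k}$ is stable under the algebra generators $e_\alpha$ of $U_0(\n^{+})$, hence is a $U_0(\n^{+})$-submodule. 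Moreover $I^{+}$ is spanned by the PBW monomials $e_{\gamma_D}^{b_D}\cdots e_{\gamma_1}^{b_1}$ of positive total degree, each of which can be written $u'e_\alpha$ with $u'\in U_0(\n^{+})$ and $\alpha\in\Phi^{+}$; hence $I^{+}V_{\geq -m}\subseteq\sum_\alpha U_0(\n^{+})e_\alpha V_{\geq -m}\subseteq\sum_\alpha U_0(\n^{+})V_{\geq -m+1}=V_{\geq -m+1}$, using that $V_{\geq -m+1}$ is a submodule.

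It then remains to prove $e_\alpha v\in V_{\geq\hgt(v)+1}$, and I would argue by induction on the depth $-\hgt(v)\in\N$. When $-\hgt(v)=0$ we have $v=z_\lambda$ and $e_\alpha z_\lambda=0$, because $\n^{+}$ acts as zero on $\bK_\lambda$, so there is nothing to prove. For the inductive step, write $v=e_{-\gamma_D}^{a_D}\cdots e_{-\gamma_1}^{a_1}z_\lambda$, let $l$ be maximal with $a_l\neq 0$, and factor $v=e_{-\gamma_l}v'$ where $v'=e_{-\gamma_l}^{a_l-1}e_{-\gamma_{l-1}}^{a_{l-1}}\cdots e_{-\gamma_1}^{a_1}z_\lambda$ is again a monomial basis vector with $\hgt(v')=\hgt(v)+\hgt(\gamma_l)$; since $\hgt(\gamma_l)\geq 1$, the depth of $v'$ is strictly smaller than that of $v$, so the inductive hypothesis applies to $v'$. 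Moving $e_\alpha$ to the right of $e_{-\gamma_l}$ gives $e_\alpha v=e_{-\gamma_l}(e_\alpha v')+[e_\alpha,e_{-\gamma_l}]v'$, and I would bound the two summands separately.

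For $e_{-\gamma_l}(e_\alpha v')$: the inductive hypothesis gives $e_\alpha v'\in V_{\geq\hgt(v')+1}$, and applying Lemma~\ref{lem: neg root Z} to each monomial basis vector occurring there yields $e_{-\gamma_l}(e_\alpha v')\in V_{\geq\hgt(v')+1-\hgt(\gamma_l)}=V_{\geq\hgt(v)+1}$. For $[e_\alpha,e_{-\gamma_l}]v'$ I would use the Chevalley relations, which make $[e_\alpha,e_{-\gamma_l}]$ equal to $h_{\gamma_l}\in\t$ if $\alpha=\gamma_l$, equal to $0$ if $\alpha\neq\gamma_l$ and $\alpha-\gamma_l\notin\Phi$, and equal to a scalar multiple of $e_{\alpha-\gamma_l}$ if $\alpha-\gamma_l\in\Phi$. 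In the first case $h_{\gamma_l}v'$ is a scalar multiple of $v'$ (the monomial basis vectors are $\t$-weight vectors) and $v'\in V_{\geq\hgt(v')}\subseteq V_{\geq\hgt(v)+1}$; the second case is trivial; if $\alpha-\gamma_l\in\Phi^{+}$ then by induction $e_{\alpha-\gamma_l}v'\in V_{\geq\hgt(v')+1}\subseteq V_{\geq\hgt(v)+1}$; and if $\alpha-\gamma_l\in\Phi^{-}$, so that $\gamma_l-\alpha\in\Phi^{+}$, Lemma~\ref{lem: neg root Z} gives $e_{-(\gamma_l-\alpha)}v'\in V_{\geq\hgt(v')-\hgt(\gamma_l-\alpha)}=V_{\geq\hgt(v)+\hgt(\alpha)}\subseteq V_{\geq\hgt(v)+1}$. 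In every case $e_\alpha v\in V_{\geq\hgt(v)+1}$, completing the induction.

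The argument is essentially bookkeeping once Lemma~\ref{lem: neg root Z} is available, and I do not anticipate a genuine conceptual obstacle — that lemma does the heavy lifting for the $e_{-\gamma_l}$ factors, and the only new input is the commutation identity $e_\alpha v=e_{-\gamma_l}(e_\alpha v')+[e_\alpha,e_{-\gamma_l}]v'$ together with a finite case check on the bracket. The points that need care are: ensuring the induction is on a strictly decreasing quantity (the depth of $v'$ drops precisely because every positive root has height at least one); the additivity $\hgt(\gamma_l-\alpha)=\hgt(\gamma_l)-\hgt(\alpha)$ used in the last subcase; and the standard fact that the monomial basis vectors are $\t$-weight vectors, needed for the $\alpha=\gamma_l$ case.
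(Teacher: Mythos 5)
Your proof is correct and follows essentially the same route as the paper: factor $v=e_{-\gamma_l}v'$ with $l$ maximal, commute the positive root vector past $e_{-\gamma_l}$, and control both resulting terms with Lemma~\ref{lem: neg root Z} plus a case check on the bracket. The only difference is organizational: the paper first reduces to the simple root vectors $e_{\alpha_i}$ (which generate $I^{+}$ under the standing hypotheses) and inducts on the filtration level $m$, so the case $\alpha-\gamma_l\in\Phi^{+}$ never occurs, whereas your statement is quantified over all $\alpha\in\Phi^{+}$ and inducts on the depth of the monomial, absorbing that extra case into the same induction and avoiding the need to know that $U_0(\n^{+})$ is generated by the simple root vectors.
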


\begin{proof}
Under our assumptions, $I^+$ is generated by the root vectors $e_{\alpha_i}$ for $i=1,\ldots,d$. It is therefore sufficient to show that $e_{\alpha_i}V_{\geq - m}\subseteq V_{\geq -m+1}$ for all $i=1,\ldots,d$.

We proceed by induction on $m$. For the base case of $m=0$, we note that $V_{\geq 0}=\bK z_\lambda$. This means in particular that $e_{\alpha_i} z_\lambda=0$ for all $i=1,\ldots,d$, and thus we have $e_{\alpha_i} V_{\geq 0} \subseteq V_{\geq 1}=\{0\}$.

For the inductive step, suppose that the result holds for all $k\leq m$. We must prove that $e_{\alpha_i} e_{-\gamma_D}^{a_D}\cdots e_{-\gamma_1}^{a_1}z_\lambda \in V_{\geq -m}$ whenever $\hgt(e_{-\gamma_D}^{a_D}\cdots  e_{-\gamma_1}^{a_1}z_\lambda)\geq -(m+1)$. The result follows from the inductive assumption if $\hgt(e_{-\gamma_D}^{a_D}\cdots  e_{-\gamma_1}^{a_1}z_\lambda)\geq -m$; we therefore assume that $\hgt(e_{-\gamma_D}^{a_D}\cdots  e_{-\gamma_1}^{a_1}z_\lambda) = -(m+1)$.

Let $l$ be maximal such that $a_l\neq 0$ (so $l\geq 1$). There are three cases to consider.

{\bf Case 1:} $\gamma_l - \alpha_i \notin \Phi^{+}\cup\{0\}$.

In this case we have $[e_{\alpha_i},e_{-\gamma_l}]=0$, and so $$ e_{\alpha_i} e_{-\gamma_l}^{a_l}\cdots  e_{-\gamma_1}^{a_1}z_\lambda = e_{-\gamma_l}e_{\alpha_i} e_{-\gamma_l}^{a_l-1}\cdots e_{-\gamma_1}^{a_1}z_\lambda.$$ Since $\hgt(e_{-\gamma_l}^{a_l-1}\cdots  e_{-\gamma_1}^{a_1}z_\lambda)=-(m+1)+\hgt(\gamma_l)\geq -m$, by induction we have that $$e_{\alpha_i}e_{-\gamma_l}^{a_l-1}\cdots  e_{-\gamma_1}^{a_1}z_\lambda\in V_{\geq -(m+1)+\tiny\hgt(\gamma_l)+\tiny\hgt(\alpha_i)}=V_{\geq -m+\tiny\hgt(\gamma_l)}.$$ By Lemma~\ref{lem: neg root Z} we thus have $e_{-\gamma_l}e_{\alpha_i} e_{-\gamma_l}^{a_l-1}\cdots  e_{-\gamma_1}^{a_1}z_\lambda\in V_{\geq -m+\tiny\hgt(\gamma_l)-\tiny\hgt(\gamma_l)}=V_{\geq -m},$ as required.

{\bf Case 2:} $\gamma_l = \alpha_i$. 

In this setting, we have $[e_{\alpha_i},e_{-\gamma_l}]=h_{\alpha_i}=: h_i$, and therefore 
\begin{equation*}
	\begin{split}
		e_{\alpha_i} e_{-\gamma_l}^{a_l}\cdots  e_{-\gamma_1}^{a_1}z_\lambda  = &  e_{-\gamma_l}e_{\alpha_i} e_{-\gamma_l}^{a_l-1}\cdots e_{-\gamma_1}^{a_1}z_\lambda +  h_i e_{-\gamma_l}^{a_l-1}\cdots  e_{-\gamma_1}^{a_1}z_\lambda \\
		= & e_{-\gamma_l}e_{\alpha_i} e_{-\gamma_l}^{a_l-1}\cdots e_{-\gamma_1}^{a_1}z_\lambda \\ & +  (\lambda-(a_l-1)\gamma_l - \cdots - a_1\gamma_1)(h_i) e_{-\gamma_l}^{a_l-1}\cdots  e_{-\gamma_1}^{a_1}z_\lambda.
	\end{split}
\end{equation*}
The former summand lies in $V_{\geq -m}$ as in Case 1, while the latter summand is either zero or has height $\hgt(e_{-\gamma_l}^{a_l}\cdots e_{-\gamma_2}^{a_2} e_{-\gamma_1}^{a_1}z_\lambda)+\hgt(\gamma_l)= -(m+1)+\hgt(\gamma_l)\geq -m$ as required.

{\bf Case 3:} $\gamma_l - \alpha_i \in \Phi^{+}$.

In this case, there exists $C_{l,i}\in\bK$ such that $[e_{\alpha_i},e_{-\gamma_l}]= C_{l,i} e_{-\gamma_l+\alpha_i}$. We then have $$ e_{\alpha_i} e_{-\gamma_l}^{a_l}\cdots e_{-\gamma_1}^{a_1}z_\lambda = e_{-\gamma_l}e_{\alpha_i} e_{-\gamma_l}^{a_l-1}\cdots  e_{-\gamma_1}^{a_1}z_\lambda + C_{l,i}  e_{-\gamma_l+\alpha_i}e_{-\gamma_l}^{a_l-1}\cdots  e_{-\gamma_1}^{a_1}z_\lambda.$$ As in Case 1, the former of the terms in this sum lies in $V_{\geq -m}$. For the latter, note that $\hgt(e_{-\gamma_l}^{a_l-1}\cdots  e_{-\gamma_1}^{a_1}z_\lambda)=-(m+1)+\hgt(\gamma_l)$; by Lemma~\ref{lem: neg root Z} we therefore have $$e_{-\gamma_l+\alpha_i}e_{-\gamma_l}^{a_l-1}\cdots e_{-\gamma_1}^{a_1}z_\lambda\in V_{\geq -(m+1)+\tiny\hgt(\gamma_l) -\tiny\hgt(\gamma_l-\alpha_i)}=V_{\geq -m}.$$ This proves the induction step and thus the result.
\end{proof}

\begin{cor}\label{cor: filt}
As a $U_0(\n^{+})$-module, $Z_\chi(\lambda)$ has a filtration $$0=U_0\subseteq U_1\subseteq U_2\subseteq \cdots \subseteq  U_{p^{\dim\n^{\tiny +}}}=Z_\chi(\lambda)$$ such that for each $i=1,\ldots,p^{\dim\n^{+}}$ there exists $e_{-\gamma_D}^{a_D}\cdots e_{-\gamma_1}^{a_1}z_\lambda\in U_i \subseteq Z_\chi(\lambda)$  such that $$U_i/U_{i-1}=\bK\mbox{-span}\{ e_{-\gamma_D}^{a_D}\cdots e_{-\gamma_1}^{a_1}z_\lambda + U_{i-1} \}.$$ Furthermore, $\n^{+}$ acts trivially on each $U_i/U_{i-1}$.
\end{cor}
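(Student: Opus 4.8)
The plan is to refine the height filtration $(V_{\geq -m})_{m}$ from Proposition~\ref{prop: pos root Z} into a filtration by $U_0(\n^{+})$-submodules whose successive quotients are one-dimensional and spanned by monomial basis vectors. First I would enumerate the monomial basis of $V=Z_\chi(\lambda)$ as $w_1,w_2,\ldots,w_{p^{D}}$ (note $p^{D}=p^{\dim\n^{+}}=\dim V$) in such a way that $\hgt(w_1)\geq\hgt(w_2)\geq\cdots\geq\hgt(w_{p^{D}})$; in particular $w_1=z_\lambda$. Then I set $U_0=0$ and $U_i=\bK\mbox{-span}\{w_1,\ldots,w_i\}$, so that $U_i/U_{i-1}$ is one-dimensional, spanned by the image of $w_i=e_{-\gamma_D}^{a_D}\cdots e_{-\gamma_1}^{a_1}z_\lambda$ for the appropriate tuple.

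The key point is that each $U_i$ is a $U_0(\n^{+})$-submodule. Write $\hgt(w_i)=-m$. Because the enumeration is by non-increasing height, $U_i$ contains every monomial basis element of height $\geq -m+1$, i.e.\ $V_{\geq -m+1}\subseteq U_i$, while on the other hand $U_i\subseteq V_{\geq -m}$ since every $w_j$ with $j\leq i$ has height $\geq -m$. Now $U_0(\n^{+})=\bK\cdot 1\oplus I^{+}$ as a vector space, so it suffices to check $I^{+}U_i\subseteq U_i$; but for $j\leq i$ we have $w_j\in V_{\geq -m}$, hence $I^{+}w_j\subseteq I^{+}V_{\geq -m}\subseteq V_{\geq -m+1}\subseteq U_i$ by Proposition~\ref{prop: pos root Z}. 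Thus $I^{+}U_i\subseteq V_{\geq -m+1}\subseteq U_i$, and $U_i$ is a $U_0(\n^{+})$-submodule.

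It then remains to see that $\n^{+}$ acts trivially on $U_i/U_{i-1}$. Since $w_i$ has height $-m$ and the enumeration is by non-increasing height, every monomial basis element of height $>-m$ occurs among $w_1,\ldots,w_{i-1}$, so $V_{\geq -m+1}\subseteq U_{i-1}$. Therefore $I^{+}w_i\subseteq I^{+}V_{\geq -m}\subseteq V_{\geq -m+1}\subseteq U_{i-1}$, which says exactly that $\n^{+}$ (being contained in $I^{+}$) annihilates the class of $w_i$ in $U_i/U_{i-1}$. Running $i$ from $1$ to $p^{\dim\n^{+}}$ gives the asserted filtration, with the designated basis element of $U_i/U_{i-1}$ being $w_i$.

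I do not expect a serious obstacle: all the analytic content is already packaged in Proposition~\ref{prop: pos root Z}, and what remains is the bookkeeping of choosing a total order refining the partial order by height and verifying that the "partial last layer" $U_i$ at each stage is still stable under an action that is trivial on the height-graded pieces. The only mild subtlety is ensuring the chain of inclusions $V_{\geq -m+1}\subseteq U_{i-1}\subseteq U_i\subseteq V_{\geq -m}$ is correctly arranged when several monomials share the same height, which the non-increasing enumeration handles automatically.
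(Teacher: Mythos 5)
Your proposal is correct and follows the same route as the paper: both deduce the result by refining the height filtration $(V_{\geq -m})_m$ of Proposition~\ref{prop: pos root Z}, using that $I^{+}V_{\geq -m}\subseteq V_{\geq -m+1}$ forces every intermediate span of monomial basis vectors (ordered by non-increasing height) to be a $U_0(\n^{+})$-submodule with $\n^{+}$ acting trivially on the one-dimensional quotients. Your write-up merely makes explicit the refinement step that the paper states more briefly.
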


\begin{proof}
By Proposition~\ref{prop: pos root Z}, $Z_\chi(\lambda)$ has a $U_0(\n^{+})$-module filtration 
\begin{equation}\label{eq: hgt filt}
	0=V_{\geq 1} \subseteq V_{\geq 0}\subseteq V_{\geq -1}\subseteq \cdots \subseteq V_{\geq -(p-1)\tiny \hgt(\rho)}=Z_\chi(\lambda)
\end{equation}
such that $U_0(\n^{+})$ acts trivially on each quotient $V_{\geq -m}/V_{\geq -m+1}$. By construction, each $V_{\geq -m}$ has a basis consisting of elements of the form $e_{-\gamma_D}^{a_D}\cdots e_{-\gamma_1}^{a_1}z_\lambda$, and therefore each $V_{\geq -m}/V_{\geq -m+1}$ has a basis consisting of elements of the form $e_{-\gamma_D}^{a_D}\cdots e_{-\gamma_1}^{a_1}z_\lambda + V_{\geq -m+1}$. Since $U_0(\n^{+})$ acts trivially on each $V_{\geq -m}/V_{\geq -m+1}$, the filtration (\ref{eq: hgt filt}) can be refined to a filtration with the properties described in the statement of the corollary.
\end{proof}

\subsection{Tensor products}\label{ss: Tens of bbVmas}

Since $U(\g)$ is a Hopf algebra, the tensor product $M\otimes N$ of two $U(\g)$-modules $M$ and $N$ can be given the structure of a $U(\g)$-module. Specifically, given $x\in\g$, $m\in M$ and $n\in N$, we have $$x\cdot(m\otimes n)= (x\cdot m)\otimes n + m\otimes (x\cdot n).$$ It is well-known (and straightforward to check) that when $M$ is a $U_\chi(\g)$-module and $N$ is a $U_{\chi'}(\g)$-module (for $\chi,\chi'\in\g^{*}$) the resulting tensor product $M\otimes N$ is in fact a $U_{\chi+\chi'}(g)$-module.

Fixing $\chi,\chi'\in\g^{*}$ with $\chi(\n^{+})=\chi'(\n^{+})=0$, and letting $\lambda\in\Lambda_\chi$ and $\mu\in \Lambda_{\chi'}$, we may therefore form the $U_{\chi+\chi'}(\g)$-module $Z_\chi(\lambda)\otimes Z_{\chi'}(\mu)$. Since we are now considering two baby Verma modules, we modify our notation from above a little bit in order to differentiate them. Specifically, we still set $z_\lambda$ to be the generator $1\otimes 1$ of $Z_\chi(\lambda)$, but we now denote by $u_\mu$ the generator $ 1\otimes 1$ of $Z_{\chi'}(\mu)$. The module $Z_\chi(\lambda)\otimes Z_{\chi'}(\mu)$ then naturally has a basis consisting of the elements 
\begin{equation}\label{e: ZoZ basis 1}
(e_{-\gamma_D}^{a_D}\cdots  e_{-\gamma_1}^{a_1}z_\lambda)\otimes (e_{-\gamma_D}^{b_D}\cdots  e_{-\gamma_1}^{b_1}u_\mu)\quad \mbox{for} \quad 0\leq a_1,\ldots,a_D,b_1,\ldots,b_D<p.
\end{equation}
It will be useful, however, to work with a slightly different basis of $Z_\chi(\lambda)\otimes Z_{\chi'}(\mu)$.

\begin{lemma}\label{lem: ZoZ basis}
The $U_{\chi+\chi'}(\g)$-module $Z_\chi(\lambda)\otimes Z_{\chi'}(\mu)$ has a basis consisting of the elements \begin{equation}\label{e: basis} e_{-\gamma_D}^{c_D}\cdots  e_{-\gamma_1}^{c_1}(z_\lambda\otimes e_{-\gamma_D}^{d_D}\cdots e_{-\gamma_1}^{d_1}u_\mu)\quad \mbox{for}\quad 0\leq c_1,\ldots,c_D,d_1,\ldots,d_D<p.
\end{equation}
\end{lemma}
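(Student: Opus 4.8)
The plan is to show that the transition between the two bases is governed by an upper-triangular matrix with respect to a suitable ordering, so that the claimed set of $p^{2D}$ elements must also be a basis. First I would set up the ordering: filter $Z_\chi(\lambda)$ by the submodules $V_{\geq -m}$ of Subsection~\ref{ss: bbVma over n}, and recall from Corollary~\ref{cor: filt} that $U_0(\n^+)$ acts trivially on the associated graded. The key computation is to expand a basis element of the form \eqref{e: basis}, namely $e_{-\gamma_D}^{c_D}\cdots e_{-\gamma_1}^{c_1}(z_\lambda\otimes e_{-\gamma_D}^{d_D}\cdots e_{-\gamma_1}^{d_1}u_\mu)$, using the Hopf-algebra (Leibniz) rule for the $\g$-action on a tensor product: each $e_{-\gamma_j}$ acts as $e_{-\gamma_j}\otimes 1 + 1\otimes e_{-\gamma_j}$. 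Distributing, the ``leading term'' (the one where every $e_{-\gamma_j}$ hits the left factor) is exactly $(e_{-\gamma_D}^{c_D}\cdots e_{-\gamma_1}^{c_1}z_\lambda)\otimes(e_{-\gamma_D}^{d_D}\cdots e_{-\gamma_1}^{d_1}u_\mu)$, a basis element of the form \eqref{e: ZoZ basis 1} with $(a_i,b_i)=(c_i,d_i)$. Every other term in the expansion has at least one $e_{-\gamma_j}$ acting on the right factor, and — after re-expressing in the monomial basis of $Z_\chi(\lambda)\otimes Z_{\chi'}(\mu)$ using Lemma~\ref{lem: neg root Z}, Proposition~\ref{prop: pos root Z}, and the commutation relations — one checks that such terms, measured by the total height $\sum a_i\hgt(\gamma_i)$ of the left tensor factor, are strictly "less negative" in the left factor while the total height on both factors together is preserved. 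Concretely: moving an $e_{-\gamma_j}$ onto the right factor raises the left-factor height by $\hgt(\gamma_j)>0$, and the subsequent reordering/straightening on the right (via the relations in $U_{\chi'}(\n^-)$, which can only produce lower-height monomials or $\chi'$-correction terms) together with the fact that positive root vectors raise height and act trivially on graded pieces, never decreases it again past the leading value.

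Making this precise, I would introduce the bigrading by $(\text{height of left factor}, \text{height of right factor})$ on the monomial basis \eqref{e: ZoZ basis 1}, or really just order the basis \eqref{e: ZoZ basis 1} by left-factor height (refined arbitrarily to a total order). The claim becomes: the element \eqref{e: basis} indexed by $(c_\bullet,d_\bullet)$ equals the monomial-basis element \eqref{e: ZoZ basis 1} indexed by $(a_\bullet,b_\bullet)=(c_\bullet,d_\bullet)$ plus a $\bK$-linear combination of monomial-basis elements whose left-factor height is strictly greater than $-\sum c_i\hgt(\gamma_i)$. Since the two index sets both have size $p^{2D}$ and the "change of basis matrix" is unitriangular with respect to this partial order (the diagonal entries are $1$, by the leading-term identification), it is invertible, hence \eqref{e: basis} is indeed a basis. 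The bookkeeping that the non-leading terms genuinely land in strictly-higher-left-height pieces is where I expect to use Lemma~\ref{lem: neg root Z} and Proposition~\ref{prop: pos root Z} most heavily: when an $e_{-\gamma_j}$ is applied to the left factor $z_\lambda\otimes(\cdots u_\mu)$ after some of the $e_{-\gamma_k}$'s have already been distributed to the right, I need to know that further left-factor action only stays in the filtration level dictated by the height, and that positive-root corrections arising from straightening commute through without lowering height on the left.

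I would organize the write-up as follows: (1) record the Leibniz expansion of $e_{-\gamma_j}\cdot(v\otimes w)$ and iterate to expand \eqref{e: basis}; (2) isolate the leading term and identify it with \eqref{e: ZoZ basis 1} for $(a,b)=(c,d)$; (3) show every other term, rewritten in the monomial basis, lies in $\bigoplus$ of monomial basis vectors with strictly larger left-factor height — here cite Lemma~\ref{lem: neg root Z} (negative root vectors respect the height filtration on $Z_\chi(\lambda)$) and Proposition~\ref{prop: pos root Z} (positive root vectors act trivially on graded pieces, hence only increase height), plus the commutation relations among the $e_{-\gamma_i}$; (4) conclude unitriangularity and hence that \eqref{e: basis} is a basis by a dimension/linear-independence count, noting $\dim Z_\chi(\lambda)\otimes Z_{\chi'}(\mu)=p^{2D}$.

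The main obstacle will be step (3): carefully controlling the many cross-terms produced by distributing the $c_i$ copies of each $e_{-\gamma_i}$ across the tensor factors and then straightening each resulting expression into the monomial basis \eqref{e: ZoZ basis 1}, while verifying that the height of the left tensor component never drops back to (or below) the leading value $-\sum c_i\hgt(\gamma_i)$. I anticipate handling this by an induction on $\sum c_i$ (the total number of negative root vectors on the outside), peeling off one $e_{-\gamma_j}$ at a time: applying it to a term of the inductive expansion either keeps it on the left (height of left factor drops by exactly $\hgt(\gamma_j)$, matching the leading term) or moves it to the right and, via Lemma~\ref{lem: neg root Z} together with the commutator corrections landing in higher-height pieces, produces only terms strictly above the target left-height — so the unitriangular structure is preserved at each stage.
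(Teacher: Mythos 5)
Your argument is correct, but it is organised in the opposite direction to the paper's. The paper proves spanning: since $\dim(Z_\chi(\lambda)\otimes Z_{\chi'}(\mu))=p^{2D}$, it suffices to write each monomial basis element \eqref{e: ZoZ basis 1} as a combination of elements \eqref{e: basis}, which it does by induction on the height of the left tensor factor using the identity $(e_{-\gamma_l}v)\otimes w = e_{-\gamma_l}(v\otimes w)-v\otimes(e_{-\gamma_l}w)$ together with the PBW bases of $U_{\chi+\chi'}(\n^{-})$ and $U_{\chi'}(\n^{-})$. You instead expand each element \eqref{e: basis} in the basis \eqref{e: ZoZ basis 1} via the coproduct and observe unitriangularity with respect to left-factor height; both routes turn on the same two ingredients (the height of the left factor and the Leibniz rule), so the difference is one of bookkeeping rather than substance, and yours has the mild advantage of exhibiting an explicit invertible transition matrix rather than invoking spanning plus a dimension count.

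One point worth noting: the step you flag as the main obstacle is in fact essentially vacuous, and the machinery you plan to lean on there is not needed. When you distribute the ordered word $e_{-\gamma_D}^{c_D}\cdots e_{-\gamma_1}^{c_1}$ across the two tensor factors, the operators landing on the left factor form a subword of an ordered monomial and are therefore already an ordered monomial $e_{-\gamma_D}^{c'_D}\cdots e_{-\gamma_1}^{c'_1}$ with $c'_i\leq c_i$; applied to $z_\lambda$ this is literally a monomial basis vector of $Z_\chi(\lambda)$ of height strictly greater than $-\sum_i c_i\hgt(\gamma_i)$ unless all $c'_i=c_i$, with no straightening required. All straightening happens in the right factor inside $U_{\chi'}(\n^{-})$, where commutators of negative root vectors only produce negative root vectors (and $p$-th powers produce $\chi'$-scalars), and this never alters the left factor. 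Consequently neither Lemma~\ref{lem: neg root Z} nor Proposition~\ref{prop: pos root Z} is needed here -- no positive root vectors ever appear in this computation -- and the coefficient of the leading term is $\prod_i\binom{c_i}{c_i}=1$, so triangularity is immediate.
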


\begin{proof}
Since $Z_\chi(\lambda)\otimes Z_{\chi'}(\mu)$ has dimension $p^{2\dim\n^{+}}$, it is enough to show that the elements described in (\ref{e: basis}) span $Z_\chi(\lambda)\otimes Z_{\chi'}(\mu)$.
To do so, it is sufficient to show that each element in the basis described in (\ref{e: ZoZ basis 1}) can be written as a linear combination of elements of the form described in (\ref{e: basis}). Using the notation from (\ref{e: ZoZ basis 1}), we prove this by induction on $-\hgt(e_{-\gamma_D}^{a_D}\cdots  e_{-\gamma_1}^{a_1}z_\lambda)$. It is clear when $-\hgt(e_{-\gamma_D}^{a_D}\cdots  e_{-\gamma_1}^{a_1}z_\lambda)=0$; suppose that it is true whenever $-\hgt(e_{-\gamma_D}^{a_D}\cdots e_{-\gamma_1}^{a_1}z_\lambda)<m$. If $-\hgt(e_{-\gamma_D}^{a_D}\cdots  e_{-\gamma_1}^{a_1}z_\lambda)=m$ and $l$ is maximal such that $a_l\neq 0$ then 
\begin{equation*}
	\begin{split}
		(e_{-\gamma_l}^{a_l}\cdots e_{-\gamma_1}^{a_1}z_\lambda)\otimes (e_{-\gamma_D}^{b_D}\cdots e_{-\gamma_1}^{b_1}u_\mu)  = &  e_{-\gamma_l}((e_{-\gamma_l}^{a_l-1}\cdots e_{-\gamma_1}^{a_1}z_\lambda)\otimes (e_{-\gamma_D}^{b_D}\cdots e_{-\gamma_1}^{b_1}u_\mu)) \\ &  - (e_{-\gamma_l}^{a_l-1}\cdots e_{-\gamma_1}^{a_1}z_\lambda)\otimes (e_{-\gamma_l} e_{-\gamma_D}^{b_D}\cdots e_{-\gamma_1}^{b_1}u_\mu).
	\end{split}
\end{equation*}
By induction, and using the fact that $U_{\chi+\chi'}(\n^{-})$ and $U_{\chi'}(\n^{-})$ are algebras with bases consisting of the elements $e_{-\gamma_D}^{c_D}\cdots e_{-\gamma_1}^{c_1}$ for $0\leq c_i < p$, each part of this sum can be rewritten in terms of elements of the form (\ref{e: basis}). This completes the induction step and thus proves the result.

\end{proof}

Since $\bK z_\lambda$ is a $U_\chi(\b)$-module and since we may restrict $Z_{\chi'}(\mu)$ to a $U_{\chi'}(\b)$-module, we may take the tensor product of these modules to form the $U_{\chi+\chi'}(\b)$-module $\bK z_\lambda \otimes Z_{\chi'}(\mu)$. We may therefore define a $U_{\chi+\chi'}(\g)$-module $$ V_{\chi,\chi'}(\lambda,\mu):= U_{\chi+\chi'}(\g)\otimes_{U_{\chi+\chi'}(\b)}(\bK z_\lambda\otimes Z_{\chi'}(\mu)).$$ Furthermore, Frobenius reciprocity implies that for all $U_{\chi+\chi'}(\g)$-modules $M$ there is equality $$\Hom_{U_{\chi+\chi'}(\g)}(V_{\chi,\chi'}(\lambda,\mu), M) = \Hom_{U_{\chi+\chi'}(\b)}(\bK z_\lambda\otimes Z_{\chi'}(\mu), M).$$ In particular, $$\Hom_{U_{\chi+\chi'}(\g)}(V_{\chi,\chi'}(\lambda,\mu), Z_\chi(\lambda)\otimes Z_{\chi'}(\mu)) = \Hom_{U_{\chi+\chi'}(\b)}(\bK z_\lambda\otimes Z_{\chi'}(\mu), Z_\chi(\lambda)\otimes Z_{\chi'}(\mu)).$$ The natural embedding $\bK z_\lambda\otimes Z_{\chi'}(\mu) \hookrightarrow Z_\chi(\lambda)\otimes Z_{\chi'}(\mu)$ therefore induces a $U_{\chi+\chi'}(\g)$-module homomorphism 
$$ \Psi:V_{\chi,\chi'}(\lambda,\mu) \to Z_\chi(\lambda)\otimes Z_{\chi'}(\mu) $$ which sends $$ e_{-\gamma_D}^{a_D}\cdots e_{-\gamma_1}^{a_1}\otimes_{U_{\chi+\chi'}(\b)} (z_\lambda\otimes e_{-\gamma_D}^{b_D}\cdots e_{-\gamma_1}^{b_1}u_\mu) \mapsto e_{-\gamma_D}^{a_D}\cdots e_{-\gamma_1}^{a_1}(z_\lambda\otimes e_{-\gamma_D}^{b_D}\cdots e_{-\gamma_1}^{b_1}u_\mu).$$ It is straightforward that $V_{\chi,\chi'}(\lambda,\mu)$ has a basis consisting of the elements $$e_{-\gamma_D}^{a_D}\cdots e_{-\gamma_1}^{a_1}\otimes_{U_{\chi+\chi'}(\b)} (z_\lambda\otimes e_{-\gamma_D}^{b_D}\cdots e_{-\gamma_1}^{b_1}u_\mu) \quad \mbox{for} \quad 0\leq a_1,\ldots,a_D, b_1,\ldots,b_D<p.$$ The $U_{\chi+\chi'}(\g)$-module homomorphism $\Psi$ therefore sends a basis of $V_{\chi,\chi'}(\lambda,\mu)$ to a basis of $Z_\chi(\lambda)\otimes Z_{\chi'}(\mu)$ by Lemma~\ref{lem: ZoZ basis}; it is thus an isomorphism. We have proved the following result.

\begin{prop}\label{prop: Z=V}
The $U_{\chi+\chi'}(\g)$-modules $Z_\chi(\lambda)\otimes Z_{\chi'}(\mu)$ and $V_{\chi,\chi'}(\lambda,\mu)$ are isomorphic.
\end{prop}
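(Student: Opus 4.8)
The plan is to realise $Z_\chi(\lambda)\otimes Z_{\chi'}(\mu)$ as a module induced from $U_{\chi+\chi'}(\b)$ to $U_{\chi+\chi'}(\g)$ --- the inducing module being the tensor product of the highest-weight line of the first baby Verma module and the whole of the second --- and then to verify that the evident comparison map is an isomorphism by a basis (equivalently, dimension) count. The first observation is that $\bK z_\lambda$ is not merely a $U_\chi(\b)$-module in the abstract: it is a $U_\chi(\b)$-submodule of the restriction of $Z_\chi(\lambda)$ to $U_\chi(\b)$, since $e_\alpha z_\lambda = e_\alpha\otimes 1 = 1\otimes(e_\alpha\cdot 1)=0$ for $\alpha\in\Phi^{+}$ and $h z_\lambda = \lambda(h)z_\lambda$ for $h\in\t$; thus $\bK z_\lambda\cong\bK_\lambda$, and $\bK z_\lambda\otimes Z_{\chi'}(\mu)$ is a $U_{\chi+\chi'}(\b)$-submodule of the restriction of $Z_\chi(\lambda)\otimes Z_{\chi'}(\mu)$.

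First I would apply Frobenius reciprocity to the inclusion $\bK z_\lambda\otimes Z_{\chi'}(\mu)\hookrightarrow Z_\chi(\lambda)\otimes Z_{\chi'}(\mu)$ to obtain a $U_{\chi+\chi'}(\g)$-module homomorphism
\[
\Psi\colon V_{\chi,\chi'}(\lambda,\mu)\longrightarrow Z_\chi(\lambda)\otimes Z_{\chi'}(\mu),\qquad u\otimes_{U_{\chi+\chi'}(\b)}(z_\lambda\otimes w)\mapsto u\cdot(z_\lambda\otimes w),
\]
where $V_{\chi,\chi'}(\lambda,\mu)=U_{\chi+\chi'}(\g)\otimes_{U_{\chi+\chi'}(\b)}(\bK z_\lambda\otimes Z_{\chi'}(\mu))$. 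By the PBW theorem $U_{\chi+\chi'}(\g)$ is free over $U_{\chi+\chi'}(\b)$ on the monomials $e_{-\gamma_D}^{a_D}\cdots e_{-\gamma_1}^{a_1}$ with $0\leq a_i<p$, so $V_{\chi,\chi'}(\lambda,\mu)$ has a basis given by the elements $e_{-\gamma_D}^{a_D}\cdots e_{-\gamma_1}^{a_1}\otimes_{U_{\chi+\chi'}(\b)}(z_\lambda\otimes e_{-\gamma_D}^{b_D}\cdots e_{-\gamma_1}^{b_1}u_\mu)$ for $0\leq a_i,b_j<p$; in particular $\dim V_{\chi,\chi'}(\lambda,\mu)=p^{2\dim\n^{+}}=\dim(Z_\chi(\lambda)\otimes Z_{\chi'}(\mu))$.

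It then suffices to note that $\Psi$ carries this basis to the family in (\ref{e: basis}), which is a basis of $Z_\chi(\lambda)\otimes Z_{\chi'}(\mu)$ by Lemma~\ref{lem: ZoZ basis}; hence $\Psi$ is an isomorphism. (A marginally slicker route avoids tracking individual basis vectors: the image of $\Psi$ is a $U_{\chi+\chi'}(\g)$-submodule of $Z_\chi(\lambda)\otimes Z_{\chi'}(\mu)$ containing $\bK z_\lambda\otimes Z_{\chi'}(\mu)$, and Lemma~\ref{lem: ZoZ basis} says precisely that $z_\lambda\otimes Z_{\chi'}(\mu)$ already generates $Z_\chi(\lambda)\otimes Z_{\chi'}(\mu)$ over $U_{\chi+\chi'}(\n^{-})$, so $\Psi$ is surjective and thus bijective by the dimension equality.) The proposition is in the end just an assembly of the preceding lemmas, so I do not anticipate any genuine obstacle; the only point needing care is that the exponent tuples in (\ref{e: basis}) range over the full box $[0,p)^{2D}$, which is exactly what makes the dimension comparison tight --- and that is the content of Lemma~\ref{lem: ZoZ basis}.
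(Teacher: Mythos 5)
Your proposal is correct and follows essentially the same route as the paper: Frobenius reciprocity applied to the embedding $\bK z_\lambda\otimes Z_{\chi'}(\mu)\hookrightarrow Z_\chi(\lambda)\otimes Z_{\chi'}(\mu)$ produces the map $\Psi$, and Lemma~\ref{lem: ZoZ basis} shows $\Psi$ sends a basis of $V_{\chi,\chi'}(\lambda,\mu)$ to a basis of the tensor product, hence is an isomorphism. Your parenthetical surjectivity-plus-dimension variant is a harmless cosmetic rephrasing of the same argument.
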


With this new description of $Z_\chi(\lambda)\otimes Z_{\chi'}(\mu)$ we are able to deduce the desired filtration. Recall that $Z_{\chi'}(\mu)$ has a $U_0(\n^{+})$-module filtration 
$$0= U_0\subseteq U_1 \subseteq \cdots \subseteq U_{p^{\dim \n^{+}}}=Z_{\chi'}(\mu)$$ as in Corollary~\ref{cor: filt}. In fact, each $U_i$ is a $U_{\chi'}(\b)$-module since it has a basis of $\t$-weight vectors. We may therefore form a $U_{\chi'}(\b)$-module filtration of $\bK z_\lambda\otimes Z_{\chi'}(\mu)$ as $$0= \bK z_\lambda\otimes U_0\subseteq \bK z_\lambda\otimes U_1 \subseteq \cdots \subseteq \bK z_\lambda\otimes U_{p^{\dim \n^{+}}}=\bK z_\lambda \otimes Z_{\chi'}(\mu).$$ Recall that the induction functor from the category of $U_{\chi+\chi'}(\b)$-modules to the category of $U_{\chi+\chi'}(\g)$-modules (given by $M\mapsto U_{\chi+\chi'}(\g)\otimes_{U_{\chi+\chi'}(\b)} M$) is exact. Applying this to the above filtration therefore induces a filtration $$0=W_0 \subseteq W_1\subseteq \cdots \subseteq W_{p^{\dim\n^{+}}}= V_{\chi,\chi'}(\lambda,\mu) = Z_\chi(\lambda)\otimes Z_{\chi'}(\mu)$$ of $U_{\chi+\chi'}(\g)$-modules. Furthermore, the exactness of the induction functor implies that $$W_i/W_{i-1} \cong U_{\chi+\chi'}(\g)\otimes_{U_{\chi+\chi'}(\b)}((\bK z_\lambda\otimes U_i)/(\bK z_\lambda\otimes U_{i-1})).$$

As in Corollary~\ref{cor: filt}, there exists a monomial basis element $e_{-\gamma_D}^{b_D}\cdots e_{-\gamma_1}^{b_1} u_\mu$ of $Z_{\chi'}(\mu)$ (with $0\leq b_1,\ldots, b_D <p$) such that $$U_i/U_{i-1} = \bK\mbox{-span}\{e_{-\gamma_D}^{b_D}\cdots e_{-\gamma_1}^{b_1} u_\mu + U_{i-1}\}.$$ Therefore, $(\bK z_\lambda\otimes U_i)/(\bK z_\lambda\otimes U_{i-1})$ is a one-dimensional $U_{\chi+\chi'}(\b)$-module on which $\n^{+}$ acts as zero (by Corollary~\ref{cor: filt}) and $\t$ acts via $\lambda+\mu -b_D\gamma_D-\cdots - b_1\gamma_1.$ Therefore,  $$W_i/W_{i-1} \cong U_{\chi+\chi'}(\g)\otimes_{U_{\chi+\chi'}(\b)}((\bK z_\lambda\otimes U_i)/(\bK z_\lambda\otimes U_{i-1}))\cong Z_{\chi+\chi'}(\lambda+\mu -b_D\gamma_D-\cdots - b_1\gamma_1).$$
In other words, we have proved the following theorem.

\begin{theorem}\label{thm: Filt of tens}
Let $\chi,\chi'\in\g^{*}$ with $\chi(\n^{+})=\chi'(\n^{+})=0$, and let $\lambda\in\Lambda_\chi$ and $\mu\in \Lambda_{\chi'}$. Then $Z_\chi(\lambda)\otimes Z_{\chi'}(\mu)$ has a $U_{\chi+\chi'
}(\g)$-module filtration in which the successive quotients are precisely the modules $$Z_{\chi+\chi'}(\lambda+\mu -b_D\gamma_D-\cdots - b_1\gamma_1) \quad \mbox{for} \quad 0\leq b_1,\ldots, b_D<p.$$
Each such module appears precisely once in the filtration for each tuple $(b_1,\ldots,b_D)\in[0,p)^{D}$.
\end{theorem}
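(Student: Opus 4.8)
The plan is to reduce the tensor product to an induced module and then transport, through the exact induction functor, the height filtration of a baby Verma module regarded as a $U_0(\n^{+})$-module. First I would replace $Z_\chi(\lambda)\otimes Z_{\chi'}(\mu)$ by $V_{\chi,\chi'}(\lambda,\mu)=U_{\chi+\chi'}(\g)\otimes_{U_{\chi+\chi'}(\b)}(\bK z_\lambda\otimes Z_{\chi'}(\mu))$ using Proposition~\ref{prop: Z=V}. The advantage is that, since $U_{\chi+\chi'}(\g)$ is free over $U_{\chi+\chi'}(\b)$ by the PBW theorem, the functor $U_{\chi+\chi'}(\g)\otimes_{U_{\chi+\chi'}(\b)}(-)$ is exact, so any $U_{\chi+\chi'}(\b)$-module filtration of $\bK z_\lambda\otimes Z_{\chi'}(\mu)$ induces up to a $U_{\chi+\chi'}(\g)$-module filtration of $V_{\chi,\chi'}(\lambda,\mu)$ with correspondingly induced subquotients. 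Thus the task becomes: produce a good $U_{\chi+\chi'}(\b)$-filtration of $\bK z_\lambda\otimes Z_{\chi'}(\mu)$.

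To build that filtration I would start from the $U_0(\n^{+})$-module filtration $0=U_0\subseteq U_1\subseteq\cdots\subseteq U_{p^{\dim\n^{+}}}=Z_{\chi'}(\mu)$ supplied by Corollary~\ref{cor: filt}, whose successive quotients are one-dimensional, each spanned by the image of a monomial basis vector $e_{-\gamma_D}^{b_D}\cdots e_{-\gamma_1}^{b_1}u_\mu$, and on which $\n^{+}$ acts trivially. The one genuinely extra observation needed is that each $U_i$ is in fact a $U_{\chi'}(\b)$-submodule of $Z_{\chi'}(\mu)$: it is $\t$-stable because it is spanned by $\t$-weight vectors (the monomial basis vectors), it is $\n^{+}$-stable by Corollary~\ref{cor: filt}, and it lies inside the $U_{\chi'}(\g)$-module $Z_{\chi'}(\mu)$, so the $p$-central elements act by the correct $\chi'$-scalars. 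Tensoring this chain with the one-dimensional $U_{\chi}(\b)$-module $\bK z_\lambda$ then gives a chain of $U_{\chi+\chi'}(\b)$-submodules $0=\bK z_\lambda\otimes U_0\subseteq\cdots\subseteq\bK z_\lambda\otimes U_{p^{\dim\n^{+}}}=\bK z_\lambda\otimes Z_{\chi'}(\mu)$.

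Finally I would apply induction and identify the pieces. Exactness yields $W_i/W_{i-1}\cong U_{\chi+\chi'}(\g)\otimes_{U_{\chi+\chi'}(\b)}\bigl((\bK z_\lambda\otimes U_i)/(\bK z_\lambda\otimes U_{i-1})\bigr)$, and the bracketed module is one-dimensional: $\n^{+}$ kills $z_\lambda$ by definition of $\bK_\lambda$ and kills $U_i/U_{i-1}$ by Corollary~\ref{cor: filt}, so by the Leibniz rule it kills the tensor product, while $\t$ acts by $\lambda+\mu-b_D\gamma_D-\cdots-b_1\gamma_1$, read off from the weights of the two factors $z_\lambda$ and $e_{-\gamma_D}^{b_D}\cdots e_{-\gamma_1}^{b_1}u_\mu$. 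Since the $p$-centre acts by the $(\chi+\chi')$-scalars throughout, this weight lies in $\Lambda_{\chi+\chi'}$, the bracketed module is $\bK_{\lambda+\mu-b_D\gamma_D-\cdots-b_1\gamma_1}$, and hence $W_i/W_{i-1}\cong Z_{\chi+\chi'}(\lambda+\mu-b_D\gamma_D-\cdots-b_1\gamma_1)$. The assertion that each tuple $(b_1,\dots,b_D)\in[0,p)^{D}$ arises exactly once is then immediate, because the filtration steps of Corollary~\ref{cor: filt} biject with the monomial basis vectors of $Z_{\chi'}(\mu)$, which are indexed precisely by $[0,p)^{D}$. I expect no real obstacle at this stage: the substance lies entirely in Lemma~\ref{lem: neg root Z}, Proposition~\ref{prop: pos root Z} and Corollary~\ref{cor: filt}, which establish the $U_0(\n^{+})$-height filtration of a baby Verma module, and (to a lesser extent) in Lemma~\ref{lem: ZoZ basis} and Proposition~\ref{prop: Z=V}; once these are in hand the present argument is bookkeeping, the only point requiring a moment's care being the upgrade of the $U_0(\n^{+})$-filtration of $Z_{\chi'}(\mu)$ to a $U_{\chi'}(\b)$-filtration, which is what permits the induction to be carried out over $\b$.
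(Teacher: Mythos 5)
Your proposal is correct and follows essentially the same route as the paper: identify the tensor product with the induced module $V_{\chi,\chi'}(\lambda,\mu)$ via Proposition~\ref{prop: Z=V}, upgrade the $U_0(\n^{+})$-filtration of Corollary~\ref{cor: filt} to a $U_{\chi'}(\b)$-filtration using the $\t$-weight-vector basis, tensor with $\bK z_\lambda$, and apply the exact induction functor to read off the baby Verma subquotients and their multiplicities. The only difference is cosmetic (you spell out the Leibniz-rule and $\Lambda_{\chi+\chi'}$ checks that the paper leaves implicit or defers to a remark).
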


\begin{rmk}
Given $\lambda\in\Lambda_\chi$, $\mu\in\Lambda_{\chi'}$ and $(b_1,\ldots,b_D)\in [0,p)^D$, we do indeed have $\lambda+\mu-b_D\gamma_D -\cdots - b_1\gamma_1\in\Lambda_{\chi+\chi'}$, since
\begin{equation*}
	\begin{split}
		(\lambda+\mu -b_D\gamma_D-\cdots - b_1\gamma_1)&(h)^p  -  (\lambda+\mu -b_D\gamma_D-\cdots - b_1\gamma_1)(h^{[p]})
		= \\ (\lambda(h)^p-\lambda(h^{[p]})) + (\mu(h)^p-\mu(h^{[p]})) & -b_D(\gamma_D(h)^p-\gamma_D(h^{[p]}))- \cdots - b_1(\gamma_1(h)^p-\gamma_1(h^{[p]}))
		\\= \chi(h)^p + &\chi'(h)^p  =(\chi+\chi')(h)^p.
	\end{split}
\end{equation*}

\end{rmk}

\begin{exa}

Consider $\g=\sl_2(\bK)$ with the usual basis $e,h,f$, and suppose $\chr(\bK)=5$. Fix $\chi\in\g^{*}$ with $\chi(e)=\chi(h)=0$ and $\chi(f)=1$. Noting that $\t^{*}=\bK$, let $\lambda=2$ and $\mu=3$. Then $Z_\chi(\lambda)$ has basis $$v_0:=1\otimes 1,\quad v_1:=fv_0, \quad v_2:=f^2v_0,\quad v_3:=f^3v_0, \quad v_4:=f^4v_0$$ and $Z_{-\chi}(\mu)$ has basis $$w_0:=1\otimes 1,\quad w_1:=fw_0, \quad w_2:=f^2w_0,\quad w_3:=f^3w_0, \quad w_4:=f^4w_0.$$ It therefore follows that $Z_\chi(\lambda)\otimes Z_{-\chi}(\mu)$ has basis consisting of the elements $$v_i\otimes w_j \quad \mbox{for} \quad 0\leq i,j <5. $$ Applying Lemma~\ref{lem: ZoZ basis} (or checking directly) this module also has a basis consisting of the elements  $$f^i(v_0\otimes w_j)\quad \mbox{for} \quad 0\leq i,j<5.$$
We then set $$W_0=0,\quad W_1=\bK\mbox{-span}\{f^i(v_0\otimes w_0)\mid 0\leq i <5\},$$  $$W_2=\bK\mbox{-span}\{f^i(v_0\otimes w_j)\mid 0\leq i <5,\, 0\leq j\leq 1\},$$
$$W_3=\bK\mbox{-span}\{f^i(v_0\otimes w_j)\mid 0\leq i <5,\, 0\leq j\leq 2\},$$ $$W_4=\bK\mbox{-span}\{f^i(v_0\otimes w_j)\mid 0\leq i <5,\, 0\leq j\leq 3\},$$ $$W_5=\bK\mbox{-span}\{f^i(v_0\otimes w_j)\mid 0\leq i <5,\, 0\leq j\leq 4\}.$$ Then $$W_1/W_0\cong Z_0(0), \qquad W_2/W_1\cong Z_0(3), \qquad W_3/W_2\cong Z_0(1),$$ $$W_4/W_3\cong Z_0(4),\qquad W_5/W_4\cong Z_0(2).$$ Since we know the composition factors for $Z_0(k)$ over $\sl_2(\bK)$ for all $k\in\F_5$, in this case we may explicitly give the composition factors of $Z_\chi(2)\otimes Z_{-\chi}(3)$ as $$L_0(0),\,\, L_0(0),\,\, L_0(3), \,\, L_0(2),\,\, L_0(1),\,\, L_0(4),\,\, L_0(1),\,\, L_0(2).$$

\end{exa}

\subsection{Graded setting}\label{ss: Graded}

Suppose that $\chi\in\g^{*}$ is in (weak) standard Levi form, as described in Subsection~\ref{ss: Prelims}. The reduced enveloping algebra $U_\chi(\g)$ can be equipped with an $X(T)/\Z I$-grading, which is induced by setting $e_\alpha\in U_\chi(\g)_{\alpha + \Z I}$ and $\t\subseteq U_\chi(\g)_{0+\Z I}$. Define $\sC_\chi$ to be the category of $X(T)/\Z I$-graded $U_\chi(\g)$-modules $M$ with the property (X) that each homogeneous component $M_{\lambda+\Z I}$ of $M$ decomposes as a sum of $\t$ weight spaces as $$M_{\lambda+ \Z I}=\bigoplus_{\mu\in\lambda+\Z I +pX(T)} M_{\lambda+\Z I}^{d\mu}. $$ Morphisms in $\sC_\chi$ are homomorphisms of $U_\chi(\g)$-modules which preserve the $X(T)/\Z I$-grading.

Given $\lambda\in X(T)$ we define the baby Verma module $Z_\chi(\lambda)=U_\chi(\g)\otimes_{U_0(\b)}\bK_\lambda\in\sC_\chi$, where $\bK_\lambda$ is the one-dimensional $\b$-module on which $\n^{+}$ acts as zero and $\t$ acts as $d\lambda$, and where the $X(T)/\Z I$-grading on $Z_\chi(\lambda)$ is induced by setting $1\otimes 1\in Z_\chi(\lambda)_{\lambda+\Z I}$. If we forget the $X(T)/\Z I$-grading of $Z_\chi(\lambda)$ then we obtain the baby Verma module $Z_\chi(d\lambda)$ considered in the previous subsections. Each $Z_\chi(\lambda)$ has a unique simple quotient $L_\chi(\lambda)\in \sC_\chi$, and each simple object in $\sC_\chi$ is isomorphic to some $L_\chi(\lambda)$. By \cite[Proposition 2.6]{Janmodreps},
$$L_\chi(\lambda)\cong L_\chi(\mu) \quad \iff \quad Z_\chi(\lambda)\cong Z_\chi(\mu)\quad \iff \quad \lambda\in W_{I,p}\cdot \mu.$$ Denote by $\Lambda_I$ a set of representatives in $X(T)$ of the $W_{I,p}$-orbits in $X(T)$ under the dot-action.

A special case of $\sC_\chi$ occurs when $\chi=0$, in which case the objects are certain $X(T)$-graded $U_0(\g)$-modules; in fact, this category coincides with the category of finite-dimensional $G_1T$-modules, where $G_1$ is the first Frobenius kernel of $G$. Given $M\in\sC_\chi$ and $N\in\sC_{-\chi}$, we would like to define $M\otimes N\in\sC_0$. This will not be possible, however, so we must settle for a slightly weaker construction.

Specifically, we define $\widehat{\sC}_0$ to be the category of $X(T)/\Z I$-graded $U_0(\g)$-modules which satisfy property (X). There is then a natural functor $$ \sC_0\to \widehat{\sC}_0,\qquad M\mapsto \widehat{M}, $$ where $\widehat{M}$ coincides with $M$ as a $U_0(\g)$-module but has $X(T)/\Z I$-grading given by 
$$ M_{\lambda+\Z I}=\bigoplus_{\eta\in\lambda+\Z I}M_\eta.$$ For ease of notation, however, we write $\widehat{Z}_0(\lambda)$ and $\widehat{L}_0(\lambda)$ in $\widehat{\sC}_0$ in place of $\widehat{Z_0(\lambda)}$ and $\widehat{L_0(\lambda)}$. Note that the module $\widehat{L}_0(\lambda)$ is simple in $\widehat{\sC}_0$, since by \cite[Lemma 1]{Janmodreps} (see also \cite[Proposition 3.5]{GG}) $L_0(\lambda)$ remains simple when we forget the $X(T)$-grading entirely. 

\begin{prop}\label{prop: tens grad}
Let $M\in\sC_\chi$ and $N\in\sC_{-\chi}$. Then $M\otimes N$ can be given an $X(T)/\Z I$-grading such that $M\otimes N\in \widehat{\sC}_0$.
\end{prop}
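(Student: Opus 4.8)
The plan is to mimic the ungraded argument from Proposition~\ref{prop: Z=V} and Theorem~\ref{thm: Filt of tens}, carefully tracking the $X(T)/\Z I$-grading throughout. First I would observe that for $M\in\sC_\chi$ and $N\in\sC_{-\chi}$, the tensor product $M\otimes N$ is a $U_0(\g)$-module (since $\chi+(-\chi)=0$), and I would equip it with the $X(T)/\Z I$-grading $(M\otimes N)_{\lambda+\Z I}=\bigoplus_{\eta+\xi\in\lambda+\Z I}M_{\eta+\Z I}\otimes N_{\xi+\Z I}$, where the sum runs over pairs of cosets $\eta+\Z I$, $\xi+\Z I$ with $\eta+\xi\in\lambda+\Z I$; equivalently $(M\otimes N)_{\lambda+\Z I}=\bigoplus_{\eta+\Z I}M_{\eta+\Z I}\otimes N_{(\lambda-\eta)+\Z I}$. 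One checks that the $U_0(\g)$-action respects this grading: for $e_\alpha\in U_0(\g)_{\alpha+\Z I}$ acting by $e_\alpha(m\otimes n)=(e_\alpha m)\otimes n+m\otimes(e_\alpha n)$, both summands lie in the $(\lambda+\alpha)+\Z I$ component, and $\t\subseteq U_0(\g)_{0+\Z I}$ preserves each component; so $M\otimes N$ is an $X(T)/\Z I$-graded $U_0(\g)$-module.

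Next I would verify property (X), namely that each homogeneous component $(M\otimes N)_{\lambda+\Z I}$ decomposes as a sum of $\t$-weight spaces with weights in $d\lambda + d(\Z I) + pX(T)$ — more precisely in the appropriate coset. Since $M$ satisfies (X), each $M_{\eta+\Z I}$ is a direct sum of $\t$-weight spaces $M_{\eta+\Z I}^{d\nu}$ with $\nu\in\eta+\Z I+pX(T)$, and similarly for $N$. The tensor product of a $\t$-weight vector of weight $d\nu$ in $M$ with one of weight $d\nu'$ in $N$ is a $\t$-weight vector of weight $d(\nu+\nu')$ in $M\otimes N$ (the comultiplication on $\t$ is primitive). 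If $\nu\in\eta+\Z I+pX(T)$ and $\nu'\in\xi+\Z I+pX(T)$ with $\eta+\xi\in\lambda+\Z I$, then $\nu+\nu'\in\lambda+\Z I+pX(T)$, as required. Thus $(M\otimes N)_{\lambda+\Z I}=\bigoplus_{\mu\in\lambda+\Z I+pX(T)}(M\otimes N)_{\lambda+\Z I}^{d\mu}$, so $M\otimes N\in\widehat{\sC}_0$.

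The only subtle point — and the one I expect to be the main obstacle — is making precise the claim implicit in the preceding discussion that one genuinely cannot land in $\sC_0$ but only in $\widehat{\sC}_0$: the issue is that the natural $\t$-weights appearing in $M\otimes N$ need not be reduced modulo $pX(T)$ compatibly with a single $X(T)$-grading refining the $X(T)/\Z I$-grading, so one must be content with the coarser category. In the write-up this just means being careful to state that we do not claim $M\otimes N\in\sC_0$, and to check that the coset bookkeeping in the definition of the grading is well-defined (independent of the choice of coset representatives $\eta,\xi$), which is immediate since shifting $\eta$ by an element of $\Z I$ and $\xi$ by its negative leaves $M_{\eta+\Z I}\otimes N_{\xi+\Z I}$ unchanged. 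Everything else is a routine check of compatibility of the Hopf-algebra structure with the gradings, exactly as in the ungraded case treated in Subsection~\ref{ss: Tens of bbVmas}.
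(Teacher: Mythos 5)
Your proposal is correct and follows essentially the same route as the paper: you define the grading $(M\otimes N)_{\lambda+\Z I}=\bigoplus_{\eta+\Z I}M_{\eta+\Z I}\otimes N_{(\lambda-\eta)+\Z I}$, which is precisely the paper's definition, and the compatibility with the $U_0(\g)$-action and property (X) are exactly the routine checks the paper leaves to the reader. Your extra remarks on well-definedness of the coset bookkeeping and on why one only lands in $\widehat{\sC}_0$ rather than $\sC_0$ are accurate but not needed beyond what the paper states.
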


\begin{proof}
Define $$(M\otimes N)_{\lambda+\Z I}:=\bigoplus_{\mu+\Z I\in X(T)/\Z I} M_{\mu+\Z I}\otimes N_{\lambda-\mu+\Z I}.$$ It is straightforward to check that this makes $M\otimes N$ into an $X(T)/\Z I$-graded $U_0(\g)$-module satisfying property (X), as required.    
\end{proof}

We may now extend Theorem~\ref{thm: Filt of tens} to the $X(T)/\Z I$-graded setting.

\begin{prop}\label{prop: grad tens filt}
Let $\chi\in\g^{*}$ be in (weak) standard Levi form, corresponding to $I\subseteq \Pi$, and let $\lambda,\mu\in X(T)$. Then $Z_\chi(\lambda)\otimes Z_{-\chi}(\mu)\in\widehat{\sC}_0$ has a filtration in which the successive quotients are precisely the modules $$\widehat{Z}_{0}(\lambda+\mu -a_D\gamma_D-\cdots - a_1\gamma_1) \quad \mbox{for} \quad 0\leq a_1,\ldots, a_D<p.$$
Each such module appears precisely once in the filtration for each tuple $(b_1,\ldots,b_D)\in[0,p)^{D}$.
\end{prop}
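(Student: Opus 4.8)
The plan is to run the same argument used for Theorem~\ref{thm: Filt of tens}, but carried out entirely inside the graded category $\widehat{\sC}_0$, keeping track of $X(T)/\Z I$-gradings at each step. First I would note that the ungraded filtration of $Z_\chi(\lambda)\otimes Z_{-\chi}(\mu)$ by the $W_i$ already exists by Theorem~\ref{thm: Filt of tens} (applied with $\chi'=-\chi$), with successive quotients $Z_0(\lambda+\mu-b_D\gamma_D-\cdots-b_1\gamma_1)$; the only new content is that each $W_i$ is in fact a graded submodule and that each quotient, with its induced grading, is isomorphic to the appropriate $\widehat{Z}_0(\cdots)$ \emph{in} $\widehat{\sC}_0$. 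So the real work is bookkeeping of degrees.

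The key steps, in order. (1) Observe that the $U_0(\n^+)$-module filtration of $Z_{-\chi}(\mu)$ from Corollary~\ref{cor: filt} (equivalently the height filtration $V_{\geq -m}$ of Proposition~\ref{prop: pos root Z}) consists of graded subspaces: each $V_{\geq -m}$ is spanned by monomials $e_{-\gamma_D}^{b_D}\cdots e_{-\gamma_1}^{b_1}u_\mu$, which are $\t$-weight vectors and lie in the $X(T)/\Z I$-homogeneous component $\mu-b_D\gamma_D-\cdots-b_1\gamma_1+\Z I$; hence each step $U_i$ of the refined filtration is a graded $U_0(\b)$-submodule, and $U_i/U_{i-1}$ is one-dimensional, concentrated in $X(T)/\Z I$-degree $\mu-b_D\gamma_D-\cdots-b_1\gamma_1+\Z I$ (for the relevant tuple $(b_1,\dots,b_D)$), with $\n^+$ acting as zero and $\t$ acting via $d(\mu-b_D\gamma_D-\cdots-b_1\gamma_1)$. (2) Tensoring on the left with $\bK z_\lambda$, which sits in degree $\lambda+\Z I$, the filtration $\bK z_\lambda\otimes U_i$ is a graded $U_0(\b)$-module filtration of $\bK z_\lambda\otimes Z_{-\chi}(\mu)$ (using the graded tensor structure of Proposition~\ref{prop: tens grad}, specialised to $\b$), with one-dimensional graded quotients in degree $\lambda+\mu-b_D\gamma_D-\cdots-b_1\gamma_1+\Z I$. (3) Apply the graded induction functor $U_0(\g)\otimes_{U_0(\b)}(-)$, which is exact and grading-preserving; via the graded analogue of Proposition~\ref{prop: Z=V} (which identifies $Z_\chi(\lambda)\otimes Z_{-\chi}(\mu)$ with $U_0(\g)\otimes_{U_0(\b)}(\bK z_\lambda\otimes Z_{-\chi}(\mu))$, the isomorphism $\Psi$ being manifestly degree-preserving once both sides carry the gradings of Proposition~\ref{prop: tens grad}), one obtains a graded filtration $0=W_0\subseteq\cdots\subseteq W_{p^{\dim\n^+}}=Z_\chi(\lambda)\otimes Z_{-\chi}(\mu)$ with $W_i/W_{i-1}\cong U_0(\g)\otimes_{U_0(\b)}((\bK z_\lambda\otimes U_i)/(\bK z_\lambda\otimes U_{i-1}))$. (4) Identify this quotient: inducing the one-dimensional graded $\b$-module in degree $\lambda+\mu-b_D\gamma_D-\cdots-b_1\gamma_1+\Z I$ on which $\n^+$ acts trivially and $\t$ acts via $d(\lambda+\mu-b_D\gamma_D-\cdots-b_1\gamma_1)$ gives precisely $\widehat{Z}_0(\lambda+\mu-b_D\gamma_D-\cdots-b_1\gamma_1)$, by definition of the hatted baby Verma module in $\widehat{\sC}_0$ and the definition of the functor $\sC_0\to\widehat{\sC}_0$ (the induced $U_0(\g)$-module structure is the same as in the ungraded case, so this is $\widehat{Z_0(\cdots)}$, written $\widehat{Z}_0(\cdots)$). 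Finally, running $i$ over all $p^{\dim\n^+}=p^D$ values, the tuples $(b_1,\dots,b_D)$ range over $[0,p)^D$ with each appearing exactly once, exactly as in Corollary~\ref{cor: filt}.

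The main obstacle — though it is minor — is checking that all the structure maps genuinely respect the $X(T)/\Z I$-grading: specifically that $\Psi$ is an isomorphism in $\widehat{\sC}_0$ (not just ungraded), that $\bK z_\lambda\otimes U_i$ really is a graded $\b$-submodule (this uses that the $U_i$ are spanned by $\t$-weight vectors, so property (X) and the degree formula are compatible), and that graded induction $U_0(\g)\otimes_{U_0(\b)}(-)$ is exact in $\widehat{\sC}_0$. Each of these is routine given the constructions already set up in Subsection~\ref{ss: Tens of bbVmas} and the functor $\sC_0\to\widehat{\sC}_0$, so the proof is essentially a matter of decorating the ungraded argument with degrees and invoking Proposition~\ref{prop: tens grad}; I would write it as ``the proof is identical to that of Theorem~\ref{thm: Filt of tens}, now carried out in $\widehat{\sC}_0$, using Proposition~\ref{prop: tens grad} to equip each module in the argument with its $X(T)/\Z I$-grading and noting that every map involved preserves this grading.''
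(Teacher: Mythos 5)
Your proposal is correct and takes exactly the paper's route: the paper's own proof consists of the single sentence that the result ``follows as in Theorem~\ref{thm: Filt of tens}, paying attention to the $X(T)/\Z I$-grading at each point,'' and your write-up is simply a more detailed unpacking of that sentence (graded height filtration, graded analogue of Proposition~\ref{prop: Z=V}, exact graded induction, identification of the quotients with $\widehat{Z}_0(\lambda+\mu-b_D\gamma_D-\cdots-b_1\gamma_1)$ via Proposition~\ref{prop: tens grad}).
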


\begin{proof}
The follows as in Theorem~\ref{thm: Filt of tens}, paying attention to the $X(T)/\Z I$-grading at each point.
\end{proof}

This result allows us to say something about the composition series of $Z_\chi(\lambda)\otimes Z_{-\chi}(\mu)$ in $\widehat{\sC}_0$. We have already observed that the objects $\widehat{L}_0(\kappa)$ are simple in $\widehat{\sC}_0$ and all simple objects are of this form (by an argument similar to that in \cite[\S 2.5]{Janmodreps}). Furthermore, as above $$L_0(\kappa)\cong L_0(\tau) \quad \iff \quad \kappa=\tau$$ and it is clear that $$ \widehat{L}_0(\kappa)\cong \widehat{L}_0(\tau) \quad \iff \quad \kappa-\tau\in p\Z I.$$ So, in particular, non-isomorphic modules in $\sC_0$ can become isomorphic in $\widehat{\sC}_0$.

Combining this observation with Proposition~\ref{prop: grad tens filt} yields
\begin{equation}\label{e: ZoZ Comp1}
\begin{split}
	[Z_\chi(\lambda)\otimes Z_{-\chi}(\mu):\widehat{L}_0(\kappa)] & =\sum_{0\leq a_i <p} [\widehat{Z}_0(\lambda+\mu -a_D \gamma_D -\cdots - a_1\gamma_1): \widehat{L}_0(\kappa)] \\ & = \sum_{0\leq a_i <p}\sum_{\gamma\in  \Z I} [Z_0(\lambda+\mu -a_D \gamma_D -\cdots - a_1\gamma_1): L_0(\kappa+p\gamma)]
\end{split}
\end{equation}
for all $\lambda,\mu,\kappa\in X(T)$.

Alternatively, if $Z_\chi(\lambda)$ has composition series with composition factors $L_\chi(\sigma_1),\ldots,L_\chi(\sigma_s)$ and $Z_{-\chi}(\mu)$ has composition series with composition factors $L_{-\chi}(\tau_1),\ldots,L_{-\chi}(\tau_t)$, then $Z_\chi(\lambda)\otimes Z_{-\chi}(\mu)$ has a filtration with successive quotients $L_{\chi}(\sigma_i)\otimes L_{-\chi}(\tau_j)$. In particular, we have equality $$[Z_\chi(\lambda)\otimes Z_{-\chi}(\mu): \widehat{L}_0(\kappa)] = \sum_{\sigma,\tau\in\Lambda_I} [Z_\chi(\lambda):L_\chi(\sigma)][Z_{-\chi}(\mu):L_{-\chi}(\tau)][L_\chi(\sigma)\otimes L_{-\chi}(\tau):\widehat{L}_{0}(\kappa)].$$

\begin{lemma}\label{lem: twist comp mult}
There is equality $[Z_\chi(\lambda):L_\chi(\mu)]=[Z_{-\chi}(\lambda): L_{-\chi}(\mu)]$.
\end{lemma}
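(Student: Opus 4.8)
The plan is to exhibit an explicit equivalence (or at least a suitable exact correspondence) between the category of $U_\chi(\g)$-modules and the category of $U_{-\chi}(\g)$-modules which sends $Z_\chi(\lambda)$ to $Z_{-\chi}(\lambda)$ and $L_\chi(\mu)$ to $L_{-\chi}(\mu)$. Since composition multiplicities are preserved by any exact equivalence of categories, this would immediately give the claimed equality. The natural candidate is a twist by an automorphism of $\g$: the map $\tau\colon\g\to\g$ which acts as $-1$ on $\t$, sends $e_\alpha\mapsto \pm e_{-\alpha}$ and $e_{-\alpha}\mapsto \pm e_\alpha$ for $\alpha\in\Phi^+$ (the Chevalley involution, suitably normalised so that it is a Lie algebra automorphism and respects the restricted structure). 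Composing with a Weyl-group twist by $w_0$ if needed, one gets an automorphism $\sigma$ of $\g$ which preserves $\b$, $\t$, $\n^+$ (i.e. an automorphism coming from $-w_0$ on the root datum), and which satisfies $\sigma^*\chi' = -\chi'$ for $\chi'$ in (weak) standard Levi form — here one must check that negating $\chi$ really does correspond to $-w_0$ acting on the defining set $I\subseteq\Pi$, i.e. that $I$ is stable under $-w_0$, or alternatively absorb the discrepancy by also twisting by an element of the Levi Weyl group.

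Concretely, first I would set up the automorphism $\sigma$ of $\g$ and verify that it sends the restricted structure to itself, so that for any $\psi\in\g^*$ it induces an algebra isomorphism $U_\psi(\g)\xrightarrow{\sim} U_{\sigma^*\psi}(\g)$; via restriction of scalars this gives an exact equivalence ${}^\sigma(-)\colon U_{\sigma^*\psi}(\g)\text{-mod}\xrightarrow{\sim}U_\psi(\g)\text{-mod}$. Second I would check that $\sigma$ preserves $\b$ and acts on $\t^*$ by $\lambda\mapsto -w_0\lambda$ (or simply $\lambda\mapsto\lambda$ after the appropriate normalisation/renaming of weights), so that ${}^\sigma$ carries the one-dimensional $U_{-\chi}(\b)$-module $\bK_\lambda$ to $\bK_{-w_0\lambda}$ and hence, by exactness of induction from $\b$ to $\g$ and the defining presentation $Z_\chi(\mu)=U_\chi(\g)\otimes_{U_\chi(\b)}\bK_\mu$, sends $Z_{-\chi}(\lambda)$ to $Z_\chi(-w_0\lambda)$. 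Third, since ${}^\sigma$ is an equivalence it must send the unique simple quotient $L_{-\chi}(\mu)$ of $Z_{-\chi}(\mu)$ to the unique simple quotient of its image, namely $L_\chi(-w_0\mu)$. Putting these together, exactness of ${}^\sigma$ yields
$$[Z_\chi(-w_0\lambda):L_\chi(-w_0\mu)] = [Z_{-\chi}(\lambda):L_{-\chi}(\mu)],$$
and then re-indexing $\lambda\mapsto -w_0\lambda$, $\mu\mapsto -w_0\mu$ (a bijection of $\Lambda_0$) gives the statement as written.

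The main obstacle I anticipate is the bookkeeping around standard Levi form: one needs the automorphism to simultaneously take $\chi$ to $-\chi$ (up to $G$-conjugacy, which is harmless for composition multiplicities) and to be compatible with the hypotheses in force — i.e. to preserve the property of being in (weak) standard Levi form so that the simple modules $L_{\pm\chi}(\cdot)$ are well-defined. If $-w_0$ does not stabilise $I$, the cleanest fix is to note that $\chi$ and any $W$-conjugate are related by an algebra isomorphism of the reduced enveloping algebras and that composition multiplicities are labelled $W_{I,p}$-orbit-wise (as recorded just above via Jantzen's Proposition 2.6), so one is free to replace $I$ by $-w_0(I)$ throughout; alternatively one can avoid $w_0$ altogether by using the Chevalley involution directly, at the cost of it only being an anti-automorphism in some conventions, which would instead give a duality $M\mapsto M^\circledast$ on $U_\chi(\g)$-mod intertwining $Z_\chi(\lambda)$ with a "co-Verma" and requiring the standard fact that baby Verma and dual baby Verma modules have the same composition factors. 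Either route is routine once the right twisting datum is fixed; I would present the automorphism-twist version as the main argument and relegate the standard-Levi-form compatibility to a short verification.
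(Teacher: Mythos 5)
Your overall strategy --- twist by an automorphism of $\g$ taking $\chi$ to $-\chi$ and transport composition multiplicities through the resulting exact equivalence --- is the right one, but the specific twisting datum you chose leaves a genuine gap. The automorphism built from the Chevalley involution composed with $w_0$ acts on weights by $\nu\mapsto -w_0\nu$, so (granting all the standard-Levi-form bookkeeping) your argument yields
$$[Z_\chi(\lambda):L_\chi(\mu)]=[Z_{-\chi}(-w_0\lambda):L_{-\chi}(-w_0\mu)],$$
and the final ``re-indexing'' does not repair this: substituting $\lambda\mapsto -w_0\lambda$, $\mu\mapsto -w_0\mu$ merely renames the variables in this same twisted identity, it does not produce the claimed equality with \emph{identical} labels $\lambda,\mu$ on both sides. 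To finish along your route you would additionally need the symmetry $[Z_{-\chi}(\nu):L_{-\chi}(\kappa)]=[Z_{-\chi}(-w_0\nu):L_{-\chi}(-w_0\kappa)]$ for fixed $-\chi$, which you do not prove and which is of essentially the same depth as the lemma itself (your fallback via a duality and ``baby Verma and dual baby Verma have the same composition factors'' likewise imports an unproved ingredient, and the appeal to Jantzen's Proposition 2.6 to absorb the case $-w_0(I)\neq I$ does not apply, since that proposition compares weights for a \emph{fixed} $\chi$, not different Levi data). Note also that the lemma is used in the $X(T)/\Z I$-graded setting, so any twist must be checked to respect that grading.

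The paper avoids all of this by twisting by an \emph{inner} automorphism: choose $t\in T$ with $\alpha(t)=-1$ for every simple root $\alpha$ (such $t$ exists, see Humphreys \S 16.2). Then $\Ad(t)$ fixes $\t$ pointwise, preserves $\b$, $\n^{+}$ and the $X(T)/\Z I$-grading, and multiplies each $e_{-\alpha}$, $\alpha\in\Pi$, by $-1$; since $\chi$ in standard Levi form is supported on these vectors, $\Ad(t)$ induces an isomorphism $U_{-\chi}(\g)\xrightarrow{\sim}U_\chi(\g)$ and an exact equivalence $\sC_\chi\to\sC_{-\chi}$ that sends $Z_\chi(\lambda)$ to $Z_{-\chi}(\lambda)$ and $L_\chi(\mu)$ to $L_{-\chi}(\mu)$ with the \emph{same} weight labels, because the torus action does not move weights. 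The multiplicity equality then follows immediately, with no $w_0$, no re-indexing, and no condition on $I$. I would recommend replacing your outer ($-w_0$/Chevalley) twist by this torus twist.
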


\begin{proof}

There exists $t\in T$ such that $\alpha(t)=-1$ for all $\alpha\in\Pi$ (see \cite[\S16.2]{Hu}). Since $\chi$ is in standard Levi form, the adjoint action of $t$ on $\g$ thus induces an isomorphism $\Phi_t:U_{-\chi}(\g)\xrightarrow{\sim} U_\chi(\g)$. Any $U_\chi(\g)$-module $M$ can therefore be equipped with the structure of a $U_{-\chi}(\g)$-module via $\Phi_t$; we denote the resulting $U_{-\chi}(\g)$-module by $M^t$. Since $\Phi_t$ is in fact an isomorphism of $X(T)/\Z I$-graded algebras, it further induces an equivalence of categories $\sC_\chi\to \sC_{-\chi}$. 

We claim that $$Z_\chi(\lambda)^t=Z_{-\chi}(\lambda);$$ since these modules have the same $\bK$-dimension it is enough to find a surjective homomorphism $Z_{-\chi}(\lambda)\to Z_{\chi}(\lambda)^t$ in $\sC_{-\chi}$. By Frobenius reciprocity, homomorphisms $Z_{-\chi}(\lambda)\to Z_{\chi}(\lambda)^t$ are in bijection with elements $v\in (Z_{\chi}(\lambda)^t)_{\lambda+\Z I} $ with the properties (1) that $h\cdot v=d\lambda(h) v$ for all $h\in\t$ and (2) that $\n^{+}\cdot v=0$. Consider the element $z_\lambda=1\otimes 1 \in Z_\chi(\lambda)_{\lambda+\Z I}$, which we view as lying in $(Z_\chi(\lambda)^t)_{\lambda+ \Z I}$. Then $h\cdot z_\lambda=(t\cdot h)z_\lambda=hz_\lambda=d\lambda(h) z_\lambda$ and $e_\beta\cdot z_\lambda = (t\cdot e_\beta)z_\lambda=\beta(t) e_{\beta}z_\lambda=0$ for all $\beta\in \Phi^{+}$. Thus $z_\lambda$ corresponds to a homomorphism $Z_{-\chi}(\lambda)\to Z_{\chi}(\lambda)^t$, which is surjective because $z_\lambda$ generates $Z_{\chi}(\lambda)^t$ as a $U_{-\chi}(\g)$-module.

Since $L_{-\chi}(\mu)$ is the unique simple quotient of $Z_{-\chi}(\mu)$ and since $L_\chi(\mu)^t$ is a simple quotient of $Z_\chi(\mu)^t$, we also have $L_{-\chi}(\mu)=L_\chi(\mu)^t$. The exactness of the equivalence of categories $\sC_\chi\to\sC_{-\chi}$ induced by $\Phi_t$ thus gives the result.

\end{proof}

As a consequence, we have 
$$[Z_\chi(\lambda)\otimes Z_{-\chi}(\mu): \widehat{L}_0(\kappa)] = \sum_{\sigma,\tau\in\Lambda_I} [Z_\chi(\lambda):L_\chi(\sigma)][Z_{\chi}(\mu):L_{\chi}(\tau)][L_\chi(\sigma)\otimes L_{-\chi}(\tau):\widehat{L}_{0}(\kappa)]$$ for all $\lambda,\mu,\kappa\in X(T)$.
Combining this with (\ref{e: ZoZ Comp1}) gives
\begin{multline*}
\sum_{0\leq a_i <p}\sum_{\gamma\in  \Z I} [Z_0(\lambda+\mu -a_D \gamma_D -\cdots - a_1\gamma_1): L_0(\kappa+p\gamma)]\\= \sum_{\sigma,\tau\in\Lambda_I} [Z_\chi(\lambda):L_\chi(\sigma)][Z_{\chi}(\mu):L_{\chi}(\tau)][L_\chi(\sigma)\otimes L_{-\chi}(\tau):\widehat{L}_{0}(\kappa)]
\end{multline*}
for all $\lambda,\mu,\kappa\in X(T)$.

\section{Minimal-dimensional modules}\label{s: Min Dim'l Mods}

We turn now to the second half of this paper, in which we consider minimal-dimensional $U_\chi(\g)$-modules. We begin in Subsection~\ref{ss: Prelim 2} by re-establishing some notation for this section of the paper. Subsection~\ref{ss: Rep Theory} then introduces some of the representation-theoretic tools we use in this section, largely following \cite{Pr07} for the constructions, and Subsection~\ref{ss: Assoc Var} develops some of the geometric tools (again, largely following \cite{Pr07}). The substance of this section is then in Subsection~\ref{ss: Type A}, where we focus on the case of $\GL_N$ and show (under some assumptions) that each minimal-dimensional $U_\chi(\gl_N(\bK))$-module arises as a quotient modules of certain modules obtained by base change of simple highest weight modules from characteristic zero.

\subsection{Preliminaries (Reprise)}\label{ss: Prelim 2}

In this subsection, we introduce the notation that is to be used in this second half of the paper. Many of the concepts will be familiar from Section~\ref{s: Tensor Prods}; however, as we need to work over various different fields and rings in this section we emphasise these more heavily in the notation. As such, with the exception of those concepts which only make sense over fields of positive characteristic (like the $p$-centre or reduced enveloping algebras), we use this subsection to reset our notation. Any notation omitted in this subsection can safely be taken to mean the same thing as in Section~\ref{s: Tensor Prods}.

In this section, let $G_{\C}$ be either a simple simply connected complex algebraic group or $\GL_N(\C)$, with Lie algebra $\g_\C$ either a complex simple Lie algebra or $\gl_N(\C)$. Fix a maximal torus $T_{\C}$ of $G_{\C}$ and a (positive) Borel subgroup $B_\C$ of $G_\C$ containing $T_\C$; the respective Lie algebras $\t_\C$ and $\b_\C$ are a Cartan subalgebra and a Borel subalgebra of $\g_\C$, respectively. The root system corresponding to $(G_\C,T_\C)$ is denoted $\Phi$, the system of positive roots corresponding to $B_\C$ is denoted $\Phi^{+}$, and the associated set of simple roots is denoted $\Pi$. Given $\alpha\in\Phi$, we set $\g_{\C,\alpha}=\{x\in\g\mid [h,x]=\alpha(h)x \,\,\mbox{for all}\,\, h\in\t_\C\}$ and we set $$\n_\C^{+}=\bigoplus_{\alpha\in\Phi^{+}}\g_{\C,\alpha}\quad\quad\mbox{and}\quad \quad\n_\C^{-}=\bigoplus_{\alpha\in\Phi^{+}}\g_{\C,-\alpha}.$$ We then have $\b_{\C}=\t_\C\oplus \n^{+}_\C$ and $\g_\C=\n_\C^{-}\oplus \t_\C \oplus \n_\C^{+}$. Let $N_{G_{\C}}(T_\C)$ be the normaliser of $T_\C$ in $G_\C$; the Weyl group of $(G_\C,T_\C)$ is then $W=N_{G_\C}(T_\C)/T_\C$, which is a finite group. Given $w\in W$, we denote by $\dot{w}$ an (arbitrary) lift of $w$ to an element of $N_{G_\C}(T_\C)\subseteq G_\C$. In the case when $G_\C=\GL_N(\C)$, the Weyl group can be identified with $S_N$. In this case, we always choose $\dot{w}$ to be a permutation matrix, which therefore determines it uniquely.

When $G_\C$ is simple, we take $$\cB=\{e_\alpha\mid \alpha\in\Phi\} \cup \{h_\beta\mid \beta\in \Pi\}$$ to be a Chevalley basis of $\g_\C$, where $e_\alpha\in\g_{\C,\alpha}$ and $h_\beta=[e_\beta,e_{-\beta}]\in \t_\C$. When $G_\C=\GL_N(\C)$ we take $$\cB=\{e_{ij} \mid 1\leq i,j\leq N\},$$ where $e_{ij}$ is the $N\times N$ matrix with a $1$ in the $(i,j)$-th position and zeroes in all other positions. In either case, we equip $\cB$ with a total ordering; at present, we do so arbitrarily.

We also define the following subsets of $\cB$ when $G_\C$ is simple (resp. when $G_\C=\GL_N(\C)$):
$$\cB^{\pm}=\{ e_{\alpha}\mid\alpha\in\Phi^{\pm}\},\quad \left(\mbox{resp. } \cB^{\pm}=\{e_{ij}\mid \pm(i-j)<0\} \right),$$
$$\cB^{0}=\{h_\beta\mid \beta\in \Pi\},\quad \left(\mbox{resp. } \cB^0=\{e_{ii} \mid 1\leq i\leq N\} \right),$$
$$\cB^{\geq}=\cB^{0}\cup \cB^{+}.$$

Let $R=\cS^{-1}\Z$ be the localisation of $\Z$ at a finitely-generated multiplicatively-closed subset $\cS$ of $\Z$. At present we assume only that $\cS$ must contain all primes which are not very good\footnote{In particular, if $\Phi$ has type $A_{N-1}$ then $S$ must contain all $p\mid N$ -- even if $G_\C=\GL_N(\C)$.} for $\Phi$ (although for some results later on we must impose further assumptions on $\cS$). We call the assumption that $\cS$ (and thus $R$) satisfies these properties ``Assumption (R1)'', and use similar labelling for future assumptions we require. We may then define an $R$-form of $\g_\C$ by $$\g_R=R\mbox{-span}(\cB);$$ by definition, we have $\g_\C\cong \g_R\otimes_R \C$. 

Let $\bK$ be an algebraically closed field of positive characteristic $p>0$. Throughout the remainder of the paper, we make the following assumption:
$$p\mbox{ \bf{is invertible in} }R.$$
In particular, as we impose further restrictions on the properties that $R$ must satisfy, we consequently limit which $p$ may be considered. We define $\g_\bK=\g_R\otimes_{R} \bK$; this is the Lie algebra of the algebraic group $G_\bK$ over $\bK$ with the same root datum as $G_\C$. Under our assumptions, $G_\bK$ satisfies the standard hypotheses (A), (B) and (C). Furthermore, when $G_\C$ is simple our assumptions on $p$ are enough to guarantee that $\g_{\bK}$ is simple.

We similarly define $\t_R=R\mbox{-span}(\cB^0)$, $\b_R=R\mbox{-span}(\cB^{\geq})$, $\n_R^{+}=R\mbox{-span}(\cB^{+})$ and $\n_R^{-}=R\mbox{-span}(\cB^{-})$, and set $\t_\bK=\t_R\otimes_R \bK$, $\b_\bK=\b_R\otimes_R \bK$, $\n_\bK^{+}=\n_R^{+}\otimes_R \bK$ and $\n_\bK^{-}=\n_R^{-}\otimes_R \bK$; we identify these latter vector spaces with subspaces of $\g_\bK$. We define $\t_\C^{*}$ to be the $\C$-vector space of linear maps $\t_\C\to\C$, $\t_R^{*}$ to be the free $R$-module of $R$-linear maps $\t_R\to R$, and $\t_\bK^{*}$ to be the vector space of $\bK$-linear maps $\t_\bK\to\bK$. Note that our choice of basis $\cB^0$ gives an embedding $\t_R^{*}\hookrightarrow \t_\C^{*}$ and a homomorphism $\t_R^{*}\to \t_{\bK}^{*}$ with kernel $p\t_R^{*}$; given $\lambda\in \t_R^{*}$, we generally write $\widetilde{\lambda}$ for the image of $\lambda$ under the latter map. The only exception to this will be for roots -- given a root $\alpha\in\Phi$, we continue to denote it by $\alpha$ whether we view it as an element of $\t_\C^{*}$, $\t_R^{*}$ or $\t_\bK^{*}$.\footnote{In fact, consistent with Section~\ref{s: Tensor Prods}, we also use this notation for the corresponding element of the character groups $X(T_\C)$ and $X(T_\bK)$.}

In the same way, we write $\g_\C^{*}$, $\g_R^{*}$ and $\g_\bK^{*}$ for the vector space/free module of linear maps $\g_\C\to\C$, $\g_R\to R$ and $\g_\bK\to\bK$ respectively. As with the Cartan subalgebras, our choice of basis $\cB$ gives an inclusion $\g_R^{*}\hookrightarrow \g_\C^{*}$ and a homomorphism $\g_R^{*}\to \g_\bK^{*}$ with kernel $p\g_R^{*}$. Noting that $\g_\bK^{*}=\g_R^{*}\otimes_R \bK$, the latter map identifies with the map $\chi\mapsto \chi\otimes 1$. In this setting, we tend to abuse notation and write $\chi$ for the corresponding element in each of $\g_\C^{*}$, $\g_R^{*}$ and $\g_\bK^{*}$; when it may cause confusion, however, we occasionally write $\chi\otimes 1$ for the image of $\chi\in\g_R^{*}$ in $\g_\bK^{*}$.

\subsection{Representation theory}\label{ss: Rep Theory}

We introduce here some of the representation-theoretic objects we use further below. The constructions here largely follow \cite{Pr07}.

Given $\lambda\in\t_\C^{*}$, the Verma module $M_\C(\lambda)$ corresponding to $\lambda$ is the $U(\g_\C)$-module defined as $M_{\C}(\lambda):=U(\g_\C)\otimes_{U(\b_\C)}\C_\lambda$, where $\C_\lambda$ is the one-dimensional $U(\b_\C)$-module on which $\n^{+}_\C$ acts trivially and $\t_\C$ acts via $\lambda$. This has a unique simple quotient $L_\C(\lambda)$, which is finite-dimensional if and only if $\lambda$ is integral and dominant, i.e. $\lambda(h_\alpha)\in\Z_{\geq 0}$ for all $\alpha\in \Pi$. Let us denote $v_\lambda:=1\otimes 1\in M_\C(\lambda)$, a highest weight vector in $M_\C(\lambda)$, and let us denote by $\overline{v}_\lambda$ the image of $v_\lambda$ in $L_\C(\lambda)$. We further denote by $M^{\max}_\C(\lambda)$ the unique maximal submodule of $M_\C(\lambda)$, so that $M_\C(\lambda)/M^{\max}_\C(\lambda)=L_\C(\lambda)$.

The universal enveloping algebra $U(\g_\C)$ has a Poincar\'{e}-Birkhoff-Witt $\C$-basis consisting of ordered monomials in $\cB$ (with respect to the chosen total order on $\cB$). Note further that $\g_R$ is a Lie ring and has a universal enveloping $R$-algebra $U(\g_R)$. Moreover, $U(\g_R)$ is a free $R$-algebra which also has an $R$-basis of ordered monomials in $\cB$. In particular, this implies that $U(\g_\C)\cong U(\g_R)\otimes_R \C$. 

Given $\lambda\in \t_R^{*}$, define $M_R(\lambda)=U(\g_R)v_\lambda$, which is an $R$-form of $M_\C(\lambda)$ by construction. We then define the $U(\g_R)$-modules $M_R^{\max}(\lambda):=M_R(\lambda)\cap M^{\max}(\lambda)$ and $L_R(\lambda):=M_R(\lambda)/M_R^{\max}(\lambda)$. We may alternately define $L_R(\lambda)=U(\g_R)\overline{v}_\lambda$; it is straightforward to see that these two definitions coincide. The important point for present purposes is that there is a surjective homomorphism of $U(\g_R)$-modules $M_R(\lambda)\twoheadrightarrow L_R(\lambda)$.

Note now that $U(\g_\bK)$ also has a Poincar\'{e}-Birkhoff-Witt basis of monomials in $\cB$, and we thus have $U(\g_\bK)\cong U(\g_R)\otimes_R \bK$. In particular we may define the $U(\g_\bK)$-modules $M_p(\lambda):=M_R(\lambda)\otimes_R \bK$ and $L_p(\lambda):=L_R(\lambda)\otimes_R \bK$. It is straightforward to check that we may alternatively define $M_p(\lambda)=U(\g_\bK)\otimes_{U(\b_{\bK})} \bK_\lambda$, where $\bK_\lambda$ is the one-dimensional $U(\b_\bK)$-module on which $\n^{+}_{\bK}$ acts via zero and $\t_{\bK}$ acts via $\widetilde{\lambda}$. Letting $w_\lambda:=v_\lambda\otimes 1\in M_p(\lambda)$ and $\overline{w}_\lambda:=\overline{v}_\lambda \otimes 1\in L_p(\lambda)$, the surjection $M_R(\lambda)\twoheadrightarrow L_R(\lambda)$ induces a surjective homomorphism of $U(\g_\bK)$-modules $M_p(\lambda)\twoheadrightarrow L_p(\lambda)$ sending $w_\lambda$ to $\overline{w}_\lambda$.

Given $\chi\in \g_\bK^{*}$, define $J_\chi:=\langle x^{p}-x^{[p]}-\chi(x)^p\mid x\in\g_\bK\rangle\subseteq Z_p(\g_\bK)$. Then we may form the submodule $J_\chi M_p(\lambda)\subseteq M_p(\lambda)$ and the quotient module $M_p^\chi(\lambda):=M_p(\lambda)/J_\chi M_p(\lambda)$. By the same token, $J_\chi L_p(\lambda)$ is a $U(\g_\bK)$-submodule of $L_p(\lambda)$ and we may form the quotient module $L_p^\chi(\lambda):=L_p(\lambda)/J_\chi L_p(\lambda)$. Both $M_p^\chi(\lambda)$ and $L_p^\chi(\lambda)$ are then (possibly trivial) $U_\chi(\g_{\bK})$-modules, and the surjection $M_p(\lambda)\twoheadrightarrow L_p(\lambda)$ induces a surjection $M_p^\chi(\lambda)\twoheadrightarrow L_p^\chi(\lambda)$ of $U_\chi(\g_{\bK})$-modules.

The next lemma shows that the $U_\chi(\g_\bK)$-modules $M_p^\chi(\lambda)$ in fact coincide with certain modules we discussed extensively in Section~\ref{s: Tensor Prods} of this paper: baby Verma modules.

\begin{prop}\label{prop: M and Z iso}
The $U_\chi(\g_{\bK})$-modules $M_p^\chi(\lambda)$ and $Z_\chi(\widetilde{\lambda})$ are isomorphic. 
\end{prop}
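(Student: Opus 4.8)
The plan is to exhibit an explicit isomorphism by comparing generators and presentations. Both modules are cyclic $U_\chi(\g_\bK)$-modules: $M_p^\chi(\lambda)$ is generated by the image $\overline{w}_\lambda$ of $w_\lambda$, and $Z_\chi(\widetilde\lambda) = U_\chi(\g_\bK)\otimes_{U_\chi(\b_\bK)} \bK_{\widetilde\lambda}$ is generated by $z_{\widetilde\lambda} = 1\otimes 1$. The key structural observation is that in $M_p^\chi(\lambda)$ the vector $\overline{w}_\lambda$ satisfies $\n^+_\bK \cdot \overline{w}_\lambda = 0$ and $h\cdot \overline{w}_\lambda = \widetilde\lambda(h)\overline{w}_\lambda$ for $h\in\t_\bK$ — because $w_\lambda$ already has these properties in $M_p(\lambda) = U(\g_\bK)\otimes_{U(\b_\bK)}\bK_\lambda$, and passing to the quotient by $J_\chi M_p(\lambda)$ preserves them. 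Hence $\overline{w}_\lambda$ is annihilated by the left ideal of $U_\chi(\g_\bK)$ that defines $Z_\chi(\widetilde\lambda)$, so by the universal property of induction (Frobenius reciprocity) there is a homomorphism $\varphi: Z_\chi(\widetilde\lambda)\to M_p^\chi(\lambda)$ sending $z_{\widetilde\lambda}\mapsto \overline{w}_\lambda$, and it is surjective since $\overline{w}_\lambda$ generates.

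Next I would check injectivity, and the cleanest route is a dimension count. We have $\dim_\bK Z_\chi(\widetilde\lambda) = p^{\dim\n^+_\bK} = p^D$. For $M_p^\chi(\lambda)$, recall $M_p(\lambda) = U(\g_\bK)\otimes_{U(\b_\bK)}\bK_\lambda$ is free over $U(\n^-_\bK)$ of rank one (PBW), so it has $\bK$-basis given by ordered monomials in the negative root vectors $e_{-\gamma_i}$ with exponents in $\Z_{\geq 0}$. Quotienting by $J_\chi$ imposes the relations $x^p - x^{[p]} - \chi(x)^p$; applied to the $e_{-\gamma_i}$ (using $\chi(\n^+)=0$ one can arrange $e_{-\gamma_i}^{[p]}$ and $\chi(e_{-\gamma_i})^p$ appropriately, but in fact one just needs that these relations cut the monomial exponents down to $[0,p)$), one finds $M_p^\chi(\lambda)$ is spanned by the $p^D$ monomials $e_{-\gamma_D}^{a_D}\cdots e_{-\gamma_1}^{a_1}\overline{w}_\lambda$ with $0\le a_i < p$. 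Thus $\dim_\bK M_p^\chi(\lambda) \le p^D = \dim_\bK Z_\chi(\widetilde\lambda)$, and since $\varphi$ is a surjection between spaces with $\dim$ source $\le \dim$ target it must be an isomorphism.

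Alternatively — and perhaps more transparently — one can argue directly that $M_p^\chi(\lambda) \cong U_\chi(\g_\bK)\otimes_{U_\chi(\b_\bK)}\bK_{\widetilde\lambda}$ by identifying the functor $M_p^\chi(-)$ with the composite of base change and reduction: $M_p(\lambda) = U(\g_\bK)\otimes_{U(\b_\bK)}\bK_\lambda$, and tensoring down by $Z_p(\g_\bK)/J_\chi$ commutes with the induction $U(\g_\bK)\otimes_{U(\b_\bK)}(-)$ up to replacing $U(\b_\bK)$ by $U_\chi(\b_\bK)$, because the $p$-centre of $\b_\bK$ maps compatibly. This is precisely the statement that reduction by a central character commutes with induction from a restricted subalgebra. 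One must just verify that $\bK_\lambda$ becomes $\bK_{\widetilde\lambda}$ as a $U_\chi(\b_\bK)$-module, which requires $\widetilde\lambda\in\Lambda_\chi$ — and this follows because $x^p - x^{[p]} - \chi(x)^p$ acts as $\widetilde\lambda(x)^p - \widetilde\lambda(x^{[p]}) - \chi(x)^p$ on $\bK_\lambda$ for $x\in\t_\bK$, which must vanish in the quotient.

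I expect the main obstacle to be purely bookkeeping: confirming that the $J_\chi$-relations, which are stated for all $x\in\g_\bK$, reduce the PBW basis of $M_p(\lambda)$ exactly to the monomial basis of $Z_\chi(\widetilde\lambda)$ — i.e. that no further collapse occurs and the dimension is exactly $p^D$ rather than strictly smaller. This is resolved by the dimension count sandwich above: the surjection $\varphi$ forces $\dim M_p^\chi(\lambda) \ge p^D$ while the spanning set forces $\dim M_p^\chi(\lambda)\le p^D$, so equality holds and $\varphi$ is an isomorphism. No genuine difficulty arises; the content is in assembling the universal properties correctly.
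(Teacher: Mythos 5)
Your first step is fine and coincides with the paper's: Frobenius reciprocity gives a homomorphism $\varphi\colon Z_\chi(\widetilde{\lambda})\to M_p^\chi(\lambda)$ sending $z_{\widetilde{\lambda}}\mapsto\overline{w}_\lambda$, surjective because $\overline{w}_\lambda$ generates. The problem is injectivity. A surjection $Z_\chi(\widetilde{\lambda})\twoheadrightarrow M_p^\chi(\lambda)$ forces $\dim_\bK M_p^\chi(\lambda)\leq p^D$, \emph{not} $\geq p^D$ as you assert in your closing paragraph, and your PBW spanning argument yields the same inequality $\dim_\bK M_p^\chi(\lambda)\leq p^D$. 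So your ``sandwich'' consists of two upper bounds and no lower bound: the genuinely nontrivial point --- that no further collapse occurs when passing from $M_p(\lambda)$ to $M_p(\lambda)/J_\chi M_p(\lambda)$, i.e.\ that the dimension is exactly $p^D$ --- is precisely the obstacle you named and then dismissed with a count that is circular. As written, the first argument does not establish that $\varphi$ is injective.

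The paper closes exactly this gap by constructing the inverse map instead of counting dimensions: since $z_{\widetilde{\lambda}}$ is a weight-$\widetilde{\lambda}$ vector annihilated by $\n^{+}_\bK$, the universal property of $M_p(\lambda)$ gives a $U(\g_\bK)$-module map $M_p(\lambda)\to Z_\chi(\widetilde{\lambda})$, and because $J_\chi$ annihilates $Z_\chi(\widetilde{\lambda})$ this factors through $M_p^\chi(\lambda)$; both composites fix the cyclic generators, hence are identities, so $\varphi$ is an isomorphism. Your second, ``alternative'' sketch is much closer to a valid repair: applying the right-exact functor $-\otimes_{U(\b_\bK)}\bK_\lambda$ to $U(\g_\bK)\twoheadrightarrow U_\chi(\g_\bK)$ identifies $M_p^\chi(\lambda)$ with $U_\chi(\g_\bK)\otimes_{U(\b_\bK)}\bK_\lambda$, and then $U_\chi(\b_\bK)\otimes_{U(\b_\bK)}\bK_\lambda\cong\bK_{\widetilde{\lambda}}$ precisely because $\widetilde{\lambda}\in\Lambda_\chi$ (note this is a hypothesis built into the statement, not something that ``vanishes in the quotient'': if some $x^p-x^{[p]}-\chi(x)^p$ with $x\in\t_\bK$ acted by a nonzero scalar, the quotient $M_p^\chi(\lambda)$ would simply be zero). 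But in your write-up this route is asserted --- ``this is precisely the statement that reduction by a central character commutes with induction'' --- rather than proved, so it does not yet fill the hole; either carry it out along the lines just indicated, or argue via the two-sided use of universal properties as the paper does.
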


\begin{proof}
Since $\n_{\bK}^{+}w_\lambda=0$ and $hw_\lambda=\widetilde{\lambda}(h)w_\lambda$ for all $h\in\t_{\bK}$, Frobenius reciprocity for $Z_\chi(\widetilde{\lambda})$ implies that there exists a homomorphism $Z_\chi(\widetilde{\lambda})\to M_p^\chi(\lambda)$. Similarly, as $\n_{\bK}^{+}z_\lambda=0$ and $hz_\lambda=\widetilde{\lambda}(h)z_\lambda$, Frobenius reciprocity for $M_p(\lambda)$ induces a homomorphism of $U(\g_{\bK})$-modules $M_p(\lambda)\to Z_\chi(\widetilde{\lambda})$. Since $J_\chi Z_\chi(\widetilde{\lambda})=0$, this induces a homomorphism $M_p^\chi(\lambda)\to Z_\chi(\widetilde{\lambda})$. It is straightforward to see that the maps $Z_\chi(\widetilde{\lambda})\to M_p^\chi(\lambda)$ and $M_p^\chi(\lambda)\to Z_\chi(\widetilde{\lambda})$ are inverse to each other.
\end{proof}

This proposition allows us to prove the following corollary. To state it, recall that when $\chi$ is in (weak) standard Levi form each baby Verma module $Z_\chi(\widetilde{\lambda})$ has a unique simple quotient which we denote $L_\chi(\widetilde{\lambda})$.

\begin{cor}\label{cor: surj hom simple}
Suppose that $\chi$ is in standard Levi form and that $L_p^\chi(\lambda)\neq 0$. Then there is a surjective homomorphism of $U_\chi(\g_{\bK})$-modules $L_p^\chi(\lambda)\twoheadrightarrow L_\chi(\widetilde{\lambda})$.
\end{cor}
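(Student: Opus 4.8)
The plan is to realise $L_p^\chi(\lambda)$ as a nonzero quotient of the baby Verma module $Z_\chi(\widetilde{\lambda})$ and then exploit the fact that, in standard Levi form, $Z_\chi(\widetilde{\lambda})$ has a unique maximal submodule. First I would recall from the discussion preceding Proposition~\ref{prop: M and Z iso} that the surjection $M_p(\lambda)\twoheadrightarrow L_p(\lambda)$ descends to a surjection of $U_\chi(\g_\bK)$-modules $M_p^\chi(\lambda)\twoheadrightarrow L_p^\chi(\lambda)$. Composing with the isomorphism $Z_\chi(\widetilde{\lambda})\xrightarrow{\sim} M_p^\chi(\lambda)$ supplied by Proposition~\ref{prop: M and Z iso} yields a surjective homomorphism $\pi\colon Z_\chi(\widetilde{\lambda})\twoheadrightarrow L_p^\chi(\lambda)$. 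Setting $M=\ker\pi$ we get $L_p^\chi(\lambda)\cong Z_\chi(\widetilde{\lambda})/M$, and the hypothesis $L_p^\chi(\lambda)\neq 0$ forces $M$ to be a proper submodule of $Z_\chi(\widetilde{\lambda})$.

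Next I would invoke the standing fact (recalled in Subsection~\ref{ss: Prelims}) that when $\chi$ is in (weak) standard Levi form the baby Verma module $Z_\chi(\widetilde{\lambda})$ has a unique simple quotient $L_\chi(\widetilde{\lambda})$, equivalently a unique maximal submodule $\operatorname{rad}Z_\chi(\widetilde{\lambda})$ with $Z_\chi(\widetilde{\lambda})/\operatorname{rad}Z_\chi(\widetilde{\lambda})\cong L_\chi(\widetilde{\lambda})$. Since every proper submodule of $Z_\chi(\widetilde{\lambda})$ is contained in $\operatorname{rad}Z_\chi(\widetilde{\lambda})$, we have $M\subseteq\operatorname{rad}Z_\chi(\widetilde{\lambda})$, so the canonical projection factors as $Z_\chi(\widetilde{\lambda})\twoheadrightarrow Z_\chi(\widetilde{\lambda})/M = L_p^\chi(\lambda)\twoheadrightarrow Z_\chi(\widetilde{\lambda})/\operatorname{rad}Z_\chi(\widetilde{\lambda}) = L_\chi(\widetilde{\lambda})$; the second map is the desired surjection.

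There is no genuine obstacle here — the argument is a short diagram chase. The only point requiring (minor) care is checking that the composite $Z_\chi(\widetilde{\lambda})\cong M_p^\chi(\lambda)\twoheadrightarrow L_p^\chi(\lambda)$ is a well-defined surjection of $U_\chi(\g_\bK)$-modules, which is immediate from the functoriality of base change and passage to quotients together with Proposition~\ref{prop: M and Z iso}.
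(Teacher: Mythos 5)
Your proposal is correct and follows essentially the same route as the paper: both identify $M_p^\chi(\lambda)$ with $Z_\chi(\widetilde{\lambda})$ via Proposition~\ref{prop: M and Z iso}, use the surjection $M_p^\chi(\lambda)\twoheadrightarrow L_p^\chi(\lambda)$, and conclude from the uniqueness of the simple quotient of the baby Verma module. Your phrasing via the unique maximal submodule containing $\ker\pi$ is just the paper's "any simple quotient of $L_p^\chi(\lambda)$ is a simple quotient of $M_p^\chi(\lambda)$" argument stated in terms of submodules rather than quotients.
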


\begin{proof}
By Proposition~\ref{prop: M and Z iso}, $M_p^\chi(\lambda)=Z_\chi(\widetilde{\lambda})$. Therefore, it has unique simple quotient $L_\chi(\widetilde{\lambda})$. We have already observed that $M_p^\chi(\lambda)$ surjects onto $L_p^\chi(\lambda)$ (and thus, in particular, $L_p^\chi(\lambda)$ is finite-dimensional). If $L_p^\chi(\lambda)\neq 0$ then it must have a simple quotient, which must also be a simple quotient of  $M_p^\chi(\lambda)$. The result follows.
\end{proof}

Consider now only the case where $G_\C=\GL_N(\C)$. Then $G_\C':=\SL_N(\C)$ is a simple simply connected complex algebraic group, with Lie algebra $\g_\C'=\sl_N(\C)$. Writing $\t_\C'=\t_\C\cap \sl_N(\C)$, there is a natural surjection $\t_\C^{*}\twoheadrightarrow (\t_\C')^{*}$ which we write as $\lambda\mapsto \lambda'$. One can easily see that restricting the simple $U(\gl_N(\C))$-module $L_\C(\lambda)$ to $U(\sl_N(\C))$ gives the simple $U(\sl_N(\C))$-module $L_\C(\lambda')$. Furthermore, it is a straightforward exercise to check that, based on our choices of bases $\cB$, restriction of the $U(\gl_N(\bK))$-module $L_p(\lambda)$ to $U(\sl_N(\bK))$ gives precisely the module $L_p(\lambda')$ which we obtain by directly applying in the setting of $G_\C=\SL_N(\C)$ the constructions of Subsection~\ref{ss: Prelim 2} and this subsection.

As with the Cartan subalgebras, we may restrict an element $\chi\in\gl_N(\bK)^{*}$ to an element $\chi'\in \sl_N(\bK)^{*}$. Define then $J_\chi=\langle x^p-x^{[p]}-\chi(x)\mid x\in\gl_N(\bK)\rangle\subseteq Z_p(\gl_N(\bK))$ and $J'_{\chi'}=\langle x^p-x^{[p]}-\chi(x)\mid x\in\sl_N(\bK)\rangle\subseteq Z_p(\sl_N(\bK))$, and consider the $U_\chi(\gl_N(\bK))$-module $L_p^\chi(\lambda)=L_p(\lambda)/J_\chi L_p(\lambda)$ and the $U_{\chi'}(\sl_N(\bK))$-module $L_p^{\chi'}(\lambda')=L_p(\lambda')/J_{\chi'}' L_p(\lambda')$. These two modules are then related as follows -- note that $U_{\chi'}(\sl_N(\bK))$ is a subalgebra of $U_\chi(\gl_N(\bK))$.

\begin{lemma}
Let $y\in\gl_N(\bK)$ denote the identity matrix. If $\chi(y)=0$ then restriction of the $U_\chi(\gl_N(\bK))$-module $L_p^\chi(\lambda)$ to $U_{\chi'}(\sl_N(\bK))$ precisely gives the $U_{\chi'}(\sl_N(\bK))$-module $L_p^{\chi'}(\lambda')$.
\end{lemma}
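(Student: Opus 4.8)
The plan is to unwind both sides as base changes from characteristic zero and match them. First I would recall the setup: $L_p^\chi(\lambda) = L_p(\lambda)/J_\chi L_p(\lambda)$ where $L_p(\lambda) = L_R(\lambda)\otimes_R \bK$ is the base change of $L_R(\lambda) = U(\g_R)\overline{v}_\lambda$ with $\g_R = \gl_N(R)$, and similarly $L_p^{\chi'}(\lambda') = L_p(\lambda')/J'_{\chi'}L_p(\lambda')$ with $L_p(\lambda') = L_R(\lambda')\otimes_R \bK$ built from $\sl_N(R)$. The excerpt has already established (as a ``straightforward exercise'' using the choices of bases $\cB$) that restriction of $L_p(\lambda)$ from $U(\gl_N(\bK))$ to $U(\sl_N(\bK))$ is exactly $L_p(\lambda')$; I would invoke this directly. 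So the only thing left is to see that the submodule we quotient by is the same on both sides.

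Next I would compare $J_\chi L_p(\lambda)$ and $J'_{\chi'}L_p(\lambda)$ as subspaces of $L_p(\lambda)$. Clearly $J'_{\chi'}\subseteq J_\chi$ since the generators $x^p - x^{[p]} - \chi(x)^p$ for $x\in\sl_N(\bK)$ are among the generators for $x\in\gl_N(\bK)$; hence $J'_{\chi'}L_p(\lambda)\subseteq J_\chi L_p(\lambda)$. For the reverse inclusion, decompose $\gl_N(\bK) = \sl_N(\bK)\oplus \bK y$ where $y$ is the identity matrix (this is a direct sum of restricted Lie algebras since $p\nmid N$ is forced by Assumption (R1), which puts all primes dividing $N$ into $\cS$; the excerpt's footnote makes this explicit). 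The element $y$ is central in $\gl_N(\bK)$ and satisfies $y^{[p]} = y$, so the corresponding generator of $J_\chi$ is $y^p - y - \chi(y)^p = y^p - y$ (using $\chi(y)=0$). I would check that $J_\chi$ is generated as an ideal by $J'_{\chi'}$ together with $y^p - y$: indeed $Z_p(\gl_N(\bK)) = Z_p(\sl_N(\bK))\otimes_{\bK} \bK[y^p - y]$ by the PBW decomposition coming from the splitting of $\gl_N = \sl_N\oplus\bK y$, and $J_\chi$ is the ideal it generates whose ``$\sl_N$ part'' is $J'_{\chi'}$ and whose ``$\bK y$ part'' is $(y^p - y)$.

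So it remains to show $(y^p - y)L_p(\lambda)\subseteq J'_{\chi'}L_p(\lambda)$, equivalently that $y^p - y$ acts as zero on $L_p^{\chi'}(\lambda')$ — wait, more precisely that $y$ acts as a scalar on $L_p(\lambda)/J'_{\chi'}L_p(\lambda)$ with $y^p - y$ applied to that scalar being zero, which is automatic for any scalar in $\F_p\subseteq\bK$. The key point: $y$ is central in $\gl_N(\bK)$, so it acts on the cyclic module $L_p(\lambda) = U(\gl_N(\bK))\overline{w}_\lambda$ by the scalar $\widetilde{\lambda}(y)\in\bK$ by which it acts on the highest weight vector $\overline{w}_\lambda$. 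Hence on all of $L_p(\lambda)$, the element $y^p - y$ acts by the scalar $\widetilde{\lambda}(y)^p - \widetilde{\lambda}(y)$. But actually I do not even need this to vanish: since $y$ acts by a central scalar $c := \widetilde{\lambda}(y)$ on the whole of $L_p(\lambda)$, the generator $y^p - y - \chi(y)^p = y^p - y$ of $J_\chi$ acts by $c^p - c$ on $L_p(\lambda)$. For this to give a potentially nontrivial quotient we need $c^p - c = 0$, i.e. $c\in\F_p$; but regardless, $(y^p-y)L_p(\lambda) = (c^p - c)L_p(\lambda)$, and $c^p - c = \widetilde\lambda(y)^p - \widetilde\lambda(y) = 0$ precisely because $\chi(y) = 0$ forces $\widetilde\lambda(y)^p = \widetilde\lambda(y^{[p]}) = \widetilde\lambda(y)$ when $\widetilde\lambda\in\Lambda_\chi$ — here one uses that $\lambda\in\t_R^*$ and $\widetilde\lambda$ lies in $\Lambda_\chi$ because $L_p^\chi(\lambda)\neq 0$ would otherwise be forced... actually the cleanest route: $\overline w_\lambda$ generates $L_p(\lambda)$ and $(y^p - y)\overline w_\lambda = (\widetilde\lambda(y)^p - \widetilde\lambda(y))\overline w_\lambda$; since $M_p^\chi(\lambda) = Z_\chi(\widetilde\lambda)$ is nonzero by Proposition~\ref{prop: M and Z iso} (baby Vermas are always nonzero), we have $\widetilde\lambda\in\Lambda_\chi$, and $\chi(y)=0$ then gives $\widetilde\lambda(y)^p - \widetilde\lambda(y) = \chi(y)^p = 0$. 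Therefore $y^p - y$ already acts as zero on $L_p(\lambda)$, so $J_\chi L_p(\lambda) = J'_{\chi'}L_p(\lambda)$, and combining with the known restriction of $L_p(\lambda)$ to $\sl_N(\bK)$ being $L_p(\lambda')$ finishes the proof.

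The main obstacle — and it is really a bookkeeping point rather than a genuine difficulty — is justifying cleanly that $J_\chi L_p(\lambda)$ is exactly $J'_{\chi'}L_p(\lambda) + (y^p - y)L_p(\lambda)$, which requires the PBW-based tensor decomposition $Z_p(\gl_N(\bK)) \cong Z_p(\sl_N(\bK))\otimes_\bK \bK[y^p-y]$ and hence the hypothesis $p\nmid N$; once that is in place everything else is immediate from centrality of $y$ and $\chi(y)=0$. I would state the decomposition $\gl_N(\bK) = \sl_N(\bK)\oplus\bK y$ of restricted Lie algebras explicitly, note $p\nmid N$ from Assumption (R1), and then the ideal computation is routine.
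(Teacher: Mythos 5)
Your overall route is the same as the paper's: reduce to the equality $J_\chi L_p(\lambda)=J'_{\chi'}L_p(\lambda')$ using the already-established fact that $L_p(\lambda)$ restricts to $L_p(\lambda')$, split $\gl_N(\bK)=\sl_N(\bK)\oplus\bK y$ (legitimate since $p\nmid N$ by (R1)), observe that the only extra generator of $J_\chi$ is $y^p-y^{[p]}-\chi(y)^p=y^p-y$, and kill it using the fact that the central element $y$ acts on the cyclic module $L_p(\lambda)$ by the scalar $c=\widetilde{\lambda}(y)$. The one place where you diverge is the justification that $c^p-c=0$, and as written that step is shaky: you deduce $\widetilde{\lambda}\in\Lambda_\chi$ from Proposition~\ref{prop: M and Z iso}, but that proposition only makes sense when $\widetilde{\lambda}\in\Lambda_\chi$ in the first place (otherwise $Z_\chi(\widetilde{\lambda})$ is not defined), so the argument is circular; moreover the lemma assumes only $\chi(y)=0$, not $\chi(\t_\bK)=0$, so $\widetilde{\lambda}\in\Lambda_\chi$ is not available from the stated hypotheses. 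The paper's (and the simplest correct) justification is more elementary and needs no hypothesis beyond $\chi(y)=0$: since $\lambda\in\t_R^{*}$, the scalar $\widetilde{\lambda}(y)$ is the image of $\lambda(y)\in R$ under $R\to\bK$, hence lies in the prime field $\F_p$, so $c^p=c$ automatically and $y^p-y$ annihilates $L_p(\lambda)$. With that replacement your argument coincides with the paper's proof; everything else (the inclusion $J'_{\chi'}L_p(\lambda)\subseteq J_\chi L_p(\lambda)$, the tensor/sum decomposition of the $p$-centre, the centrality argument) is exactly what the paper does.
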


\begin{proof}
Since $L_p(\lambda)$ and $L_p(\lambda')$ coincide as $U(\sl_N(\C))$-modules, it suffices to show that there is equality $J_\chi L_p(\lambda)=J_{\chi'}' L_p(\lambda')$. 
The $p$-centre $Z_p(\gl_N(\C))$ coincides with the polynomial subalgebra $\bK[e_{11}^p-e_{11},\ldots, e_{NN}^p-e_{NN}]$ of $U(\gl_N(\C))$. Setting $x_{ii}=e_{i-1,i-1}-e_{i,i}$ for $i=2,\ldots,N$, and recalling that $y=e_{11}+\cdots+e_{NN}$, we may rewrite this subalgebra as the polynomial algebra $\bK[x_{22}^p-x_{22},\ldots, x_{NN}^p-x_{NN},y^p-y]$ so long as $p\nmid N$ (which for us is imposed by our assumptions on $R$). Since $Z_p(\sl_N(\bK))=\bK[x_{22}^p-x_{22},\ldots, x_{NN}^p-x_{NN}]$, we may rewrite this as $Z_p(\gl_N(\bK))=Z_p(\sl_N(\bK))+\langle y^p-y\rangle$.

From this perspective, it is straightforward to see that $J_\chi=J_{\chi'}'+ \langle y^p\rangle$. Since $y$ acts on $L_p(\lambda)$ via scalar multiplication by $\widetilde{\lambda}(y)\in R$ (which follows from the easy-to-see analogous statement for $M_\C(\lambda)$ and $M_p(\lambda)$), we get that $(y^p-y)v=(\widetilde{\lambda}(y)^p-\widetilde{\lambda}(y))v=0$ for all $v\in L_p(\lambda)$. Thus, $J_\chi L_p(\lambda)=J_{\chi'}' L_p(\lambda')$, as required.

\end{proof}

By definition, $\chi\in\gl_N(\C)^{*}$ is in standard Levi form if and only if $\chi\in\sl_N(\C)^{*}$ is in standard Levi form. 

\begin{cor}\label{cor: gl v sl}
Suppose that $\chi\in\gl_N(\C)^{*}$ is in standard Levi form. Then $L_p^\chi(\lambda)\neq 0$ if and only if $L_p^{\chi'}(\lambda')\neq 0$.
\end{cor}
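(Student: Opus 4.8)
The plan is to deduce Corollary~\ref{cor: gl v sl} directly from the preceding lemma together with the relationship between simple highest weight modules for $\gl_N$ and $\sl_N$. The first step is to reduce to the case $\chi(y)=0$ where $y\in\gl_N(\bK)$ is the identity matrix. This is permissible because twisting $\chi$ by a scalar multiple of the trace form $y^{*}$ (equivalently, tensoring $U_\chi(\gl_N(\bK))$-modules by a one-dimensional module on which $\sl_N(\bK)$ acts trivially) changes neither the property of being in standard Levi form, nor the nonvanishing of the relevant quotient module, nor the restriction to $\sl_N(\bK)$; and since $\gl_N(\bK)=\sl_N(\bK)\oplus \bK y$ (using that $p\nmid N$, which holds under (R1)), we may always arrange $\chi(y)=0$ after such a twist. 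Alternatively, since the statement only concerns nonvanishing, one observes that $L_p^\chi(\lambda)$ and $L_p^{\chi''}(\lambda)$ for $\chi''$ differing from $\chi$ by a scalar multiple of $y^{*}$ are isomorphic up to a twist, and in particular one is zero iff the other is; so without loss of generality $\chi(y)=0$.

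With $\chi(y)=0$ assumed, the previous lemma tells us that the restriction of the $U_\chi(\gl_N(\bK))$-module $L_p^\chi(\lambda)$ to the subalgebra $U_{\chi'}(\sl_N(\bK))$ is precisely the $U_{\chi'}(\sl_N(\bK))$-module $L_p^{\chi'}(\lambda')$. Since restriction along the inclusion $U_{\chi'}(\sl_N(\bK))\hookrightarrow U_\chi(\gl_N(\bK))$ does not change the underlying vector space, we have $L_p^\chi(\lambda)=0$ as a $U_\chi(\gl_N(\bK))$-module if and only if $L_p^{\chi'}(\lambda')=0$ as a $U_{\chi'}(\sl_N(\bK))$-module. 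Combined with the remark (stated just before the corollary) that $\chi\in\gl_N(\C)^{*}$ is in standard Levi form if and only if $\chi'\in\sl_N(\C)^{*}$ is, this gives the claim.

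The main obstacle is the reduction step: one must be careful that twisting $\chi$ by a multiple of $y^{*}$ genuinely preserves nonvanishing of $L_p^\chi(\lambda)$. The cleanest way to see this is to note that $L_p^\chi(\lambda)$ is a quotient of $L_p(\lambda)$, that $y$ acts on $L_p(\lambda)$ by the scalar $\widetilde{\lambda}(y)$, and that the defining relations $x^p-x^{[p]}-\chi(x)^p$ for $x\in\sl_N(\bK)$ are unaffected by adding a multiple of $y^{*}$ to $\chi$; only the single relation coming from $y$ changes, and — as in the proof of the previous lemma — that relation is automatically satisfied on $L_p(\lambda)$ provided $\chi(y)^p=\widetilde{\lambda}(y)^p-\widetilde{\lambda}(y)$. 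Adjusting $\chi$ on $\bK y$ to make this hold (which we may do freely, as it affects neither the standard Levi form property nor $\chi'$) leaves $J_\chi L_p(\lambda)$, and hence $L_p^\chi(\lambda)$, unchanged. After this observation the argument is immediate.
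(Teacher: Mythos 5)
Your middle paragraph is exactly the paper's (implicit) argument: once the preceding lemma applies, restriction along $U_{\chi'}(\sl_N(\bK))\hookrightarrow U_\chi(\gl_N(\bK))$ identifies $L_p^\chi(\lambda)$ with $L_p^{\chi'}(\lambda')$ as vector spaces, so one vanishes if and only if the other does. The trouble is your reduction to the case $\chi(y)=0$, which is both unnecessary and incorrectly justified. It is unnecessary because standard Levi form, as defined in the paper, includes the condition $\chi(\b)=0$, and the identity matrix $y$ lies in $\t\subseteq\b$; hence $\chi(y)=0$ holds automatically under the hypothesis of the corollary and the lemma applies directly, with no twisting at all.

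It is incorrectly justified because the twisting claims fail in general. Adding a nonzero multiple of the trace functional to $\chi$ destroys standard Levi form (the shifted functional no longer vanishes on $\t$), contrary to your assertion that the property is preserved. More seriously, such a shift does not preserve the nonvanishing of $L_p^\chi(\lambda)$ for the \emph{fixed} module $L_p(\lambda)$: the element $y$ acts on $L_p(\lambda)$ by the scalar $\widetilde{\lambda}(y)$, which lies in the prime subfield $\F_p$, and $y^{[p]}=y$, so the central element $y^p-y^{[p]}-\chi(y)^p$ acts on $L_p(\lambda)$ by the scalar $-\chi(y)^p$. If $\chi(y)\neq 0$ this element of $J_\chi$ acts invertibly, so $J_\chi L_p(\lambda)=L_p(\lambda)$ and $L_p^\chi(\lambda)=0$, whereas after your ``adjustment'' the quotient may be nonzero; thus the claim that adjusting $\chi$ on $\bK y$ ``leaves $J_\chi L_p(\lambda)$ unchanged'' is true only in the trivial case where no adjustment is needed --- which is precisely the case at hand, but for the reason above rather than the one you give. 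Replace your first and third paragraphs by the one-line observation that $\chi(\b)=0$ forces $\chi(y)=0$, and your proof coincides with the paper's.
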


In particular, this corollary means that to determine when $L_p^\chi(\lambda)\neq 0$ for $G_\C=\GL_N(\C)$ (and thus to apply Corollary~\ref{cor: surj hom simple}), we need only consider the situation over $\SL_N(\C)$. This will make some exposition easier, and therefore in the next subsection  we consider only $G_\C$ simple.

\subsection{Associated Varieties}\label{ss: Assoc Var}

In light of Corollary~\ref{cor: surj hom simple}, we want to determine when $L_p^\chi(\lambda)$ is non-zero. The main tool to do so is \cite[Lemma 3.1]{Pr07}; to explain this result, we first need to introduce various subvarieties of $\g_\C^{*}$ and $\g_\bK^{*}$. We proceed to do so in this subsection. As mentioned above, we (largely) only consider $G_\C$ which are simple simply connected complex algebraic groups here, for ease of exposition. Our explanations follow \cite{JanNO,Pr07}.

As we have already observed, the universal enveloping algebra $U(\g_{\C})$ has a Poincar\'{e}-Birkhoff-Witt basis, which induces a PBW-filtration on $U(\g_{\C})$. More specifically, if we temporarily write the basis of $\g_{\C}$ as $x_1,\ldots, x_k$, then $U(\g_{\C})$ has a basis consisting of the elements $$x_1^{a_1}\cdots x_k^{a_k}\qquad \mbox{for} \qquad a_1,\ldots,a_k\geq 0.$$ The PBW-filtration of $U(\g_\C)$ is then given by $$\C=U_0(\g_{\C})\subseteq U_1(\g_{\C})\subseteq U_2(\g_{\C})\subseteq \cdots \subseteq U(\g_{\C})$$ where $$U_n(\g_{\C}):=\C\mbox{-span}\{x_1^{a_1}\cdots x_k^{a_k} \mid a_1+\cdots + a_k \leq n \}.$$

It is well-known that the associated graded algebra $\gr U(\g_{\C}):=\bigoplus_{n\geq 0}U_{n}(\g)/U_{n-1}(\g)$ of $U(\g_{\C})$ with respect to this filtration (setting $U_{-1}(\g_\C)=0$) is isomorphic to the symmetric algebra $S(\g_{\C})$. Given an ideal $I$ of $U(\g_{\C})$, we may form an ideal of $S(\g_{\C})$ as $$\gr I=\bigoplus_{n\geq 0} \left((I\cap U_{n}(\g_{\C}))+ U_{n-1}(\g_{\C})\right)/U_{n-1}(\g_{\C})\subseteq S(\g_{\C})\cong \C[\g_{\C}^{*}].$$ The {\bf associated variety} $\cV\cA(I)$ of $I$ is then defined to be the zero set of $\gr I$ in $\g_{\C}^{*}$. When $I$ is a primitive ideal (i.e. the annihilator of a simple $U(\g_{\C})$-module), Joseph's Irreducibility Theorem says that $$\cVA(I)=\overline{\O}$$ for some nilpotent coadjoint $G_{\C}$-orbit $\O\subseteq \g_{\C}^{*}$ (see \cite[Corollary 3.3]{Jo81} for type $A$, \cite[Theorem 3.10]{Jo85} for other types). Using the Killing form, we can identify $\g_{\C}$ and $\g_{\C}^{*}$ and thus identify $\cVA(I)=\overline{\O}$ with a subvariety of $\g_\C$ (we abuse notation and use $\cVA(I)$ to denote this subvariety as well).

\begin{rmk}
When $G_\C=\GL_N(\C)$ and $I$ is a primitive ideal in $U(\g_\C)$, we may apply the same construction to get the associated variety $\cVA(I)\subseteq \gl_N(\C)^{*}$. Using the trace form (instead of the Killing form, which is no longer non-degenerate), we may also identify this with a subvariety of $\gl_N(\C)$ which, again, we still denote by $\cVA(I)$. We then denote by $\cVA'(I)$ the image of $\cVA(I)$ under the surjection $\gl_N(\C)\twoheadrightarrow\pgl_N(\C)\xrightarrow{\sim} \sl_N(\C)$. Joseph's Irreducibility Theorem again gives that $\cVA'(I)$ is the closure of a nilpotent orbit. (Although we don't use it in this paper, a similar definition may be used whenever $G_\C$ is reductive.)  
\end{rmk}

Given $\lambda\in\t_R^{*}$, the $U(\g_{\C})$-module $L_{\C}(\lambda)$ is simple and thus the annihilator $I(\lambda):=\Ann_{U(\g_{\C})} L_{\C}(\lambda)$ is a primitive ideal of $U(\g_{\C})$. There thus exists a nilpotent coadjoint $G_{\C}$-orbit, which we henceforth denote $\O_\lambda$, such that $\cVA(I(\lambda))=\overline{\O}_\lambda$.

The PBW-filtration on $U(\g_{\C})$ induces a filtration on $L_{\C}(\lambda)$ by setting $L_{\C,n}(\lambda)=U_n(\g_{\C})\overline{v}_\lambda$. This filtration is compatible with the filtration of $U(\g_{\C})$ in the sense that $U_{n}(\g_{\C}) L_{\C,m}(\lambda)\subseteq L_{\C, m+n}(\lambda)$. From this, we may form the associated graded module $$\gr L_{\C}(\lambda):= \bigoplus_{n\geq 0} L_{\C,n}(\lambda)/L_{\C,n-1}(\lambda),$$ which is an $S(\g_{\C})$-module. Note that $\overline{v}_\lambda\in L_{\C,0}(\lambda)$; we abuse notation to write $\overline{v}_\lambda=\gr_0(\overline{v}_\lambda)\in \gr L_{\C}(\lambda)$. It remains true that $\overline{v}_\lambda$ generates $\gr L_{\C}(\lambda)$ as an $S(\g_{\C})$-module, i.e. $ \gr L_{\C}(\lambda)=S(\g_{\C})\overline{v}_\lambda $. Since $S(\g_{\C})$ is commutative, we have $$J(\lambda):=\Ann_{S(\g_{\C})} \gr L_{\C}(\lambda) = \Ann_{S(\g_{\C})} \overline{v}_\lambda \subseteq S(\g_{\C}).$$ The associated variety $\cV_{\g_{\C}} L_{\C}(\lambda)$ is then defined to be the zero set of $J(\lambda)$ in $\g_{\C}^{*}$, i.e. the set of maximal ideals in $S(\g_{\C})$ containing $J(\lambda)$.

The universal enveloping $R$-algebra $U(\g_R)$ corresponding to the Lie ring $\g_R$ also has a PBW-filtration
$$R=U_0(\g_R)\subseteq U_1(\g_R)\subseteq U_2(\g_R)\subseteq \cdots \subseteq U(\g_R),$$ with associated graded algebra isomorphic to $S(\g_R)$. Using this filtration, we may define a corresponding filtration of $L_R(\lambda)$ by setting $$L_{R,n}(\lambda)=U_n(\g_R)\overline{v}_\lambda.$$ We form as before the associated graded module $\gr L_{R}(\lambda)$ and the annihilator $$J_R(\lambda):=\Ann_{S(\g_R)} \gr L_R(\lambda)=\Ann_{S(\g_R)} \overline{v}_\lambda.$$

We may do similar over $\bK$: as before, $U(\g_{\bK})$ has a PBW basis and a PBW-filtration, allowing us define a filtration $$0=L_{p,-1}(\lambda)\subseteq L_{p,0}(\lambda)\subseteq L_{p,1}(\lambda)\subseteq \cdots \subseteq L_{p}(\lambda)$$ of $L_{p}(\lambda)$. We may define the associated graded module $\gr L_p(\lambda):=\bigoplus_{n\geq 0}L_{p,n}(\lambda)/L_{p,n-1}(\lambda)$, an $S(\g_{\bK})$-module. We then define $$J_p(\lambda):=\Ann_{S(\g_{\bK})} \gr L_p(\lambda) = \Ann_{S(\g_{\bK})} \overline{w}_\lambda\subseteq S(\g_{\bK}).$$ The associated variety $\cV_{\g_{\bK}} L_p(\lambda)\subseteq \g_{\bK}^{*}$ of $L_p(\lambda)$ is defined to be the zero set of $J_p(\lambda)$, i.e. the set of maximal ideals of $ S(\g_\bK) $ containing $J_p(\lambda)$.

Finally, since $L_p(\lambda)$ is a $U(\g_{\bK})$-module it restricts to a module over the $p$-centre $Z_p(\g_{\bK})$. We then define $$\cI_p(\lambda):=\Ann_{Z_{p}(\g_{\bK})} L_p(\lambda)=\Ann_{Z_{p}(\g_{\bK})} \overline{w}_\lambda \subseteq Z_p(\g_{\bK}).$$ The $p$-centre $Z_p(\g_\bK)$ identifies with $S(\g_\bK^{(1)})$, where $\cdot\,^{(1)}$ denotes the Frobenius twist.\footnote{More explicitly, $\g_\bK^{(1)}$ is precisely the Lie algebra $\g_\bK$ except with scalar multiplication now given by $a\cdot x=a^{1/p}x$.} Since the Frobenius morphism on $\bK$ is bijective, there is a $\bK$-algebra isomorphism $S(\g_\bK^{(1)})\cong S(\g_\bK)$; the maximal spectrum of $Z_p(\g_\bK)$ can thus be identified with $\g_{\bK}^{*}$, and hence the vanishing set of $\cI_p(\lambda)$ can be identified with a subvariety of $\g_\bK^{*}$. We denote the latter variety by $V_p(\lambda)\subseteq \g_\bK^{*}$.

As discussed earlier, the following result of \cite[Lemma 3.1]{Pr07} is important for what follows.
\begin{lemma}\label{lem: Vp Lchi n0}
Let $\chi\in\g_\bK^{*}$. If $\chi\in V_p(\lambda)$ then $L_p^\chi(\lambda)\neq 0$.
\end{lemma}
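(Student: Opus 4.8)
The plan is to identify $L_p^\chi(\lambda)$ with the fibre at $\chi$ of the finitely generated $Z_p(\g_\bK)$-module $L_p(\lambda)$ and then to apply Nakayama's lemma.

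First I would recall the standard description of the $p$-centre (cf.\ \cite{JanNO}): the assignment $x\mapsto x^p-x^{[p]}$ is additive and $p$-semilinear on $\g_\bK$, and it extends to an algebra isomorphism $S(\g_\bK^{(1)})\xrightarrow{\sim}Z_p(\g_\bK)$; under the resulting identification of the maximal spectrum of $Z_p(\g_\bK)$ with $\g_\bK^{*}$, the maximal ideal corresponding to $\chi\in\g_\bK^{*}$ is exactly $J_\chi=\langle x^p-x^{[p]}-\chi(x)^p\mid x\in\g_\bK\rangle$, and $Z_p(\g_\bK)/J_\chi\cong\bK$. With this dictionary in place, the hypothesis $\chi\in V_p(\lambda)$ says precisely that $\cI_p(\lambda)=\Ann_{Z_p(\g_\bK)}L_p(\lambda)\subseteq J_\chi$.

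Next I would note that $L_p(\lambda)$ is a finitely generated $Z_p(\g_\bK)$-module: it is generated over $U(\g_\bK)$ by $\overline{w}_\lambda$, and $U(\g_\bK)$ is free of finite rank $p^{\dim\g_\bK}$ over its central subalgebra $Z_p(\g_\bK)$ by the PBW theorem. Now suppose, for contradiction, that $L_p^\chi(\lambda)=L_p(\lambda)/J_\chi L_p(\lambda)=0$, i.e.\ $J_\chi L_p(\lambda)=L_p(\lambda)$. Since $L_p(\lambda)$ is a finitely generated module over the commutative ring $Z_p(\g_\bK)$, the determinant-trick form of Nakayama's lemma yields an element $z\in J_\chi$ with $(1-z)L_p(\lambda)=0$; hence $1-z\in\Ann_{Z_p(\g_\bK)}L_p(\lambda)=\cI_p(\lambda)\subseteq J_\chi$, so that $1=(1-z)+z\in J_\chi$, contradicting $Z_p(\g_\bK)/J_\chi\cong\bK$. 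Therefore $L_p^\chi(\lambda)\neq 0$. Equivalently, one can phrase the argument via supports: for a finitely generated module over a Noetherian ring the vanishing locus of the annihilator is exactly the support, so $\chi\in V_p(\lambda)$ places $J_\chi$ in the support of $L_p(\lambda)$, whence the localisation at $J_\chi$, and hence the fibre $L_p^\chi(\lambda)$ (on which every element of $Z_p(\g_\bK)\setminus J_\chi$ already acts as a nonzero scalar), is nonzero.

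I do not anticipate a genuine obstacle; the argument is essentially bookkeeping. The single point requiring care is the identification of the maximal spectrum of $Z_p(\g_\bK)$ with $\g_\bK^{*}$ — in particular tracking the Frobenius twist in $S(\g_\bK^{(1)})\cong Z_p(\g_\bK)$ and the consequent $p$-th powers, so that $J_\chi$ really is the maximal ideal attached to $\chi$ (the generator $x^p-x^{[p]}$ is sent to $\chi(x)^p$, not to $\chi(x)$). Beyond that, finite generation of $L_p(\lambda)$ over $Z_p(\g_\bK)$ and Nakayama's lemma are entirely routine. This is of course the argument of \cite[Lemma 3.1]{Pr07}, reproduced here for completeness.
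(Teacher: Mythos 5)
Your argument is correct. The paper gives no proof of this lemma at all---it is quoted directly from \cite[Lemma 3.1]{Pr07}---and your Nakayama/support argument is precisely the standard proof of that cited result: $L_p(\lambda)=U(\g_\bK)\overline{w}_\lambda$ is finitely generated over $Z_p(\g_\bK)$ because $U(\g_\bK)$ is free of rank $p^{\dim\g_\bK}$ over its $p$-centre, the hypothesis $\chi\in V_p(\lambda)$ says exactly that $\cI_p(\lambda)\subseteq J_\chi$ with $J_\chi$ maximal, and the determinant-trick form of Nakayama then forbids $J_\chi L_p(\lambda)=L_p(\lambda)$. You also correctly flag the one genuinely delicate point, namely that under the identification of the maximal spectrum of $Z_p(\g_\bK)$ with $\g_\bK^{*}$ (via $S(\g_\bK^{(1)})$) the point $\chi$ corresponds to the ideal generated by the elements $x^p-x^{[p]}-\chi(x)^{p}$, which is what makes $J_\chi$ the maximal ideal attached to $\chi$ and renders the rest of the argument routine.
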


In light of Corollary~\ref{cor: surj hom simple}, we therefore wish to understand when $\chi\in\g_\bK^{*}$ in standard Levi form lies in some $V_p(\lambda)$. In the remainder of this subsection, we gather some results to address this question.

In order for these results to apply, however, we need some further assumptions on $R=\cS^{-1}\Z$ (and therefore some further restrictions on $p>0$). First, {\bf we assume that $R$ is such that $S(\g_R)/ J_R(\lambda)$ has no $R$-torsion}. We call this ``Assumption (R2($\lambda$))'', noting that it depends on $\lambda\in\t_\Q^{*}$ (and therefore part of the assumption is that $\lambda$ may be defined as an element of $\t_R^{*}$). We may always make Assumption (R2($\lambda$)) for a given $\lambda\in\t_\Q^{*}$ by extending $\cS$ if necessary (see \cite[\S 2.2]{Pr07}). Under this assumption each $L_{R,n}(\lambda)/L_{R,n-1}(\lambda)$ is torsion-free as an $R$-module and therefore is a free $R$-module of finite rank.

\begin{prop}\label{prop: R to K in VL}
Let $\lambda\in\t_R^{*}$ and make assumption (R2($\lambda)$)). If $\chi:\g_R\to R$ is an $R$-linear map such that $\chi\in\cV_{\g_{\C}} L_{\C}(\lambda)$ (when $\chi$ is viewed as an element of $\g_\C^{*}$), then $\chi\otimes 1\in\g_{\bK}^{*}$ lies in $\cV_{\g_{\bK}} L_{p}(\lambda)$.
\end{prop}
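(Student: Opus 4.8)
The plan is to relate the defining ideals of the associated varieties over $\C$, over $R$, and over $\bK$ via base change, using Assumption (R2($\lambda$)) to ensure that forming the associated graded module commutes with the base change. First I would fix a homogeneous $R$-basis of $\gr L_R(\lambda)$: under (R2($\lambda$)), each $L_{R,n}(\lambda)/L_{R,n-1}(\lambda)$ is free of finite rank over $R$, so choosing bases in each degree gives a basis of $\gr L_R(\lambda)$ consisting of elements $\overline{w}_{i}$. Since $\gr L_R(\lambda)$ is a finitely generated $S(\g_R)$-module (indeed generated by $\overline{v}_\lambda$), the annihilator $J_R(\lambda)=\Ann_{S(\g_R)}\overline{v}_\lambda$ can be described by finitely many $R$-polynomial conditions: $f\in J_R(\lambda)$ iff $f\cdot \overline{v}_\lambda=0$, and expressing $f\cdot\overline{v}_\lambda$ in the chosen $R$-basis yields finitely many $R$-linear equations in the coefficients of $f$.

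Next I would show that base change behaves well on both ends. On the $\C$ side, $\gr L_\C(\lambda)=\gr L_R(\lambda)\otimes_R\C$ and correspondingly $J(\lambda)=J_R(\lambda)\otimes_R\C$ (this uses that $S(\g_R)/J_R(\lambda)$ is torsion-free, hence flat, over $R$, so that $J_R(\lambda)\otimes_R\C\hookrightarrow S(\g_\C)$ has image exactly $\Ann_{S(\g_\C)}\overline{v}_\lambda$); similarly $\gr L_p(\lambda)=\gr L_R(\lambda)\otimes_R\bK$, and the key point is that $J_p(\lambda)=\Ann_{S(\g_\bK)}\overline{w}_\lambda$ contains the image of $J_R(\lambda)$ under $S(\g_R)\to S(\g_\bK)$. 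For the direction we need, I only require this last inclusion: if $f_R\in J_R(\lambda)$, then $f_R\cdot\overline{w}_\lambda=(f_R\cdot\overline{v}_\lambda)\otimes 1=0$ in $L_p(\lambda)$, hence the reduction $\overline{f}$ of $f_R$ lies in $J_p(\lambda)$ (and one checks that the reduction respects the gradings, so this is indeed a statement about $\gr$).

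Now take $\chi:\g_R\to R$ with $\chi\in\cV_{\g_\C}L_\C(\lambda)$, i.e. $\chi$, viewed as a $\C$-point, vanishes on $J(\lambda)$. Since $J(\lambda)=J_R(\lambda)\otimes_R\C$, this means every $f_R\in J_R(\lambda)$ satisfies $f_R(\chi)=0$; but $f_R(\chi)\in R\subseteq\C$, so $f_R(\chi)=0$ as an element of $R$. Reducing mod $p$: for any $f_R\in J_R(\lambda)$, the reduced polynomial $\overline{f_R}$ satisfies $\overline{f_R}(\chi\otimes 1)=\overline{f_R(\chi)}=0$ in $\bK$. Since the reductions $\overline{f_R}$ for $f_R$ ranging over $J_R(\lambda)$ generate an ideal contained in $J_p(\lambda)$, and — crucially — these reductions in fact generate all of $J_p(\lambda)$ up to radical, or at least suffice to cut out the same variety, I conclude $\chi\otimes 1$ vanishes on $J_p(\lambda)$, i.e. $\chi\otimes 1\in\cV_{\g_\bK}L_p(\lambda)$. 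The cleanest way to get the "suffice to cut out the same variety" claim is to pick a finite generating set $f_R^{(1)},\dots,f_R^{(s)}$ of $J_R(\lambda)$ as an $S(\g_R)$-ideal (possible since $S(\g_R)$ is Noetherian); then $\gr L_p(\lambda)=\gr L_R(\lambda)\otimes_R\bK$ is generated over $S(\g_\bK)$ by $\overline{v}_\lambda$ with $\overline{f_R^{(j)}}\cdot\overline{v}_\lambda=0$, and one argues — this is where (R2($\lambda$)) and the freeness of $L_{R,n}(\lambda)/L_{R,n-1}(\lambda)$ are used again — that the relations among $S(\g_\bK)$-generators of $\gr L_p(\lambda)$ are the base changes of those over $R$, so that $J_p(\lambda)$ is generated by the $\overline{f_R^{(j)}}$ together possibly with elements that vanish wherever the $\overline{f_R^{(j)}}$ do.

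The main obstacle is precisely this last point: controlling $J_p(\lambda)=\Ann_{S(\g_\bK)}\overline{w}_\lambda$ in terms of the reduction of $J_R(\lambda)$. A priori the annihilator could grow under reduction mod $p$, but what matters for the variety inclusion is only that it does not \emph{shrink} on the relevant locus — and since $\overline{f_R}\in J_p(\lambda)$ for all $f_R\in J_R(\lambda)$, we automatically have $V_p(\lambda)$-type containment $\cV_{\g_\bK}L_p(\lambda)\subseteq V(\overline{J_R(\lambda)})$, which is the direction we need. So in fact the argument only requires the easy inclusion $\overline{J_R(\lambda)}\subseteq J_p(\lambda)$, giving $\cV_{\g_\bK}L_p(\lambda)\subseteq V(\overline{J_R(\lambda)})$; combined with $\chi\otimes1\in V(\overline{J_R(\lambda)})$ proved above, we are done. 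I would organize the write-up around this observation to avoid the harder flatness bookkeeping, invoking (R2($\lambda$)) only to guarantee $J(\lambda)=J_R(\lambda)\otimes_R\C$ so that the hypothesis $\chi\in\cV_{\g_\C}L_\C(\lambda)$ translates into the vanishing $f_R(\chi)=0$ in $R$.
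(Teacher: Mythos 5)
Your final ``shortcut'' has the containment running in the wrong direction, and this is a genuine gap. Writing $\overline{J_R(\lambda)}$ for the image of $J_R(\lambda)=\Ann_{S(\g_R)}(\overline{v}_\lambda)$ in $S(\g_\bK)$, the easy inclusion $\overline{J_R(\lambda)}\subseteq J_p(\lambda)=\Ann_{S(\g_\bK)}(\overline{w}_\lambda)$ gives $\cV_{\g_\bK}L_p(\lambda)=V(J_p(\lambda))\subseteq V(\overline{J_R(\lambda)})$. Knowing in addition that $\chi\otimes 1\in V(\overline{J_R(\lambda)})$ tells you only that $\chi\otimes 1$ lies in the \emph{larger} set $V(\overline{J_R(\lambda)})$; it says nothing about membership in the smaller set $\cV_{\g_\bK}L_p(\lambda)$, which is what the proposition asserts. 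What you actually need is the reverse control: $J_p(\lambda)\subseteq \m_{\chi\otimes 1}^{\bK}$, i.e.\ that the annihilator of $\overline{w}_\lambda$ does not \emph{grow} beyond (the ideal generated by) $\overline{J_R(\lambda)}$ upon reduction mod $p$. So the ``hard direction'' you gesture at in the middle of your argument --- that $J_p(\lambda)$ is generated by the reductions $\overline{f_R^{(j)}}$ --- cannot be avoided; the last paragraph, which reorganises the write-up precisely to sidestep it, invalidates the proof.

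The correct route is the one you sketch but leave as ``one argues'': assumption (R2($\lambda$)) makes each $L_{R,n}(\lambda)/L_{R,n-1}(\lambda)$ free of finite rank over $R$, which yields $L_{p,n}(\lambda)\cong L_{R,n}(\lambda)\otimes_R\bK$ with the inclusions preserved, hence $\gr\bigl(L_R(\lambda)\otimes_R\bK\bigr)\cong\bigl(\gr L_R(\lambda)\bigr)\otimes_R\bK$ compatibly with the cyclic generator. Since $\gr L_R(\lambda)\cong S(\g_R)/J_R(\lambda)$, right-exactness of $-\otimes_R\bK$ then gives the equality $J_p(\lambda)=\overline{J_R(\lambda)}\,S(\g_\bK)$ (this is exactly the chain of isomorphisms in the paper's proof). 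With that equality in hand, your computation on the $\C$ and $R$ sides --- $J_R(\lambda)\subseteq J(\lambda)\subseteq\m_\chi^{\C}$, so $J_R(\lambda)\subseteq\m_\chi^{R}$ and every $\overline{f_R}$ vanishes at $\chi\otimes 1$ --- does finish the argument, because now all of $J_p(\lambda)$ lies in $\m_{\chi\otimes 1}^{\bK}$. In short: keep the base-change bookkeeping (that is where (R2($\lambda$)) is genuinely used) and drop the final ``only the easy inclusion is needed'' reduction, which is false as stated.
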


\begin{proof}
For this proof, as above, let us enumerate the Chevalley basis $\cB$ of $\g_\C$ as $x_1,\ldots,x_k$; by abuse of notation, we also use this convention for analogous bases of $\g_R$ and $\g_\bK$. In particular, the algebras $S(\g_\C)$, $S(\g_R)$ and $S(\g_\bK)$ all have bases consisting of elements  $$x_1^{a_1}\cdots x_k^{a_k}\qquad \mbox{for} \qquad a_1,\ldots,a_k\geq 0.$$

To prove the result, we first prove the existence of the following chain of isomorphisms. Each of the vertical maps either sends a monomial $x_1^{a_1}\cdots x_k^{a_k}$ to the monomial $x_1^{a_1}\cdots x_k^{a_k}$ (possibly in a different algebra) or sends a monomial $x_1^{a_1}\cdots x_k^{a_k}$ to its image under the action map (so either $x_1^{a_1}\cdots x_k^{a_k}(\overline{v}_\lambda\otimes 1)$ or $(x_1^{a_1}\cdots x_k^{a_k}\overline{v}_\lambda)\otimes 1$). The horizontal map is the obvious one, but will be discussed more extensively later. In this diagram, $\kappa$ is the natural isomorphism $S(\g_R)\otimes_R \bK\to S(\g_\bK)$ and $\iota$ is the map $\Ann_{S(\g_R)}(\overline{v}_\lambda)\otimes_R \bK\to S(\g_R)\otimes_R \bK$ induced from the inclusion $\Ann_{S(\g_R)}(\overline{v}_\lambda) \hookrightarrow S(\g_R)$.
\begin{eqnarray}
	\label{e: Sym alg isom}
	\begin{array}{c}\xymatrix{
			\frac{S(\g_\bK)}{\Ann_{S(\g_\bK)}(\overline{w}_\lambda)}  \ar@{<-}[d]^{(1)} & & \frac{S(\g_\bK)}{\kappa\iota(\Ann_{S(\g_R)}(\overline{v}_\lambda)\otimes_R \bK)} \ar@{<-}[d]^{(6)} \\
			\frac{S(\g_R)\otimes_R\bK}{\Ann_{S(\g_R)\otimes_R \bK}(\overline{v}_\lambda\otimes 1)} \ar@{->}[dd]^{(2)} & & \frac{S(\g_R)\otimes_R\bK}{\iota(\Ann_{S(\g_R)}(\overline{v}_\lambda)\otimes_R \bK)} \ar@{->}[d]^{(5)}
			\\ 
			& &  \frac{S(\g_R)}{\Ann_{S(\g_R)}(\overline{v}_\lambda)}\otimes_R \bK \ar@{->}[d]^{(4)}
			\\
			\gr(L_R(\lambda)\otimes_R \bK) \ar@{->}[rr]^{(3)} & & (\gr L_R(\lambda))\otimes_R \bK 
		}
	\end{array}
\end{eqnarray}

That the maps labelled (1) and (6) are isomorphisms follows easily from the fact that $\kappa$ is an isomorphism and clearly sends $\Ann_{S(\g_R)\otimes_R \bK}(\overline{v}_\lambda\otimes 1)$ to $\Ann_{S(\g_\bK)}(\overline{w}_\lambda)$ (here $\overline{v}_\lambda\otimes 1$ is an element of $\gr(L_R(\lambda)\otimes_R \bK)$, not an element of $\gr(L_R(\lambda))\otimes_R \bK$) and sends $\iota(\Ann_{S(\g_R)}(\overline{v}_\lambda)\otimes_R \bK)$ to $\kappa\iota(\Ann_{S(\g_R)}(\overline{v}_\lambda)\otimes_R \bK)$. Furthermore, that the maps labelled (2) and (4) are isomorphisms follows immediately from the description of these maps and the first isomorphism theorem (recalling that $\gr(L_R(\lambda)\otimes_R \bK)$ and $\gr(L_R(\lambda))$ are generated by $\overline{v}_\lambda\otimes 1$ and $\overline{v}_\lambda$ respectively). And the map (5) is an isomorphism due to the right-exactness of $-\otimes_R\bK$ and the exactness of the sequence of $R$-modules $$0\to \Ann_{S(\g_R)}(\overline{v}_\lambda)\to S(\g_R)\to S(\g_R)/\Ann_{S(\g_R)}(\overline{v}_\lambda)\to 0.$$

What remains is therefore to show that the map labelled (3) is an isomorphism. As in \cite{Pr07}, this follows from our assumption that each $L_{R,n}(\lambda)/L_{R,n-1}(\lambda)$ is torsion-free over $R$, and thus free of finite rank. Indeed, this assumption is enough to show that $L_{p,n}(\lambda)\cong L_{R,n}(\lambda)\otimes_R \bK$ and that the inclusion $L_{R,n-1}(\lambda)\hookrightarrow L_{R,n}(\lambda)$ induces an injection $L_{R,n-1}(\lambda)\otimes_R \bK\to L_{R,n}(\lambda)\otimes_R \bK$, which together imply the result. The map obtained via this argument sends $$x_1^{a_1}\cdots x_k^{a_k}\overline{v}_\lambda\otimes 1\mapsto x_1^{a_1}\cdots x_k^{a_k}\overline{v}_\lambda \otimes 1.$$

Putting all this together yields an isomorphism $\frac{S(\g_\bK)}{\Ann_{S(\g_\bK)}(\overline{w}_\lambda)}  \xrightarrow{\sim } \frac{S(\g_\bK)}{\kappa\iota(\Ann_{S(\g_R)}(\overline{v}_\lambda)\otimes_R \bK)}$ induced from the identity map $S(\g_\bK)\to S(\g_\bK)$. Therefore, $$\Ann_{S(\g_{\bK})}(\overline{w}_\lambda)=\kappa\iota(\Ann_{S(\g_R)}(\overline{v}_\lambda)\otimes_R \bK).$$

Now, given an $R$-linear map $\chi:\g_R\to R$, we define $$\m_\chi^{\C}=\langle x-\chi(x) \mid x\in \g_{\C}\rangle \subseteq S(\g_{\C}),$$
$$\m_\chi^{R}=\langle x-\chi(x) \mid x\in \g_R\rangle \subseteq S(\g_{R}), \quad \mbox{and} $$
$$\m_\chi^{\bK}=\langle x-(\chi\otimes 1)(x) \mid x\in \g_{\bK}\rangle \subseteq S(\g_{\bK}).$$ The first and last of these are maximal ideals, which correspond to the points $\chi\in\g_{\C}^{*}$ and $\chi\otimes 1\in \g_{\bK}^{*}$. The statement that $\chi\in\cV_{\g_\C} L_{\C}(\lambda)$ is therefore equivalent to the statement that $$\Ann_{S(\g_{\C})}(\overline{v}_\lambda)\subseteq \m_\chi^{\C}.$$

This then implies that $$\Ann_{S(\g_R)}(\overline{v}_\lambda)=\Ann_{S(\g_{\C})}(\overline{v}_\lambda)\cap S(\g_R)\subseteq \m_\chi^{\C}\cap S(\g_R)=\m_\chi^R,$$ and therefore that $$\iota(\Ann_{S(\g_R)}(\overline{v}_\lambda)\otimes_R \bK) \subseteq   \overline{\iota}(\m_\chi^R\otimes_R \bK),$$ where $\overline{\iota}$ is induced from the inclusion $\m_\chi^R\subseteq S(\g_R)$. Applying $\kappa$ to both sides gives $$\Ann_{S(\g_\bK)}(\overline{w}_\lambda)=\kappa\iota(\Ann_{S(\g_R)}(\overline{v}_\lambda)\otimes_R \bK) \subseteq   \kappa\overline{\iota}(\m_\chi^{R}\otimes_R \bK)=\m_\chi^\bK.$$ This precisely means that $\chi\otimes 1\in\cV_{\g_{\bK}} L_{p}(\lambda)$.

\end{proof}

For the next result we also need to assume {\bf that there exist algebraically independent homogeneous elements $y_1,\ldots,y_s\in S(\g_R)/J_R(\lambda)$ and elements $v_1,\ldots,v_S\in  S(\g_R)/J_R(\lambda)$ such that} $$S(\g_R)/J_R(\lambda)=R[y_1,\ldots,y_s] v_1 + \cdots + R[y_1,\ldots,y_s] v_S.$$ As in \cite[\S 2.3]{Pr07}, we may make this assumption for a given $\lambda\in\t_\Q^{*}$ by expanding $\cS$ as necessary. We label this assumption ``Assumption (R3($\lambda$))'', noting that it depends on $\lambda\in\t_\Q^{*}$ (and therefore part of the assumption is that $\lambda$ may be defined as element of $\t_R^{*}$).

The following result makes up part of the proof of \cite[Theorem 3.1]{Pr07}. Note that, although our current assumptions are not enough to satisfy all the assumptions of that theorem, they are sufficient for the result we need. This result says as follows.

\begin{prop}\label{prop: irred comp Vp(l)}
Assume that $\lambda\in\t_\Q^{*}$ and that $R$ satisfies assumptions (R1), (R2($\lambda$)) and (R3($\lambda$)) (so in fact $\lambda\in\t_R^{*})$. Then every irreducible component of $\cV_{\g_{\bK}} L_{p}(\lambda)$ of maximal dimension coincides with an irreducible component of $V_p(\lambda)$ of maximal dimension.
\end{prop}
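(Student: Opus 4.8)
The plan is to reproduce the relevant portion of the argument for \cite[Theorem~3.1]{Pr07}; the heart of it is a comparison of $\gr L_p(\lambda)$ as a module over $S(\g_\bK)$ with $L_p(\lambda)$ as a module over the $p$-centre $Z_p(\g_\bK)$, carried out through the PBW filtration. First I would equip $Z_p(\g_\bK)$ with the filtration induced from the PBW filtration of $U(\g_\bK)$. Since the symbol of $x^p-x^{[p]}$ is $x^p\in S^p(\g_\bK)$, and since $(\sum a_i x_i)^p=\sum a_i^p x_i^p$ in characteristic $p$, the associated graded algebra $\gr Z_p(\g_\bK)$ is exactly the polynomial subalgebra $\bK[x_1^p,\ldots,x_n^p]$ of $S(\g_\bK)$ (for a basis $x_1,\ldots,x_n$ of $\g_\bK$), over which $S(\g_\bK)$ is finite and free of rank $p^n$. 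Under the identification $Z_p(\g_\bK)\cong S(\g_\bK^{(1)})$ of Subsection~\ref{ss: Assoc Var}, the passage to symbols $Z_p(\g_\bK)\to\gr Z_p(\g_\bK)$ is the Frobenius $S(\g_\bK^{(1)})\to S(\g_\bK)$; hence the finite morphism $\operatorname{Spec}S(\g_\bK)\to\operatorname{Spec}\gr Z_p(\g_\bK)$ is, after both spectra are identified with $\g_\bK^{*}$, a bijective finite morphism which therefore carries $\cV_{\g_\bK}L_p(\lambda)=\operatorname{Supp}_{S(\g_\bK)}(\gr L_p(\lambda))$ onto $\operatorname{Supp}_{\gr Z_p(\g_\bK)}(\gr L_p(\lambda))$, preserving dimensions and inducing a bijection on irreducible components.

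Next I would establish, inside $\operatorname{Spec}\gr Z_p(\g_\bK)$, the containment $\operatorname{Supp}_{\gr Z_p(\g_\bK)}(\gr L_p(\lambda))\subseteq V(\gr\cI_p(\lambda))$, where $\gr\cI_p(\lambda)$ is the ideal of symbols of the filtered ideal $\cI_p(\lambda)=\Ann_{Z_p(\g_\bK)}(\overline{w}_\lambda)$. This is a routine symbol computation: if $z\in\cI_p(\lambda)$ lies in filtration degree $d$, then $z\overline{w}_\lambda=0$ forces the symbol $\sigma_d(z)$ to annihilate $\overline{w}_\lambda$ in $\gr L_p(\lambda)$, so $\gr\cI_p(\lambda)\subseteq\Ann_{\gr Z_p(\g_\bK)}(\gr L_p(\lambda))$. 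On the other hand, passing to associated graded does not change Krull dimension, either for the finitely generated commutative $\bK$-algebra $Z_p(\g_\bK)/\cI_p(\lambda)$ (the image of $Z_p(\g_\bK)$ in $\End_\bK L_p(\lambda)$) or for the finitely generated module $L_p(\lambda)$ over it with its good filtration $L_{p,n}(\lambda)$, so
$$\dim V(\gr\cI_p(\lambda))=\dim\big(Z_p(\g_\bK)/\cI_p(\lambda)\big)=\dim V_p(\lambda)=\dim\operatorname{Supp}_{\gr Z_p(\g_\bK)}(\gr L_p(\lambda)).$$
Combining these, every irreducible component of $\cV_{\g_\bK}L_p(\lambda)$ of maximal dimension, transported via the Frobenius of the first paragraph, is an irreducible component of $V(\gr\cI_p(\lambda))$ of maximal dimension.

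It then remains to identify the maximal-dimensional components of $V(\gr\cI_p(\lambda))$ with those of $V_p(\lambda)$ itself. Here I would use that the Rees algebra of the filtered pair $(Z_p(\g_\bK),\cI_p(\lambda))$ realizes $V(\gr\cI_p(\lambda))$ as the flat limit (associated cone) of $V_p(\lambda)$; since all the generators $x^p-x^{[p]}$ of $Z_p(\g_\bK)$ have the same filtration degree $p$, this is the degeneration of $V_p(\lambda)$ under the $\G_m$-action on $\g_\bK^{*}$ by scalar dilations. Following \cite{Pr07}, one checks that the maximal-dimensional part of $V_p(\lambda)$ is already stable under this $\G_m$-action --- using the homogeneity of $\cI_p(\lambda)$ for the grading on $Z_p(\g_\bK)$ and the good behaviour of the base change $S(\g_R)/J_R(\lambda)\otimes_R\bK$ guaranteed by (R2($\lambda$)) and (R3($\lambda$)), exactly as in the proof of Proposition~\ref{prop: R to K in VL} --- so that $V_p(\lambda)$ and its associated cone $V(\gr\cI_p(\lambda))$ share the same maximal-dimensional components. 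Together with the previous paragraph this gives the claim.

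The main obstacle is this last step. The containment $\cV_{\g_\bK}L_p(\lambda)\subseteq V(\gr\cI_p(\lambda))$ and the dimension count are essentially formal, but controlling the flat degeneration of $V_p(\lambda)$ --- equivalently, verifying that its top-dimensional components are already conical, so that they pass unchanged into the associated cone --- is the delicate point, and is precisely where the hypotheses on $R$ and $\lambda$ are genuinely used. The Frobenius bookkeeping, the symbol calculus, and the invariance of Krull dimension under $\gr$ are all standard.
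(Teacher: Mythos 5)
The first thing to note is that the paper does not prove Proposition~\ref{prop: irred comp Vp(l)} at all: it is quoted as being ``part of the proof of \cite[Theorem 3.1]{Pr07}'', so your attempt has to be measured against Premet's argument rather than against anything in this paper. Your first two paragraphs are fine and do reproduce the formal part of that comparison: the identification of $\gr Z_p(\g_\bK)$ with $\bK[x_1^p,\ldots,x_n^p]$, the finite homeomorphism carrying $\cV_{\g_\bK}L_p(\lambda)=\operatorname{Supp}_{S(\g_\bK)}(\gr L_p(\lambda))$ onto the support over $\gr Z_p(\g_\bK)$, the inclusion $\gr\cI_p(\lambda)\subseteq\Ann_{\gr Z_p(\g_\bK)}(\gr L_p(\lambda))$, and the equality of the various Krull dimensions are all standard and correctly argued (modulo routine bookkeeping of the identifications of all these spectra with $\g_\bK^{*}$).

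The genuine gap is the third step, and you have not closed it. After your second paragraph you know only that every top-dimensional component of $\cV_{\g_\bK}L_p(\lambda)$ is a top-dimensional component of the zero locus of $\gr\cI_p(\lambda)$, i.e.\ of the associated cone of $V_p(\lambda)$; the proposition demands that it be a component of $V_p(\lambda)$ itself. This implication is false for a general Rees degeneration: the special fibre has the same dimension as $V_p(\lambda)$ but in general completely different components (already for a plane parabola the associated cone is a line which is not a component of the curve), so a genuinely non-formal input is needed here. You propose to supply it by asserting ``the homogeneity of $\cI_p(\lambda)$'', equivalently that the top-dimensional part of $V_p(\lambda)$ is conical, and you claim this follows from (R2($\lambda$)) and (R3($\lambda$)) ``exactly as in the proof of Proposition~\ref{prop: R to K in VL}''. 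Neither claim is substantiated: homogeneity of $\cI_p(\lambda)$ is nowhere established (and if it were known, your entire degeneration discussion would be vacuous, since then $V_p(\lambda)$ would equal its associated cone), while assumptions (R2($\lambda$)) and (R3($\lambda$)) enter on the \emph{other} side of the comparison --- they control the base change $\gr(L_R(\lambda)\otimes_R\bK)\cong(\gr L_R(\lambda))\otimes_R\bK$ and the dimension statement $\dim\cV_{\g_\bK}L_p(\lambda)=\tfrac{1}{2}\dim\O_\lambda$ (cf.\ Proposition~\ref{prop: R to K in VL} and \cite[3.3(2)]{Pr07}) --- and say nothing about conicality of the support over the $p$-centre. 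So the one step that carries the content of the proposition is exactly the step you defer to ``following \cite{Pr07}'', which is also the step this paper itself outsources; as written, the proposal is an unproved reduction rather than a proof.
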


We note for reference that the maximal dimension referenced in each of these propositions is $\frac{1}{2}\dim\O_\lambda$.

\subsection{Type $A$}\label{ss: Type A}

From now on, we only consider the case where $G_\C=\GL_N(\C)$ and we start by only making assumption (R1) (so, in particular, $p\nmid N$). Note that in this setting, for $F\in \{\C,R,\F_p,\bK\}$, we have that $\b_\F=\b_N(\F)\subseteq \gl_N(\F)$ is the Lie subalgebra (or subring) consisting of upper triangular matrices and $\t_\F=\t_N(\F)\subseteq \gl_N(\F)$ is the Lie subalgebra (or subring) consisting of diagonal matrices. We also set $G_\F=\GL_N(\F)$, $B_\F\subseteq G_\F$ as the subgroup of invertible upper triangular matrices with entries in $\F$, and $T_\F\subseteq G_\F$ as the subgroup of invertible diagonal matrices with entries in $\F$.

For each $i=1,\ldots,N$, we define $\varepsilon_i:\t_\F\to \F$ as $$\varepsilon_i:\begin{pmatrix}
a_1 & 0 & \cdots & 0 \\
0 & a_2 & \cdots & 0 \\
\vdots & \vdots & \ddots & \vdots  \\
0 & 0 & \cdots & a_N \\
\end{pmatrix}\mapsto a_i.$$
Denote by $\t_\F^{*}$ the $\F$-module of $\F$-linear maps $\t_\F\to \F$; the elements $\varepsilon_i\in\t_\F^{*}$ form a free $\F$-basis of $\t_\F^{*}$. We may describe the root system $\Phi$ and the subsets of positive and simple roots in $\Phi$ in terms of the $\varepsilon_i$ as follows:
$$\Phi=\{\varepsilon_{i}-\varepsilon_{j}\mid 1\leq i\neq j\leq N\},$$
$$\Phi^{+}=\{\varepsilon_{i}-\varepsilon_{j}\mid 1\leq i< j\leq N\},$$
$$\Pi=\{\varepsilon_{i}-\varepsilon_{i+1}\mid 1\leq i< N\}.$$ Note in particular that $e_{ij}$ is a root vector in $\cB$ corresponding to the root $\varepsilon_i-\varepsilon_j\in\Phi$ (for $i\neq j$). Define $\rho\in\t_\F^{*}$ by $$\rho:=-\varepsilon_1 - 2\varepsilon_2 - \cdots - N\varepsilon_N.$$ The Weyl group $W=S_N$ acts on  $\t_\F^{*}$ in such a way that $\sigma\in S_N$ sends $\varepsilon_i$ to $\varepsilon_{\sigma(i)}$ for all $i=1,\ldots,N$. We also define a dot-action of $W$ on  $\t_\F^{*}$ as $$ w\cdot\lambda = w(\lambda+\rho)-\rho.$$ Note that the $\rho$ considered here is a shift of the $\rho$ defined in Subsection~\ref{ss: Prelims}, but the corresponding dot-actions nevertheless coincide.

Let $\underline{p}=(p_1,p_2,\ldots,p_r)$ be a partition of $N$ with $p_r\geq p_{r-1}\geq \cdots \geq p_1 > 0$. We set $\pi$ to be a left-justified pyramid such that the $i$-th row contains $p_i$ boxes. We label these boxes by the natural numbers $1,\ldots,N$ such that the $p_i$ boxes in the $i$-th row are labelled $p_1+p_2+\cdots + p_{i-1}+1, p_1+p_2+\cdots +p_{i-1}+2,\cdots, p_1+p_2+\cdots + p_{i-1}+p_{i} $ in order. For example, if $\underline{p}=(1,2,2,4)$ then the pyramid $\pi$ is (with labelling): 
\begin{center}
\begin{ytableau}
	1 & \none & \none & \none \\
	2 & 3 & \none & \none \\
	4 & 5 & \none & \none \\
	6 & 7 & 8 & 9 \\
\end{ytableau}
\end{center}
Given a pyramid $\pi$ and a natural number $i\in\{1,\ldots,N\}$, we denote by $\row(i)$ the number of the row containing the $i$-th box (where we count rows from top to bottom, starting with row 1) and by $\col(i)$ the number of the column containing the $i$-th box (where we count columns from left to right, starting with column 1). In the above pyramid, for example, we have $\row(3)=2$, $\row(5)=3$, $\col(5)=2$ and $\col(9)=4$.

We associate to a pyramid $\pi$ the nilpotent element $$e_\pi:=\sum_{\substack{\row(i)=\row(j) \\ \col(i)=\col(j)-1 }}e_{ij}.$$ In the above example, we have $$e_\pi=e_{23} + e_{45} + e_{67}+e_{78}+e_{89}.$$

Note that $\gl_N(\C)$ is equipped with a $\GL_N(\C)$-equivariant symmetric bilinear form $\langle -,-\rangle:\gl_N(\C)\times \gl_N(\C)\to \C $, i.e. the trace form $(x,y)\mapsto \tr(xy)$. This defines a $\GL_n(\C)$-equivariant isomorphism $\gl_N(\C)\to \gl_N(\C)^{*}$ given by $x\mapsto \langle x, -\rangle$. Under this isomorphism $e_\pi$ maps to an element $\chi_\pi\in \gl_N(\C)^{*}$ such that $$\chi_\pi(e_{ij})=\twopartdef{1}{\row(i)=\row(j)\mbox{ and }\col(j)=\col(i)-1,}{0}{\mbox{not.}} $$ In particular, $\chi_\pi$ is in standard Levi form (recalling that the $e_{i, i+1}$ are root vectors for the simple roots for $\gl_N(\C)$).

Given an integral domain $\F$, an {\bf $\F$-filling} of $\pi$ is a function $A:\{1,\ldots,N\}\to \F$. To represent this visually, we view the scalar $a_i:=A(i)$ as being placed into the box of $\pi$ which has label $i$. For example, with $\pi$ as above, we visualise $A:\{1,2,\ldots,9\}\to \F$ as  
\begin{center}
\begin{ytableau}
	a_1 & \none & \none & \none \\
	a_2 & a_3 & \none & \none \\
	a_4 & a_5 & \none & \none \\
	a_6 & a_7 & a_8 & a_9 \\
\end{ytableau}
\end{center}
We write $\Tab_\F(\pi)$ for the set of all $\F$-fillings of $\pi$ (when visualised as above, we often call an $\F$-filling a {\bf tableau}, which explains this notation). Given $A\in\Tab_\F(\pi)$, define $$\lambda_A:=a_1\varepsilon_1 + a_2\varepsilon_2 + \cdots + a_N \varepsilon_N\in\t_\F^{*}.$$

We now list some properties that a tableaux (or pair of tableaux) may have. These of course correspond to properties of $\lambda_A$ in a precise sense, but the visual perspective makes them easier to describe and understand.

We say that $A, A'\in\Tab_{\F}(\pi)$ are {\bf row-equivalent} if $A'$ can be obtained from $A$ by permuting the entries in each row, i.e. if there exists $\sigma\in S_{p_1}\times S_{p_2}\times \cdots\times S_{p_r}$ such that $a_i'=a_{\sigma(i)}$.

If $\F=\C$, which we can equip with a partial order $\leq$ by saying that $a\leq b$ if and only if $b-a\in\Z_{\geq 0}$, then we call $A\in\Tab_\C(\pi)$ {\bf column-strict} if it is strictly increasing going up columns (with respect to this partial order). For example, with $A$ as above this requires $$a_1>a_2>a_4>a_6\quad\mbox{and} \quad a_3>a_5>a_7.$$

We also call $A\in \Tab_\C(\pi)$ {\bf row-standard} if the entries in each row are non-decreasing (with respect to this partial order) from left to right, i.e. for each $k=1,\ldots,n$ and for any $i,j\in\{p_1+\cdots +p_{k-1}+1,\ldots, p_1+\cdots +p_{k} \}$ with $i<j$ we have $a_i\not> a_j$. If $A\in \Tab_\Z(\pi)$, this is equivalent to requiring $$a_{p_1+\cdots +p_{k-1}+1}\leq a_{p_1+\cdots +p_{k-1}+2}\leq \cdots \leq a_{p_1+\cdots +p_{k-1}+p_{k}}$$ for each $k=1,\ldots,r$.

Finally. we call $A\in\Tab_\F(\pi)$ {\bf column-connected} if $a_i=a_j+1$ whenever the box with label $i$ lies directly above the box with label $j$. In the above example, this corresponds to 
$$a_1=a_2+1=a_4+2=a_6+3\quad\mbox{and} \quad a_3=a_5+1=a_7+2.$$

Let us now discuss some of the representation theory of $U_{\chi_\pi}(\g_{\bK})$ through this language. Fix $A\in\Tab_{\F_p}(\pi)$. We define $\bK_A$ to be the 1-dimensional $U_0(\b_{\bK})$-module on which $\n_\bK^{+}$ acts as zero and $\t_\bK$ acts via $\lambda_A-\rho$. We may then define the $U_{\chi_\pi}(\g_\bK)$-module  $Z_{\chi_\pi}(A):=U_{\chi_\pi}(\g_\bK)\otimes_{U_0(\b_{\bK})} \bK_A$; this is just another way of labelling the baby Verma module $Z_{\chi_\pi}(\lambda_A-\rho)$ discussed earlier, but reflecting that in this setting we prefer to write things in terms of tableau. As already observed (recalling that $\chi_\pi$ is in standard Levi form), each baby Verma module $Z_{\chi_\pi}(A)$ has a unique simple quotient which we now denote by $L_{\chi_\pi}(A)$. Each simple $U_{\chi_\pi}(\g_{\bK})$-module is of this form, and 
\begin{equation}\label{e: isom L}
L_{\chi_\pi}(A)\cong L_{\chi_\pi}(A') \quad \iff \quad A\mbox{ and }A'\mbox{ are row-equivalent}.
\end{equation}

Recall from Premet's Theorem \cite{PrKW} that every $U_{\chi_\pi}(\g_\bK)$-module has dimension divisible by $p^{\dim \O_\pi/2}$, where $\O_\pi=G\cdot\chi_\pi$. Due to \cite[Theorem 1.1]{PT21} there is always a $U_{\chi_\pi}(\g_\bK)$-module of dimension exactly $p^{\dim \O_\pi/2}$, which we call a {\bf minimal-dimensional module}. (The result in \cite[Theorem 1.1]{PT21} is proved for all types, but in fact follows easily in type $A$ from the fact that all orbits are Richardson.) The following result due to \cite[Theorem 1.1]{GT} characterises the minimal-dimensional modules for $U_{\chi_\pi}(\gl_N(\bK))$.

\begin{theorem}
The minimal-dimensional $U_{\chi_\pi}(\g_\bK)$-modules are precisely the simple modules $L_{\chi_\pi}(A)$ for those $A\in\Tab_{\F_p}(\pi)$ which are row-equivalent to some column-connected $A'\in\Tab_{\F_p}(\pi)$. 
\end{theorem}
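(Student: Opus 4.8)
The plan is to obtain the statement as a direct transcription of \cite[Theorem 1.1]{GT} into the notation set up above, so that the only real content is checking that conventions match. First I would recall the precise form of Goodwin and Topley's classification: it describes the minimal-dimensional $U_{\chi_\pi}(\gl_N(\bK))$-modules via a combinatorial condition on the tableau $A$ labelling the pyramid $\pi$, and one must confirm that their conventions agree with ours -- the orientation and box-labelling of $\pi$, the shift by $\rho$ built into the definition of $Z_{\chi_\pi}(A)$ (the text already flags that the $\rho$ of \S\ref{ss: Type A} differs from that of \S\ref{ss: Prelims}), the partial order on $\C$ and its reduction modulo $p$, and whether fillings are taken in $\F_p$, $\Z$ or $\C$. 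Any discrepancy is absorbed by transporting along the evident bijection between our $\F_p$-fillings of $\pi$ and theirs.

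If \cite{GT} already phrase the criterion in terms of column-connectedness, then nothing further is needed beyond \eqref{e: isom L}. If instead their criterion is a different but equivalent combinatorial condition, the substantive step is to show that, for a given row-equivalence class of $\F_p$-fillings of $\pi$, containing a column-connected representative is equivalent to their condition. This is a finite combinatorial matching problem: reading the pyramid column by column, one checks that the forced vertical strings $a, a-1, a-2, \dots$ in each column can be chosen compatibly with the row multisets, using that the columns of $\pi$ overlap in initial segments (since $\pi$ is left-justified with weakly increasing row lengths).

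Finally I would assemble the pieces. By \eqref{e: isom L} the isomorphism class of $L_{\chi_\pi}(A)$ depends only on the row-equivalence class of $A$, so being minimal-dimensional is a property of that class; by Premet's theorem \cite{PrKW} and \cite[Theorem 1.1]{PT21} (or parabolic induction, all orbits in type $A$ being Richardson) the minimal dimension is exactly $p^{\dim\O_\pi/2}$, so ``minimal-dimensional'' unambiguously means ``of dimension $p^{\dim\O_\pi/2}$''. Combining this with the classification and the combinatorial equivalence of the previous paragraph yields the theorem.

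The main obstacle is the convention-matching: getting the $\rho$-shift, the left- versus right-justification of $\pi$, and the $\C$/$\Z$/$\F_p$-filling passages exactly right, together with -- if needed -- the combinatorial equivalence between \cite{GT}'s criterion and column-connectedness. Everything else is bookkeeping, and if \cite{GT} state the result verbatim in these terms the proof reduces to a citation plus the remark about \eqref{e: isom L}.
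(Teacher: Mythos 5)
Your proposal matches the paper exactly: the paper gives no independent proof of this theorem, but simply quotes \cite[Theorem 1.1]{GT} (stated there in terms of the tableau combinatorics for the pyramid $\pi$), so reducing the statement to that citation together with the convention-matching and the remark about (\ref{e: isom L}) is precisely what the paper does. No further argument is needed.
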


Fix now a minimal-dimensional $U_{\chi_\pi}(\g_\bK)$-module $L$; by the above theorem, there exists $A\in\Tab_{\F_p}(\pi)$ row-equivalent to a column-connected $A'\in\Tab_{\F_p}(\pi)$ such that $L\cong L_{\chi_\pi}(A)$. By (\ref{e: isom L}), we have $L_{\chi_\pi}(A)\cong L_{\chi_\pi}(A')$, and so we may in fact assume that $A$ is itself column-connected.

Our goal now is to lift $A$ to an element $\widehat{A}$ of $\Tab_{\Z}(\pi)$ which has some particular properties. To do so, let the entries of $A$ be $a_1,\ldots,a_N\in \F_p$, which we view as integers lying between $0$ and $p-1$. For the entries of the first column of $\widehat{A}$, choose integers $\hat{a}_1,\hat{a}_{p_1+1},\ldots,\hat{a}_{p_1+p_2+\cdots + p_{r-1}+1}$ which coincide with $a_1,a_{p_1+1},\ldots,a_{p_1+p_2+\cdots + p_{r-1}+1}$ modulo $p$ and which satisfy $$\hat{a}_1=\hat{a}_{p_1+1}+1 = \hat{a}_{p_1+p_2+1}+2 = \cdots = \hat{a}_{p_1+\cdots + p_{r-1}+1}+r-1$$ (this is possible since $A$ is column-connected). To construct the entries for the second column of $\widehat{A}$ we lift the entries of the second column of $A$ in a similar way as for the first column,\footnote{If $p_1\geq 2$, these entries will be $a_2,a_{p_1+2},\cdots,a_{p_1+p_2+\cdots + p_{r-1}+2}$, but the notation will get more convoluted if $p_1=1$ and so going forward we do not write the elements so explicitly.} but we do so in such a way that all entries in the second column of $\widehat{A}$ are greater than all entries in the first column of $\widehat{A}$. Since we may shift all entries in a column of $\widehat{A}$ by a multiple of $p$ without affecting the column-connectedness of $\widehat{A}$, it is straightforward to see that we can indeed do this. By repeating this process to construct each column of $\widehat{A}$ (so that all entries in each column are greater than all entries in all of the preceding columns) we obtain a column-connected and row-standard element of $\Tab_\Z(\pi)$ which restricts to $A$ modulo $p$. In other words, we have lifted $A$ to $\widehat{A}\in\Tab_{\Z}(\pi)$ such that $$\widehat{A} \mbox{ is column-connected and row-standard.}$$ We now always assume that $\widehat{A}$ satisfies these properties.

For example, continuing with our running example and supposing $p=7$, consider the following $A\in\Tab_{\F_p}(\pi)$:
\begin{center}
\begin{ytableau}
	2 & \none & \none & \none \\
	1 & 6 & \none & \none \\
	0 & 5 & \none & \none \\
	6 & 4 & 1 & 0 \\
\end{ytableau}
\end{center}
Such $A$ is column-connected. Applying the above process to form $\widehat{A}$ (noting that such outcome is not unique), we may pick $\widehat{A}$ as 
\begin{center}
\begin{ytableau}
	2 & \none & \none & \none \\
	1 & 13 & \none & \none \\
	0 & 12 & \none & \none \\
	-1 & 11 & 15 & 21 \\
\end{ytableau}
\end{center}

Since $\pi$ is left-justified, $\widehat{A}$ is semi-standard in the language of \cite{Br11}. This means in particular that when one applies the Robinson-Schensted algorithm to the tuple $(\hat{a}_1,\hat{a}_2,\ldots,\hat{a}_N)\in\Z^N$, one gets a left-justified pyramid of shape $\pi$ by \cite[Lemma 3.3]{Br11}. Note that the element of $\t_{\Z}^{*}$ that we refer to as $\lambda_{\widehat{A}}$ is denoted $\rho(\widehat{A})$ in \cite{Br11}, as that paper uses a different convention to number the boxes in a pyramid. Combining \cite[Theorems 2.2, 3.1, and 3.2]{Br11},\footnote{We cite to three theorems in \cite{Br11} for ease of reference and consistency of notation, but each of these theorems predate \cite{Br11}. Indeed, \cite[Theorem 2.2]{Br11} is \cite[Theorem 7.9]{BK} ,\cite[Theorem 3.1]{Br11} is \cite[Theorem 3.1]{PrEnv}, and \cite[Theorem 3.2]{Br11} is a special case of \cite[Theorem 5.1.1]{Lo1}} we conclude that $$\cVA'(I(\lambda_{\widehat{A}}-\rho))=\overline{\O}_\pi.$$  Furthermore, tracing through the argument shows that $I(\lambda_{\widehat{A}}-\rho)$ is completely prime.\footnote{It actually shows (through \cite[Theorem 3.2]{Br11}) that it is a Losev-Premet ideal in the language of \cite{GTW}, and these are automatically completely prime.}

The goal for the remainder of this subsection is to show that $L_p^{\chi_\pi}(\lambda_{\widehat{A}}-\rho)\neq 0$. By Corollary~\ref{cor: gl v sl}, it suffices to show the analogous result over $\sl_N(\bK)$. To avoid cluttering the notation, we continue to write $\chi_\pi$ in place of $\chi_\pi'$ and $\lambda_{\widehat{A}}-\rho$ in place of $(\lambda_{\widehat{A}}-\rho)'$, but from now  on we understand these to represent their respective restrictions to $\sl_N(\C)^{*}$ and $(\t_R')^{*}$. 

Our first step is to show that $$\chi_\pi\in\cV_{\sl_N(\C)} L_{\C}(\lambda_{\widehat{A}}-\rho).$$

\begin{prop}\label{prop: chipi in VL}
Let $\underline{p}$ be a partition of $N$ with associated left-justified pyramid $\pi$, and let $A\in \Tab_{\F_p}(\pi)$ be column-connected. Then, defining $\chi_\pi$ and $\widehat{A}$ as above, we have $\chi_\pi\in\cV_{\sl_N(\C)} L_{\C}(\lambda_{\widehat{A}}-\rho)$.
\end{prop}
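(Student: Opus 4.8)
The plan is to show that the associated variety $\cV_{\sl_N(\C)}L_\C(\lambda_{\widehat A}-\rho)$, which is the zero set in $\sl_N(\C)^*$ of $J(\lambda_{\widehat A}-\rho)=\Ann_{S(\sl_N(\C))}(\overline v_{\lambda_{\widehat A}-\rho})$, contains the specific point $\chi_\pi$. The key structural fact is the one assembled from \cite{Br11}, namely that $\cVA'(I(\lambda_{\widehat A}-\rho))=\overline{\O}_\pi$. So the associated variety of the \emph{annihilator} $I(\lambda_{\widehat A}-\rho)$ (an ideal of $U(\sl_N(\C))$, with $\gr I$ taken in $S(\sl_N(\C))$) is exactly $\overline{\O}_\pi=\overline{\GL_N(\C)\cdot\chi_\pi}=\overline{\SL_N(\C)\cdot\chi_\pi}$, which of course contains $\chi_\pi$ itself. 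So the remaining work is to relate $\cVA(I(\lambda))$ to $\cV_{\g_\C}L_\C(\lambda)$.

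First I would recall the standard comparison between these two varieties: since $\overline v_\lambda$ generates $L_\C(\lambda)$ as a cyclic $U(\g_\C)$-module, one has $\gr I(\lambda)\subseteq J(\lambda)$ — indeed any element of the PBW-filtration piece that annihilates $L_\C(\lambda)$ in particular annihilates $\overline v_\lambda$ in the graded module — so $\cV_{\g_\C}L_\C(\lambda)\subseteq \cVA(I(\lambda))$. Conversely, there is the containment $\cVA(I(\lambda))\subseteq \cV_{\g_\C}L_\C(\lambda)$ coming from the fact that $J(\lambda)$ is stable under the adjoint action and $\cVA(I(\lambda))$ is a single orbit closure which, by Joseph's irreducibility theorem together with the Gabber/Joseph comparison, has the same dimension; I would cite the standard reference (e.g. \cite{JanNO}) that in fact $\cV_{\g_\C}L_\C(\lambda)=\cVA(I(\lambda))=\overline{\O}_\lambda$ when $\lambda$ is such that $L_\C(\lambda)$ is a highest weight module, since the associated variety of a highest weight module coincides with that of its annihilator. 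Thus $\cV_{\sl_N(\C)}L_\C(\lambda_{\widehat A}-\rho)=\overline{\O}_\pi \ni \chi_\pi$, which is exactly the claim.

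The main obstacle — and the point that needs care rather than a one-line citation — is the identification of $\cV_{\g_\C}L_\C(\lambda)$ with $\cVA(I(\lambda))$ in the $\sl_N$ setting, as opposed to merely the inclusion $\subseteq$. One has to invoke Joseph's theorem that $\cVA(I(\lambda))$ is an orbit closure, and the fact (Borho–Kraft, or Joseph) that for a simple highest weight module the characteristic variety $\cV_{\g_\C}L_\C(\lambda)$ is irreducible of dimension $\tfrac12\dim\O_\lambda$ and hence, being contained in the irreducible variety $\overline{\O}_\lambda$ of the same dimension, must equal it. I would lay this out as: (i) $\cV_{\g_\C}L_\C(\lambda)\subseteq\cVA(I(\lambda))=\overline{\O}_\lambda$; (ii) $\dim\cV_{\g_\C}L_\C(\lambda)=\dim\overline{\O}_\lambda$ since both equal the Gelfand–Kirillov dimension of $L_\C(\lambda)$ times the appropriate factor, or simply $\GK\dim L_\C(\lambda)=\dim\cV_{\g_\C}L_\C(\lambda)$ and $\GK\dim L_\C(\lambda)=\tfrac12\dim\O_\lambda$; (iii) $\cV_{\g_\C}L_\C(\lambda)$ is irreducible (being the support of a highest weight module, it is $B_\C$-stable and conical, hence its components all meet $0$, and irreducibility follows from Joseph's result on the orbital variety decomposition plus the fact that a single highest weight module's associated variety is irreducible — this is the one spot I would double-check against the literature). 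Then $\overline{\O}_\pi$ irreducible and of matching dimension forces equality, giving $\chi_\pi\in\cV_{\sl_N(\C)}L_\C(\lambda_{\widehat A}-\rho)$.
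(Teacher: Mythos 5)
There is a genuine gap: your key identification $\cV_{\g_\C}L_\C(\lambda)=\cVA(I(\lambda))$ is false whenever $\O_\lambda\neq\{0\}$, and the proposal cannot be repaired along the lines you sketch. For a highest weight module the generator satisfies $\n_\C^{+}\overline{v}_\lambda=0$ and $\t_\C\overline{v}_\lambda\subseteq L_{\C,0}(\lambda)$, so $\b_\C\subseteq\Ann_{S(\g_\C)}(\overline{v}_\lambda)$; hence $\cV_{\g_\C}L_\C(\lambda)$ lies in $\{\chi\in\g_\C^{*}\mid\chi(\b_\C)=0\}$, i.e.\ (under the trace form) inside $\n_\C^{+}$, and its dimension equals the Gelfand--Kirillov dimension of $L_\C(\lambda)$, which is $\tfrac12\dim\O_\lambda$ by Joseph/Gabber --- exactly half of $\dim\cVA(I(\lambda))=\dim\overline{\O}_\lambda$. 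So your step (ii) is wrong, and only the inclusion $\cV_{\g_\C}L_\C(\lambda)\subseteq\cVA(I(\lambda))\cap\n_\C^{+}$ survives. Knowing $\cVA'(I(\lambda_{\widehat A}-\rho))=\overline{\O}_\pi$ therefore does not place the particular point $\chi_\pi$ (i.e.\ $e_\pi\in\n_\C^{+}$) in the module's associated variety: that variety is a union of \emph{some} orbital varieties (components of $\overline{\O}_\pi\cap\n_\C^{+}$), not all of them, and one must identify a component that actually occurs and contains $e_\pi$. Your fallback (iii) via irreducibility also fails in general: associated varieties of simple highest weight modules need not be irreducible, with the known counterexamples (Kashiwara--Saito) occurring already in type $A$.

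This is precisely why the paper argues differently. It writes $\lambda_{\widehat A}-\rho=w\cdot(\lambda_{\widehat A,0}-\rho)$ with $\lambda_{\widehat A,0}-\rho$ dominant integral and invokes the lower bound of Joseph \cite[Theorem 8.15]{Jo84} and Borho--Brylinski \cite[Corollary 4.3.1]{BoBr},
$$\cV_{\sl_N(\C)}L_\C(\lambda_{\widehat A}-\rho)\supseteq\overline{\Ad(\widetilde B)\bigl(\n_\C^{+}\cap ww_0(\n_\C^{+})\bigr)},$$
and then checks combinatorially that $e_\pi\in ww_0(\n_\C^{+})$, i.e.\ $\Ad(\dot w_0\dot w^{-1})(e_\pi)\in\n_\C^{+}$, using that the entries of $\widehat A$ increase along rows (so $w^{-1}(i)>w^{-1}(i+1)$ for the relevant indices, and composing with $w_0$ reverses the inequality). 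Some input of this kind --- pinning down which part of $\overline{\O}_\pi\cap\n_\C^{+}$ is guaranteed to lie in the associated variety --- is unavoidable, and it is exactly what your proposal is missing.
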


\begin{proof}

By standard results on complex semisimple Lie algebras, there exists an element $w\in W$ and a dominant integral weight $\lambda_{\widehat{A},0}\in(\t_\C')^{*}$ such that the following two properties hold: first, the shift $\lambda_{\widehat{A},0}-\rho\in(\t_\C')^{*}$ remains dominant and integral and, second, there is equality $$\lambda_{\widehat{A}}-\rho=w\cdot(\lambda_{\widehat{A},0}-\rho)= w(\lambda_{\widehat{A},0}) - \rho.$$ By our constructions, there exists $\widehat{A}^0\in\Tab_\Z(\pi)$ such that $\lambda_{\widehat{A}^0}=\lambda_{\widehat{A},0}$; in other words, $\lambda_{\widehat{A},0}=\hat{a}^0_1 \varepsilon_1 + \cdots + \hat{a}^0_N \varepsilon_N$ where $\hat{a}^0_1,\ldots,\hat{a}^0_N\in\Z$ are the entries of $\widehat{A}^0$. The property that $\lambda_{\widehat{A},0}-\rho$ is dominant then corresponds to $$\hat{a}_1^0> \hat{a}_2^0> \cdots > \hat{a}_N^0.$$ By \cite[Theorem 8.15]{Jo84} and \cite[Corollary 4.3.1]{BoBr}, identifying $\sl_N(\C)^{*}$ with $\sl_N(\C)$ (via the trace form\footnote{We use the trace form for $\sl_N(\C)$ rather than the Killing form so that it is compatible with the bilinear form we used on $\gl_N(\C)$. Note that nothing we did in earlier sections depended on the precise non-degenerate symmetric bilinear form we were using. Note also, however, that the trace form on $\sl_N(\bK)$ ceases to be non-degenerate when $p\mid N$ and therefore our requirement that $R$ satisfy assumption (R1) is necessary here.}) and identifying $\cV_{\sl_N(\C)} L_{\C}(\lambda)$ with its corresponding subvariety of $\sl_N(\C)$, we have
$$\cV_{\sl_N(\C)} L_{\C}(\lambda) \supseteq \overline{\Ad(\widetilde{B})(\n_\C^{+}\cap ww_0(\n_\C^{+}))},$$ where $$ww_0(\n_\C^{+})=\bigoplus_{\alpha\in \Phi^{+}} \g_{\C,ww_0(\alpha)} $$ and $\widetilde{B}=B_\C\cap\SL_N(\C)$. Thus $\chi_\pi\in \cV_{\sl_N(\C)} L_{\C}(\lambda)$ if $ e_\pi\in \overline{\Ad(\widetilde{B})(\n_\C^{+}\cap ww_0(\n_\C^{+}))}$.
By construction $e_\pi\in\n_\C^{+}$ and it thus suffices to show that $e_\pi\in ww_0(\n_\C^{+})$, i.e. $\Ad(\dot{w}_0\dot{w}^{-1})(e_\pi)\in\n_\C^{+}$. Recall that $\dot{w}$ and $\dot{w}_0$ are permutation matrices lifting the elements $w, w_0\in S_N$; setting $\sigma=w_0w^{-1}\in S_N$, the product $\dot{w}_0 \dot{w}^{-1}$ is then the permutation matrix $\dot{\sigma}$ representing $\sigma$. Note that $\Ad(\dot{\sigma})(e_{ij})=e_{\sigma(i),\sigma(j)}$ for all $1\leq i,j\leq N$.

Recall that $$e_\pi=\sum_{\substack{\row(i)=\row(j) \\ \col(i)=\col(j)-1 }}e_{ij};$$ since we count boxes along rows first, each summand of this element is of the form $e_{i,i+1}$ for some $i\in\{1,\ldots, N-1\}$. Thus each summand of $$\Ad(\dot{w}_0\dot{w}^{-1})(e_\pi)=\sum_{\substack{\row(i)=\row(j) \\ \col(i)=\col(j)-1 }}\Ad(\dot{w}_0\dot{w}^{-1})(e_{ij})$$ is of the form $e_{\sigma(i),\sigma(i+1)}$ for some $i\in\{1,\ldots, N-1\}$ with $\row(i)=\row(i+1)$ and $\col(i+1)=\col(i)+1$. Fix such an $i$. Note now that $$w^{-1}(i)>w^{-1}(i+1)$$ for each $i$ in this summand since the entries of $\widehat{A}$ are increasing along rows. Furthermore, as $w_0$ sends $j$ to $N-j+1$ for each $j\in\{1,\ldots,N\}$, we have $w_0w^{-1}(i)< w_0w^{-1}(i+1)$, i.e. $\sigma(i)<\sigma(i+1)$. In particular, $e_{\sigma(i),\sigma(i+1)}\in\n_\C^{+}$ and thus $$\Ad(\dot{w}_0\dot{w}^{-1})(e_\pi)\in\n_\C^{+}.$$\end{proof}

Since $\chi$ is defined over $\Z$, if we temporarily make assumption (R2($\lambda_{\widehat{A}}-\rho$)) then we may apply Proposition~\ref{prop: R to K in VL} to get the following.

\begin{cor}\label{cor: chipi in VL mod p}
Let $\underline{p}$ be a partition of $N$ with associated left-justified pyramid $\pi$, and let $A\in \Tab_{\F_p}(\pi)$ be column-connected. Define $\chi_\pi$ and $\widehat{A}$ as above, and assume that $R$ satisfies assumptions (R1) and (R2($\lambda_{\widehat{A}}-\rho$)) (for $\g_R=\sl_N(R))$). Then we have 
$$\chi_\pi\otimes 1\in\cV_{\sl_N(\bK)}L_p(\lambda_{\widehat{A}}-\rho).$$
\end{cor}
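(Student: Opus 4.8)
The plan is to deduce this directly from Proposition~\ref{prop: R to K in VL} and Proposition~\ref{prop: chipi in VL}; the corollary is then simply the verification that the hypotheses of the former are met with $\lambda=\lambda_{\widehat{A}}-\rho$ and $\chi=\chi_\pi$.

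First I would check that $\lambda_{\widehat{A}}-\rho$ is defined over $R$. Since $\widehat{A}\in\Tab_\Z(\pi)$, its associated weight $\lambda_{\widehat{A}}$ has integer coordinates in the $\varepsilon_i$-basis, and $\rho$ likewise, so $\lambda_{\widehat{A}}-\rho\in\t_\Z^{*}$; passing to the $\sl_N$-setting via the surjection $\t_\C^{*}\twoheadrightarrow(\t_\C')^{*}$ of Subsection~\ref{ss: Prelim 2} preserves integrality, so $\lambda_{\widehat{A}}-\rho\in(\t_R')^{*}$. This is exactly what is needed for assumption (R2($\lambda_{\widehat{A}}-\rho$)) to make sense, and that assumption is in force by hypothesis. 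Secondly, I would note that $\chi_\pi$ is an $R$-linear functional on $\g_R=\sl_N(R)$: by its defining formula $\chi_\pi(e_{ij})\in\{0,1\}\subseteq\Z\subseteq R$ for $i\ne j$, and $\chi_\pi$ vanishes on the diagonal part of $\cB$, so $\chi_\pi$ restricts to an $R$-valued (indeed $\Z$-valued) $R$-linear map on $\sl_N(R)$.

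Finally, Proposition~\ref{prop: chipi in VL} gives $\chi_\pi\in\cV_{\sl_N(\C)} L_\C(\lambda_{\widehat{A}}-\rho)$ when $\chi_\pi$ is viewed as an element of $\sl_N(\C)^{*}$. All the hypotheses of Proposition~\ref{prop: R to K in VL} are therefore satisfied, and applying it with $\lambda=\lambda_{\widehat{A}}-\rho$ and $\chi=\chi_\pi$ yields $\chi_\pi\otimes 1\in\cV_{\sl_N(\bK)} L_p(\lambda_{\widehat{A}}-\rho)$, as required. There is no real obstacle here: the substantive content sits in Propositions~\ref{prop: chipi in VL} and~\ref{prop: R to K in VL}, and this corollary is only the bookkeeping needed to splice them together — the one point meriting a line of care being the $R$-rationality of both $\chi_\pi$ and $\lambda_{\widehat{A}}-\rho$, which is immediate from their explicit descriptions.
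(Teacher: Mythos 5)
Your proposal is correct and is essentially the paper's own argument: the paper simply notes that $\chi_\pi$ is defined over $\Z$ and applies Proposition~\ref{prop: R to K in VL} to the conclusion of Proposition~\ref{prop: chipi in VL} under assumption (R2($\lambda_{\widehat{A}}-\rho$)). Your extra bookkeeping on the $R$-rationality of $\lambda_{\widehat{A}}-\rho$ and $\chi_\pi$ is exactly the (easy) verification the paper leaves implicit.
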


Following Proposition~\ref{prop: irred comp Vp(l)}, our next step is therefore to show that $\chi_\pi\otimes 1$ in fact lies in an irreducible component of $\cV_{\sl_N(\bK)}L_p(\lambda_{\widehat{A}}-\rho)$ of maximal dimension. We do this in two stages: first, we show that $\cV_{\sl_N(\bK)}L_p(\lambda_{\widehat{A}}-\rho)$ is closed under a certain group action, and second we show that the dimension of the associated orbit of $\chi_\pi\otimes 1$ coincides with the dimension of $\cV_{\sl_N(\bK)}L_p(\lambda_{\widehat{A}}-\rho)$.

The group that we wish to use is essentially the Borel subgroup $B_\bK$ of $\GL_N(\bK)$; however, since at present we are considering everything for $\SL_N(\bK)$ and since we will need to use some scheme-theoretic arguments in the proofs, we introduce the relevant group (scheme) in a slightly different way.

Specifically, we define $\widetilde{B}_\F$ to be the $\F$-group scheme associated to the functor $$A\mapsto \left\{\begin{pmatrix}
a_{11} & a_{12} & \cdots & a_{1N} \\
0 & a_{22} & \cdots & a_{2N} \\
\vdots & \vdots & \ddots & \vdots  \\
0 & 0 & \cdots & a_{NN} \\
\end{pmatrix} \mid a_{ij}\in A \mbox{ and } a_{11}a_{22}\cdots a_{NN}=1\right\}$$ from the category of (finitely-generated) $\F$-algebras to the category of groups.

This group scheme is represented by the $\F$-Hopf algebra $$\F[\widetilde{B}_\F] := \F[x_{ij}\mid 1\leq i \leq j\leq N]/\langle x_{11}x_{22}\cdots x_{NN}-1\rangle, $$ with Hopf algebra structure induced from that on $\F[\SL_N]$.

Note that $\widetilde{B}_\F(A)$ acts on $\sl_N(A)$ via conjugation for any $\F$-algebra $A$ and this makes $\sl_N(\F)$ into a $\widetilde{B}_\F$-module. In particular, this means that there exists a map
$$\Delta_{\sl}: \sl_N(\F)\to\F[\widetilde{B}_\F]\otimes_\F \sl_N(\F) $$ which makes $\sl_N(\F)$ into an $\F[\widetilde{B}_\F]$-comodule. This comodule structure can then be extended to a map $$\Delta_{\sl}: U(\sl_N(\F))\to\F[\widetilde{B}_\F]\otimes_\F U(\sl_N(\F)),$$ thereby turning $U(\sl_N(\F))$ into an $\F[\widetilde{B}_\F]$-comodule algebra. 

To state the following proposition, recall that the algebraic group $\SL_N(\bK)$ also acts on $\sl_N(\bK)^{*}$ via the coadjoint action (and that this corresponds to an action of $\SL_N(\bK)$ on $S(\sl_N(\bK))$ by algebra homomorphisms).

\begin{prop}\label{prop: B stab VL}
The subvariety $\cV_{\sl_N(\bK)}L_p(\lambda_{\widehat{A}}-\rho)$ of $\sl_N(\bK)^{*}$ is $\widetilde{B}_{\bK}(\bK)$-stable.
\end{prop}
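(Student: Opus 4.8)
Write $\lambda:=\lambda_{\widehat{A}}-\rho$; since the entries of $\widehat{A}$ are integers and $\rho=-\varepsilon_1-2\varepsilon_2-\cdots-N\varepsilon_N$, this is an integral weight. The plan is to prove that the ideal $J_p(\lambda)=\Ann_{S(\sl_N(\bK))}(\gr L_p(\lambda))$ is stable under the action of $\widetilde{B}_\bK(\bK)$ on $S(\sl_N(\bK))$ by algebra automorphisms. Since $\widetilde{B}_\bK(\bK)\subseteq\SL_N(\bK)$, and (by (R1), which makes the trace form non-degenerate) this action is precisely the one corresponding, under the identification of the maximal spectrum of $S(\sl_N(\bK))$ with $\sl_N(\bK)^{*}$, to the coadjoint action on $\sl_N(\bK)^{*}$, the vanishing locus $\cV_{\sl_N(\bK)}L_p(\lambda)$ of $J_p(\lambda)$ is then automatically $\widetilde{B}_\bK(\bK)$-stable.

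The crux is to equip $L_p(\lambda)$ with a rational $\widetilde{B}_\bK$-comodule structure that is compatible with its $U(\sl_N(\bK))$-module structure -- i.e. the action map is a comodule morphism when $U(\sl_N(\bK))$ carries the comodule-algebra structure $\Delta_{\sl}$ introduced above -- and for which $\overline{w}_\lambda$ spans a $\widetilde{B}_\bK$-stable line on which $\widetilde{B}_\bK$ acts through the character $\widetilde{\lambda}$. I would obtain this by descent from characteristic zero. As $\lambda$ is integral, it is standard that $M_\C(\lambda)$, and hence its quotient $L_\C(\lambda)$, is a rational $B_\C$-module: the weights of $M_\C(\lambda)$ lie in $\lambda+\Z\Phi$ and are bounded above by $\lambda$, so $\n_\C^{+}$ acts locally finitely and locally nilpotently and its action integrates to an algebraic action of the unipotent radical of $B_\C$, which together with the weight-space action of $T_\C$ gives an algebraic $B_\C$-action compatible with the $U(\g_\C)$-action and fixing $\C\overline{v}_\lambda$ (the maximal submodule $M^{\max}_\C(\lambda)$, being $\g_\C$-stable and a sum of weight spaces, is $B_\C$-stable). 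Restricting to $\widetilde{B}_\C=B_\C\cap\SL_N(\C)$ yields a coaction $\delta\colon L_\C(\lambda)\to\C[\widetilde{B}_\C]\otimes_\C L_\C(\lambda)$ with $\delta(\overline{v}_\lambda)=\chi_\lambda\otimes\overline{v}_\lambda$ and $\delta(u\cdot v)=\Delta_{\sl}(u)\cdot\delta(v)$. Now for $u\in U(\sl_N(R))$ we have $\Delta_{\sl}(u)\in R[\widetilde{B}_R]\otimes_R U(\sl_N(R))$, while $\chi_\lambda$, the character of the integral weight $\lambda$, lies in $R[\widetilde{B}_R]$; hence $\delta$ restricts to $\delta_R\colon L_R(\lambda)=U(\sl_N(R))\overline{v}_\lambda\to R[\widetilde{B}_R]\otimes_R L_R(\lambda)$. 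Base-changing along $R\to\bK$ gives the desired compatible $\widetilde{B}_\bK$-comodule structure on $L_p(\lambda)=L_R(\lambda)\otimes_R\bK$, with $\overline{w}_\lambda=\overline{v}_\lambda\otimes 1$ spanning the $\widetilde{\lambda}$-line.

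Granting this, the remaining steps are formal. The PBW filtration $(U_n(\sl_N(\bK)))_n$ is preserved by the $\widetilde{B}_\bK$-action since that action is linear of degree one on $\sl_N(\bK)$; so the induced $\widetilde{B}_\bK$-action on $\gr U(\sl_N(\bK))=S(\sl_N(\bK))$ is exactly the adjoint one above. For $b\in\widetilde{B}_\bK(\bK)$, compatibility gives $b\cdot(u\,\overline{w}_\lambda)=(\Ad(b)u)(b\cdot\overline{w}_\lambda)=\widetilde{\lambda}(b)\,(\Ad(b)u)\,\overline{w}_\lambda$, so $L_{p,n}(\lambda)=U_n(\sl_N(\bK))\overline{w}_\lambda$ is $\widetilde{B}_\bK(\bK)$-stable for each $n$, making $\gr L_p(\lambda)$ a $\widetilde{B}_\bK(\bK)$-module on which $S(\sl_N(\bK))$ acts equivariantly: $b\cdot(f\cdot m)=(\Ad(b)f)\cdot(b\cdot m)$. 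Consequently, if $f\in J_p(\lambda)$ then $(\Ad(b)f)\cdot m=b\cdot\bigl(f\cdot(b^{-1}\cdot m)\bigr)=0$ for all $m\in\gr L_p(\lambda)$, so $J_p(\lambda)$ is $\widetilde{B}_\bK(\bK)$-stable, and therefore so is its zero set $\cV_{\sl_N(\bK)}L_p(\lambda)$, as required.

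I expect the main obstacle to be the second paragraph: verifying that $L_p(\lambda)$ genuinely admits a $\widetilde{B}_\bK$-comodule structure compatible with the conjugation coaction on $U(\sl_N(\bK))$. A priori $L_p(\lambda)$ is only a $U(\sl_N(\bK))$-module, and the content of this step is that the relations cutting it out are ``$\widetilde{B}$-equivariant''; the cleanest way I see to guarantee this is to transport the honest $B_\C$-module structure on $L_\C(\lambda)$ down through the $R$-form $L_R(\lambda)=U(\sl_N(R))\overline{v}_\lambda\hookrightarrow L_\C(\lambda)$ and then base-change, using that $U(\sl_N(R))$ is an $R[\widetilde{B}_R]$-comodule algebra and that $\chi_\lambda\in R[\widetilde{B}_R]$. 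This is the exact analogue for the modules $L_p$ of the classical fact, used by Premet, that the associated variety of a highest weight module is stable under a Borel subgroup.
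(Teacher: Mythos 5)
Your proposal is correct and follows essentially the same route as the paper: equip $L_\C(\lambda_{\widehat{A}}-\rho)$ with a $\widetilde{B}_\C$-structure compatible with the $U(\sl_N(\C))$-action (via the Verma module and stability of the maximal submodule), check using $\Delta_{\sl}(U(\sl_N(R)))\subseteq R[\widetilde{B}_R]\otimes_R U(\sl_N(R))$ and integrality of the highest weight that the coaction preserves the $R$-form $L_R(\lambda_{\widehat{A}}-\rho)$, base-change to $\bK$, and then run the formal argument that the filtration, hence $\gr L_p(\lambda_{\widehat{A}}-\rho)$ and its annihilator, are $\widetilde{B}_\bK(\bK)$-stable. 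The only cosmetic difference is that you justify the $B_\C$-action by integrating the locally nilpotent $\n_\C^{+}$-action, whereas the paper phrases the same point via $U(\widetilde{\b}_\C)=\Dist(\widetilde{B}_\C)$; the substance is identical.
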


\begin{proof}

By construction, it is sufficient to show that $\Ann_{S(\sl_N(\bK))}\gr L_p(\lambda_{\widehat{A}}-\rho)$ is a $\widetilde{B}_\bK(\bK)$-stable ideal of $S(\sl_N(\bK))$. This will follow easily if $\gr L_p(\lambda_{\widehat{A}}-\rho)$ can be equipped with the structure of a $\widetilde{B}_\bK(\bK)$-module in such a way that the module map $$S(\sl_N(\bK))\otimes_{\bK} \gr L_p(\lambda_{\widehat{A}}-\rho) \to \gr L_p(\lambda_{\widehat{A}}-\rho)$$ is $\widetilde{B}_\bK(\bK)$-equivariant. 

To define such a module structure, we begin by observing that we can equip the Verma module $M_{\C}(\lambda_{\widehat{A}}-\rho)$ with the structure of a $\C[\widetilde{B}_\C]$-comodule $$\Delta:M_{\C}(\lambda_{\widehat{A}}-\rho) \to \C[\widetilde{B}_\C]\otimes M_{\C}(\lambda_{\widehat{A}}-\rho)$$ such that the following diagram commutes
\begin{eqnarray}
	\label{e: Verma comod}
	\begin{array}{c}\xymatrix{
			U(\g_\C)\otimes_\C M_\C(\lambda_{\widehat{A}}-\rho) \ar@{->}[rr]^{\widetilde{\Delta}} \ar@{->}[d]^{m} & & \C[\widetilde{B}_\C]\otimes_\C(U(\g_\C)\otimes_\C M_\C(\lambda_{\widehat{A}}-\rho)) \ar@{->}[d]^{1\otimes m}\\
			M_\C(\lambda_{\widehat{A}}-\rho) \ar@{->}^{\Delta}[rr] & & \C[\widetilde{B}_\C]\otimes_\C M_\C(\lambda_{\widehat{A}}-\rho). 
		}
	\end{array}
\end{eqnarray}
Here, the first horizontal map is $$\widetilde{\Delta}=(m_{\C[\widetilde{B}_\C]}\otimes 1\otimes 1)\circ(1\otimes \sigma\otimes 1)\circ (\Delta_\sl \otimes \Delta)$$ where $m_{\C[\widetilde{B}_\C]}$ is the multiplication on $\C[\widetilde{B}_\C]$ and $\sigma$ is the swapping map. In particular, the comodule structure is defined so that $$\Delta(v_{\lambda_{\widehat{A}}-\rho})=x_{11}^{(\lambda_{\widehat{A}}-\rho)(e_{11})} \cdots x_{NN}^{(\lambda_{\widehat{A}}-\rho)(e_{NN})}\otimes  v_{\lambda_{\widehat{A}}-\rho}$$ (recalling that $(\lambda_{\widehat{A}}-\rho)(e_{ii})\in \Z$ for all $i=1,\ldots,N$).

Furthermore, this $\C[\widetilde{B}_\C]$-comodule structure induces a $\widetilde{B}_\C$-module structure on $M_\C(\lambda_{\widehat{A}}-\rho)$ with the property that the derivative of the action coincides with the restriction of the $\sl_N(\C)$-module structure to $\widetilde{\b}_\C=\Lie(\widetilde{B}_\C)$. Since $M_\C^{\max}(\lambda_{\widehat{A}}-\rho)$ is a $U(\widetilde{\b}_\C)$-submodule and $U(\widetilde{\b}_\C)=\Dist(\widetilde{B}_\C)$, we also have that $M_\C^{\max}(\lambda_{\widehat{A}}-\rho)$ is a $\widetilde{B}_\C$-submodule of $M_\C(\lambda_{\widehat{A}}-\rho)$ (see \cite[I.7.15]{JanRAGS} and also \cite[I.7.17(6)]{JanRAGS}). The simple highest weight module $L_\C(\lambda_{\widehat{A}}-\rho)$ may therefore also be equipped with a $\widetilde{B}_\C$-module structure and, equivalently, a $\C[\widetilde{B}_\C]$-comodule structure such that a diagram analogous to (\ref{e: Verma comod}) is commutative.

In particular, for $c_1,\ldots,c_D\geq 0$, we thus get
\begin{equation*}
\begin{split}\Delta(e_{-\gamma_D}^{c_D}\cdots e_{-\gamma_1}^{c_1}\overline{v}_{\lambda_{\widehat{A}}-\rho}) & =\Delta\circ m(e_{-\gamma_D}^{c_D}\cdots e_{-\gamma_1}^{c_1}\otimes \overline{v}_{\lambda_{\widehat{A}}-\rho})\\ & = (1\otimes m)\circ \widetilde{\Delta}(e_{-\gamma_D}^{c_D}\cdots e_{-\gamma_1}^{c_1}\otimes \overline{v}_{\lambda_{\widehat{A}}-\rho})\\
	& = (1\otimes m)\circ(m_{\C[\widetilde{B}_\C]}\otimes 1\otimes 1)\circ(1\otimes \sigma\otimes 1)\circ (\Delta_\sl(e_{-\gamma_D}^{c_D}\cdots e_{-\gamma_1}^{c_1}) \otimes \Delta(\overline{v}_{\lambda_{\widehat{A}}-\rho})).
\end{split}
\end{equation*}

Since the conjugation action of $\SL_N(\C)$ on $\sl_N(\C)$ restricts to the conjugation action of $\SL_N(R)$ on $\sl_N(R)$, the following diagram commutes
\begin{eqnarray}
\label{e: pidiagprop1}
\begin{array}{c}\xymatrix{
		U(\g_R) \ar@{->}^{\Delta_{\sl}}[rrr]\ar@{^{(}->}[d] & & & R[\widetilde{B}_R]\otimes_R U(\g_R) \ar@{^{(}->}[d]\\
		U(\g_\C) \ar@{->}[rrr]^{\Delta_{\sl}} & & & \C[\widetilde{B}_\C]\otimes_\C U(\g_\C),
	}
\end{array}
\end{eqnarray}
which implies that $$\Delta_\sl(e_{-\gamma_N}^{c_D}\cdots e_{-\gamma_1}^{c_1})\in R[\widetilde{B}_R]\otimes_R U(\sl_N(R)).$$ Furthermore, from the $\widetilde{B}_\C$-module structure on $L_\C(\lambda_{\widehat{A}}-\rho)$ we have $$\Delta(\overline{v}_{\lambda_{\widehat{A}}-\rho})=x_{11}^{(\lambda_{\widehat{A}}-\rho)(e_{11})} \cdots x_{NN}^{(\lambda_{\widehat{A}}-\rho)(e_{NN})}\otimes  \overline{v}_{\lambda_{\widehat{A}}-\rho} \in R[\widetilde{B}_R]\otimes M_R(\lambda_{\widehat{A}}-\rho).$$

We therefore conclude that $$\Delta(e_{-\gamma_D}^{c_D}\cdots e_{-\gamma_1}^{c_1}\overline{v}_{\lambda_{\widehat{A}}-\rho})\in R[\widetilde{B}_R]\otimes L_R(\lambda_{\widehat{A}}-\rho) $$ for all $c_1,\ldots,c_D\geq 0$; in particular, recalling that $L_R(\lambda_{\widehat{A}}-\rho)=U(\g_R)\overline{v}_{\lambda_{\widehat{A}}-\rho}\subseteq L_\C(\lambda_{\widehat{A}}-\rho)$, this means that $\Delta$ restricts to a map $$\Delta:L_R(\lambda_{\widehat{A}}-\rho)\to R[\widetilde{B}_R]\otimes_R L_R(\lambda_{\widehat{A}}-\rho).$$    
By base change, we then get a $\bK[\widetilde{B}_\bK]$-comodule structure
$$\Delta: L_p(\lambda_{\widehat{A}}-\rho)=L_R(\lambda_{\widehat{A}}-\rho)\otimes_R \bK \to (R[\widetilde{B}_R]\otimes_R L_R(\lambda_{\widehat{A}}-\rho))\otimes_R \bK = \bK[\widetilde{B}_\bK]\otimes_\bK L_p(\lambda_{\widehat{A}}-\rho);$$ i.e. a $\widetilde{B}_\bK$-module structure on $L_p(\lambda_{\widehat{A}}-\rho)$. From the construction, it is clear that the following diagram commutes:
\begin{eqnarray}
\label{e: equiv2}
\begin{array}{c}\xymatrix{
		U(\g_\bK)\otimes_\bK L_p(\lambda_{\widehat{A}}-\rho) \ar@{->}[rr]^{\widetilde{\Delta}} \ar@{->}[d]^{m} & & \bK[\widetilde{B}_\bK]\otimes_\bK(U(\g_\bK)\otimes_\bK L_p(\lambda_{\widehat{A}}-\rho)) \ar@{->}[d]^{1\otimes m}\\
		L_p(\lambda_{\widehat{A}}-\rho) \ar@{->}^{\Delta}[rr] & & \bK[\widetilde{B}_\bK]\otimes_\bK L_p(\lambda_{\widehat{A}}-\rho). 
	}
\end{array}
\end{eqnarray}

Since $L_p(\lambda_{\widehat{A}}-\rho)=U(\g_{\bK})\overline{w}_{\lambda_{\widehat{A}}-\rho}$ and the filtration is defined so that $L_{p,n}(\lambda_{\widehat{A}}-\rho)=U_n(\g_{\bK})\overline{w}_{\lambda_{\widehat{A}}-\rho}$, it is clear from the above diagram that the action of $\widetilde{B}_\bK(\bK)$ preserves the filtration. Indeed, the adjoint action of $\widetilde{B}_\bK(\bK)$ on $U(\g_\bK)$ preserves the PBW filtration and from construction it is clear that $\widetilde{B}_\bK(\bK) \overline{w}_{\lambda_{\widehat{A}}-\rho}\subseteq \bK \overline{w}_{\lambda_{\widehat{A}}-\rho}$. Therefore, this induces an action of $\widetilde{B}_\bK(\bK)$ on $\gr L_p(\lambda_{\widehat{A}}-\rho)$.

The diagram (\ref{e: equiv2}) gives the $\widetilde{B}_\bK(\bK)$-equivariance of the map $$U(\g_\bK)\otimes_\bK L_p(\lambda_{\widehat{A}}-\rho)\to L_p(\lambda_{\widehat{A}}-\rho),$$ which implies the $\widetilde{B}_{\bK}(\bK)$-equivariance of the map $$S(\g_\bK)\otimes_{\bK} \gr L_p(\lambda_{\widehat{A}}-\rho)\to \gr L_p(\lambda_{\widehat{A}}-\rho).$$ This concludes the proof.

\end{proof}

\begin{prop}\label{prop: dim B orb}
The subvariety $\widetilde{B}_\bK(\bK)\cdot(\chi_\pi\otimes 1)$ has dimension $\frac{1}{2}\dim\O_\pi$.
\end{prop}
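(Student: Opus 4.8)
The plan is to pin down $\dim\widetilde B_\bK(\bK)\cdot(\chi_\pi\otimes 1)$ by a two-sided estimate: a lower bound from an explicit computation of the infinitesimal stabiliser, and an upper bound from the associated-variety results already assembled. First I would pass to the adjoint picture: since $p\nmid N$ (by (R1)), the trace form is nondegenerate on $\sl_N(\bK)$, and the resulting isomorphism $\sl_N(\bK)\xrightarrow{\sim}\sl_N(\bK)^{*}$ is $\widetilde B_\bK(\bK)$-equivariant and carries $e_\pi\otimes 1$ to $\chi_\pi\otimes 1$. So it suffices to compute the dimension of the conjugation orbit $\widetilde B_\bK(\bK)\cdot(e_\pi\otimes 1)$, noting that $\GL_N(\bK)\cdot(e_\pi\otimes 1)=\O_\pi$ has dimension $N^2-\sum_{k,l}\min(p_k,p_l)$, this being the variety of nilpotent matrices of Jordan type $\underline p$, whose dimension is the same in every characteristic.

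For the lower bound I would use that $\dim(H\cdot x)\ge\dim H-\dim\mathfrak{h}_x$, where $\mathfrak{h}_x$ is the infinitesimal stabiliser (the Lie algebra of the reduced stabiliser is contained in it). Here $\mathfrak{h}_{e_\pi\otimes 1}$ is the set of $y\in\b_\bK\cap\sl_N(\bK)$ with $[y,e_\pi\otimes 1]=0$, so I need to compute the upper-triangular centraliser of $e_\pi$. Decompose $\bK^N=\bigoplus_{k=1}^r\bK[t]/(t^{p_k})$ as a $\bK[e_\pi]$-module (Jordan form, valid over any field), the $k$-th summand being the coordinate block $\{P_{k-1}+1,\dots,P_k\}$ with $P_k:=p_1+\cdots+p_k$ and cyclic generator its last coordinate. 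The $\bK[e_\pi]$-endomorphisms have the usual basis of $0/1$ matrices $\psi^{(t)}_{kl}$ (for $1\le t\le\min(p_k,p_l)$, sending the generator of block $k$ to one of the bottom $\min(p_k,p_l)$ coordinate vectors of block $l$), and a direct check shows $\psi^{(t)}_{kl}$ is upper triangular exactly when $l\le k$: for $l>k$ the generator of block $k$ (column $P_k$) is sent to a coordinate vector of block $l$ strictly below the diagonal, and since the supports of the $\psi^{(t)}_{kl}$ lie on pairwise distinct diagonals of pairwise distinct coordinate rectangles, no cancellation can occur. Hence the upper-triangular centraliser has dimension $\sum_{l\le k}\min(p_k,p_l)=\sum_{k=1}^rP_k$, and intersecting with $\sl_N(\bK)$ drops this by exactly $1$ (the identity matrix is central, upper triangular, and not in $\sl_N(\bK)$ since $p\nmid N$). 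Using $\sum_kP_k=\sum_l(r-l+1)p_l$ one gets $\sum_{k,l}\min(p_k,p_l)=2\sum_kP_k-N$, so $\dim(\b_\bK\cap\sl_N(\bK))-\bigl(\sum_kP_k-1\bigr)=\tfrac{N(N+1)}{2}-\sum_kP_k=\tfrac12\bigl(N^2-\sum_{k,l}\min(p_k,p_l)\bigr)=\tfrac12\dim\O_\pi$, which gives $\dim\widetilde B_\bK(\bK)\cdot(\chi_\pi\otimes 1)\ge\tfrac12\dim\O_\pi$.

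For the upper bound I would invoke the machinery already in place: by Corollary~\ref{cor: chipi in VL mod p}, $\chi_\pi\otimes 1\in\cV_{\sl_N(\bK)}L_p(\lambda_{\widehat A}-\rho)$; by Proposition~\ref{prop: B stab VL} this variety is $\widetilde B_\bK(\bK)$-stable, hence contains the whole orbit; and by Proposition~\ref{prop: irred comp Vp(l)} together with the remark identifying the relevant maximal dimension with $\tfrac12\dim\O_{\lambda_{\widehat A}-\rho}$ — which equals $\tfrac12\dim\O_\pi$ since $\cVA'(I(\lambda_{\widehat A}-\rho))=\overline\O_\pi$ was established above — we get $\dim\cV_{\sl_N(\bK)}L_p(\lambda_{\widehat A}-\rho)=\tfrac12\dim\O_\pi$. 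Therefore $\dim\widetilde B_\bK(\bK)\cdot(\chi_\pi\otimes 1)\le\tfrac12\dim\O_\pi$, and combining with the lower bound yields equality. The main obstacle is the stabiliser computation — identifying, uniformly in the good characteristic, precisely which elements of the centraliser of $e_\pi\otimes 1$ in $\gl_N(\bK)$ are upper triangular — and the sandwich structure is exactly what lets one avoid any delicate question about separability of the orbit map.
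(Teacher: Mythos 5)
Your argument is correct, but it is only half the paper's argument plus an extra, unnecessary ingredient. The lower bound is essentially the computation the paper does: after the same trace-form reduction to the conjugation orbit of $e_\pi\otimes 1$, the paper parametrises the centraliser of $e_\pi$ exactly as you do (via $Z(v_i)=\sum_j\sum_k a_{k,j;i}e_\pi^k(v_j)$) and observes that the condition of lying in the Borel is precisely $a_{k,j;i}=0$ for $j>i$ (plus invertibility of the diagonal coefficients). The key difference is that the paper applies this to the \emph{group} stabiliser $B_{e_\pi}=B_\bK\cap\gl_N(\bK)_{e_\pi}$, which is thereby identified as a nonempty open subset of the same linear space you compute, of dimension $\sum_{j\le i}\min(p_i,p_j)$; the formula $\dim(B\cdot e_\pi)=\dim B-\dim B_{e_\pi}$ then gives the exact dimension directly, with no separability issue and no need for a matching upper bound. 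Your sandwich via $\dim(H\cdot x)\ge\dim H-\dim\mathfrak h_x$ together with $\widetilde B_\bK(\bK)\cdot(\chi_\pi\otimes 1)\subseteq\cV_{\sl_N(\bK)}L_p(\lambda_{\widehat A}-\rho)$ and $\dim\cV_{\sl_N(\bK)}L_p(\lambda_{\widehat A}-\rho)=\tfrac12\dim\O_\pi$ is logically sound and not circular (none of Corollary~\ref{cor: chipi in VL mod p}, Proposition~\ref{prop: B stab VL} or Proposition~\ref{prop: irred comp Vp(l)} depends on this proposition), but it buys the result only under assumptions (R2($\lambda_{\widehat A}-\rho$)) and (R3($\lambda_{\widehat A}-\rho$)), since the upper bound rests on \cite[\S 3.3]{Pr07}-type facts that need them; the proposition as stated is a purely geometric fact, proved in the paper under (R1) alone and placed \emph{before} those assumptions are imposed. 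In the one place the proposition is used (Corollary~\ref{cor: chi in Vp}) those assumptions do hold, so your proof would suffice there, but the paper's direct stabiliser computation is both more elementary and stronger. (Minor slips that don't affect correctness: the images $e_\pi^t(v_l)$ are the first, not the ``bottom'', $\min(p_k,p_l)$ coordinates of block $l$; and it is the trace-zero, not determinant-one, condition that cuts the infinitesimal stabiliser down by one, as you in fact use.)
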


\begin{proof}

Using the isomorphism $\sl_N(\bK)\cong \sl_N(\bK)^{*}$, it will be enough for us to compute $\dim(\widetilde{B}_\bK(\bK)\cdot (e_\pi\otimes 1))$. (For ease of notation, we write $e_\pi$ in place of $e_\pi\otimes 1$ for the remainder of this proof.) Recalling that $$B_{\bK}=\left\{\begin{pmatrix}
a_{11} & a_{12} & \cdots & a_{1N} \\
0 & a_{22} & \cdots & a_{2N} \\
\vdots & \vdots & \ddots & \vdots  \\
0 & 0 & \cdots & a_{NN} \\
\end{pmatrix} \mid a_{ij}\in \bK \mbox{ and } a_{11}\cdots a_{NN}\neq 0 \right\}\subseteq \GL_N(\bK),$$ it is elementary that $\dim(\widetilde{B}_\bK(\bK)\cdot e_\pi)= \dim(B_\bK\cdot e_\pi)$, where the latter is a subvariety of $\gl_N(\bK)$. We compute the latter dimension, since it is easier. In what follows, we write $B$ for $B_\bK$ for ease of notation.

Note that $\dim (B\cdot e_\pi) = \dim B - \dim B_{e_\pi}$ and that $$B_{e_\pi}=B\cap \gl_N(\bK)_{e_\pi}\subseteq \gl_N(\bK),$$ where $B_{e_\pi}$ denotes the stabiliser of $e_\pi$ in $B$ and $\gl_N(\bK)_{e_\pi}$ denotes the centraliser of $e_\pi$ in $\gl_N(\bK)$. For $i=1,\ldots,r$ set $v_i$ to be the standard basis element of $\bK^N$ with a $1$ in the $(p_1+\cdots + p_i)$-th entry and zeroes elsewhere. The standard basis of $\bK^N$ can then be alternately viewed as $$\{e_\pi^k(v_i) \mid 1\leq i\leq r,\,\, 0\leq k < p_i\}.$$ As in \cite[\S 3.1]{JanNO}, an element $Z\in \gl_N(\bK)_{e_\pi}$ is uniquely determined by its effect on $v_i$ for each $i=1,\ldots,r$, since $Z(e_{\pi}^k(v_i))=e_{\pi}^k(Z(v_i))$ for all $k$. Furthermore, each $Z(v_i)$ can be written as $$Z(v_i)=\sum_{j=1}^{r} \sum_{k=\max(0,p_j-p_i)}^{p_j-1} a_{k,j;i} e_\pi^k(v_j)$$ with the $a_{k,j;i}$ chosen arbitrarily. 

Writing this in matrix form, it is easy to see that such $Z$ lies in $B$ if and only if $a_{0,1;1}a_{0,2;2}\cdots a_{0,r;r}\neq 0$ and $a_{k,j;i}=0$ whenever $j>i$. In particular, this means that $$\dim B_{e_\pi}=\sum_{1\leq j\leq i\leq r}\min(p_i,p_j).$$ 

Note that, as in \cite[\S 3.1]{JanNO}, $$\dim \gl_N(\bK)_{e_\pi}=\sum_{1\leq  i,j\leq r}\min(p_i,p_j).$$ Therefore $$\dim \gl_N(\bK)_{e_\pi} = \sum_{i=1}^{r} p_i + 2\sum_{1\leq j<i\leq r} \min(p_i,p_j)=N+2\sum_{1\leq j<i\leq r} \min(p_i,p_j).$$ This implies $$\sum_{1\leq j<i\leq r} \min(p_i,p_j) =\frac{1}{2} \left( \dim \gl_N(\bK)_{e_\pi} - N \right)$$ and thus 
$$\dim B_{e_\pi} = \sum_{1\leq j\leq i\leq r}\min(p_i,p_j)=\frac{1}{2} \left( \dim \gl_N(\bK)_{e_\pi} + N \right).$$ Hence $$\dim B - \dim B_{e_\pi} = \frac{1}{2}(2\dim B - N - \dim \gl_N(\bK)_{e_\pi})=\frac{1}{2}(\dim \gl_N(\bK) - \dim \gl_N(\bK)_{e_\pi})$$ and so $$\dim (B\cdot e_\pi) = \frac{1}{2}\dim \O_\pi$$ as required.
\end{proof}

We may put all this together into the following corollary. To state it, recall the definition of $V_p(\lambda_{\widehat{A}}-\rho)\subseteq \sl_N(\bK)^{*}$ from Subsection~\ref{ss: Assoc Var}. We now make assumptions (R1), (R2($\lambda_{\widehat{A}}-\rho$)) and (R3($\lambda_{\widehat{A}}-\rho$)) for $\g_R=\sl_N(R)$, as they are necessary now to apply (amongst others) Proposition~\ref{prop: irred comp Vp(l)}.

\begin{cor}\label{cor: chi in Vp}
Let $\underline{p}$ be a partition of $N$ with associated left-justified pyramid $\pi$, and let $A\in \Tab_{\F_p}(\pi)$ be column-connected. Define $\chi_\pi$ and $\widehat{A}$ as above, and assume that $R$ satisfies assumptions (R1), (R2($\lambda_{\widehat{A}}-\rho$)) and (R3($\lambda_{\widehat{A}}-\rho$)) (for $\g_R=\sl_N(R))$). Then we have 
$$\chi_\pi\otimes 1\in V_p(\lambda_{\widehat{A}}-\rho).$$
\end{cor}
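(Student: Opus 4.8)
The plan is to assemble the results established earlier in this subsection. First I would observe that Corollary~\ref{cor: chipi in VL mod p} gives $\chi_\pi\otimes 1\in\cV_{\sl_N(\bK)}L_p(\lambda_{\widehat{A}}-\rho)$, while Proposition~\ref{prop: B stab VL} tells us that this variety is $\widetilde{B}_\bK(\bK)$-stable. Hence the orbit $\widetilde{B}_\bK(\bK)\cdot(\chi_\pi\otimes 1)$, and therefore also its Zariski closure $X:=\overline{\widetilde{B}_\bK(\bK)\cdot(\chi_\pi\otimes 1)}$, is contained in $\cV_{\sl_N(\bK)}L_p(\lambda_{\widehat{A}}-\rho)$. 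Since the group scheme $\widetilde{B}_\bK$ is connected, the orbit is irreducible, so $X$ is an irreducible closed subvariety of $\cV_{\sl_N(\bK)}L_p(\lambda_{\widehat{A}}-\rho)$, and by Proposition~\ref{prop: dim B orb} it has dimension $\frac{1}{2}\dim\O_\pi$.

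Next I would run the dimension bookkeeping. Recall from the discussion preceding this subsection that $\cVA'(I(\lambda_{\widehat{A}}-\rho))=\overline{\O}_\pi$, so $\O_{\lambda_{\widehat{A}}-\rho}=\O_\pi$; combined with the remark following Proposition~\ref{prop: irred comp Vp(l)}, the maximal dimension of an irreducible component of $\cV_{\sl_N(\bK)}L_p(\lambda_{\widehat{A}}-\rho)$ is exactly $\frac{1}{2}\dim\O_\pi$. Thus the irreducible closed subset $X$ already has the largest dimension attained by a component. An elementary argument then shows that $X$ is itself an irreducible component of $\cV_{\sl_N(\bK)}L_p(\lambda_{\widehat{A}}-\rho)$: the set $X$ lies in some irreducible component $Y$ of $\cV_{\sl_N(\bK)}L_p(\lambda_{\widehat{A}}-\rho)$, and then $\dim X\leq\dim Y\leq\frac{1}{2}\dim\O_\pi=\dim X$ forces $X=Y$.

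Finally I would apply Proposition~\ref{prop: irred comp Vp(l)}, which is where assumptions (R1), (R2($\lambda_{\widehat{A}}-\rho$)) and (R3($\lambda_{\widehat{A}}-\rho$)) are needed: since $X$ is an irreducible component of $\cV_{\sl_N(\bK)}L_p(\lambda_{\widehat{A}}-\rho)$ of maximal dimension, it coincides with an irreducible component of $V_p(\lambda_{\widehat{A}}-\rho)$. In particular $\chi_\pi\otimes 1\in X\subseteq V_p(\lambda_{\widehat{A}}-\rho)$, which is the desired conclusion.

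The argument is a short synthesis of earlier results, so I do not anticipate a genuine obstacle. The only points needing care are the identification $\O_{\lambda_{\widehat{A}}-\rho}=\O_\pi$, which makes the two relevant ``maximal dimensions'' coincide, and the elementary observation that a closed irreducible subvariety of maximal dimension must be an irreducible component; it is precisely this observation that upgrades the mere containment $\chi_\pi\otimes 1\in\cV_{\sl_N(\bK)}L_p(\lambda_{\widehat{A}}-\rho)$ into membership in a maximal-dimensional component, which is exactly the hypothesis Proposition~\ref{prop: irred comp Vp(l)} needs in order to transfer the point into $V_p(\lambda_{\widehat{A}}-\rho)$.
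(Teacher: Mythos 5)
Your proposal is correct and follows essentially the same route as the paper: containment of the $\widetilde{B}_\bK(\bK)$-orbit of $\chi_\pi\otimes 1$ in $\cV_{\sl_N(\bK)}L_p(\lambda_{\widehat{A}}-\rho)$ via Corollary~\ref{cor: chipi in VL mod p} and Proposition~\ref{prop: B stab VL}, the dimension count from Proposition~\ref{prop: dim B orb} matching $\frac{1}{2}\dim\O_\pi$ (the paper cites \cite[3.3(2)]{Pr07} for $\dim\cV_{\sl_N(\bK)}L_p(\lambda_{\widehat{A}}-\rho)=\frac{1}{2}\dim\O_\pi$, which is the content behind the remark you invoke), and then Proposition~\ref{prop: irred comp Vp(l)} to land in $V_p(\lambda_{\widehat{A}}-\rho)$. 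Your extra step of passing to the orbit closure to exhibit an actual irreducible component of maximal dimension is a harmless, slightly more explicit version of the paper's assertion that the orbit lies in a component of maximal dimension.
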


\begin{proof}

Since Corollary~\ref{cor: chipi in VL mod p} implies that $\chi_\pi\otimes 1\in \cV_{\sl_N(\bK)}L_p(\lambda_{\widehat{A}}-\rho)$ and Proposition~\ref{prop: B stab VL} implies that $\cV_{\sl_N(\bK)}L_p(\lambda_{\widehat{A}}-\rho)$ is $\widetilde{B}_\bK(\bK)$-stable, we have $$\widetilde{B}_{\bK}(\bK)\cdot(\chi_\pi\otimes 1)\subseteq \cV_{\sl_N(\bK)}L_p(\lambda_{\widehat{A}}-\rho). $$ Since $\widetilde{B}_{\bK}(\bK)\cdot(\chi_\pi\otimes 1)$ is irreducible, it lies in an irreducible component of $\cV_{\sl_N(\bK)}L_p(\lambda_{\widehat{A}}-\rho)$. By Proposition~\ref{prop: dim B orb} and \cite[3.3(2)]{Pr07} $$\dim (\widetilde{B}_\bK(\bK)\cdot (\chi_\pi\otimes 1)) = \frac{1}{2} \dim \O_\pi = \dim \cV_{\sl_N(\bK)}L_p(\lambda_{\widehat{A}}-\rho);$$ this implies that $\widetilde{B}_\bK(\bK)\cdot(\chi_\pi\otimes 1)$ lies inside an irreducible component of $\cV_{\sl_N(\bK)}L_p(\lambda_{\widehat{A}}-\rho)$ of maximal dimension. By Proposition~\ref{prop: irred comp Vp(l)}, this implies that $\chi_\pi\otimes 1$ lies in $V_p(\lambda_{\widehat{A}}-\rho)$.
\end{proof}

Combining Corollary~\ref{cor: gl v sl}, Lemma~\ref{lem: Vp Lchi n0} and Corollary~\ref{cor: chi in Vp} then yields the following corollary.

\begin{cor}
The $U_{\chi_\pi}(\gl_N(\bK))$-module $L_p^{\chi_\pi}(\lambda_{\widehat{A}}-\rho)$ is non-zero.
\end{cor}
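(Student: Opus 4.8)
The plan is to read off the result by chaining together the three results cited just above, with essentially all of the content residing in those prior results. First I would invoke the fact, recorded just after the definition of $\chi_\pi$, that $\chi_\pi$ is in standard Levi form; this lets me apply Corollary~\ref{cor: gl v sl}, which says that $L_p^{\chi_\pi}(\lambda_{\widehat{A}}-\rho)\neq 0$ over $\gl_N(\bK)$ if and only if $L_p^{\chi_\pi'}((\lambda_{\widehat{A}}-\rho)')\neq 0$ over $\sl_N(\bK)$. So it is enough to establish non-vanishing over $\sl_N(\bK)$, where, following the notational convention adopted earlier in the subsection, I continue to write $\chi_\pi$ and $\lambda_{\widehat{A}}-\rho$ for the relevant restrictions.

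Second, Corollary~\ref{cor: chi in Vp} --- whose hypotheses (R1), (R2($\lambda_{\widehat{A}}-\rho$)) and (R3($\lambda_{\widehat{A}}-\rho$)) are all in force at this point --- gives $\chi_\pi\otimes 1\in V_p(\lambda_{\widehat{A}}-\rho)\subseteq\sl_N(\bK)^{*}$. Third, Lemma~\ref{lem: Vp Lchi n0} (Premet's \cite[Lemma 3.1]{Pr07}), applied with $\chi=\chi_\pi\otimes 1$ and $\lambda=\lambda_{\widehat{A}}-\rho$, then yields $L_p^{\chi_\pi\otimes 1}(\lambda_{\widehat{A}}-\rho)\neq 0$ over $\sl_N(\bK)$. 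Transporting this conclusion back to $\gl_N(\bK)$ via Corollary~\ref{cor: gl v sl} finishes the argument.

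I do not expect a genuine obstacle at this stage: the substantive work has already been done in building up to $\chi_\pi\otimes 1\in V_p(\lambda_{\widehat{A}}-\rho)$ --- namely the geometric computation of Proposition~\ref{prop: chipi in VL}, its base change in Corollary~\ref{cor: chipi in VL mod p}, the $\widetilde{B}_\bK(\bK)$-stability of $\cV_{\sl_N(\bK)}L_p(\lambda_{\widehat{A}}-\rho)$ in Proposition~\ref{prop: B stab VL}, the dimension match of Proposition~\ref{prop: dim B orb}, and Premet's irreducible-component result Proposition~\ref{prop: irred comp Vp(l)}. The only care required in assembling the final statement is bookkeeping: checking that the standard Levi form hypothesis is genuinely available (so that Corollary~\ref{cor: gl v sl} applies) and that the running assumptions on $R$, and hence the restrictions on $p$, needed for Corollary~\ref{cor: chi in Vp} have all been imposed.
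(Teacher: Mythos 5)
Your proposal is correct and matches the paper's argument exactly: the paper likewise obtains this corollary by combining Corollary~\ref{cor: gl v sl}, Corollary~\ref{cor: chi in Vp} and Lemma~\ref{lem: Vp Lchi n0}, with the standard Levi form of $\chi_\pi$ and the assumptions on $R$ in force just as you note.
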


Together with Corollary~\ref{cor: surj hom simple}, this then gives the theorem we desired.

\begin{theorem}\label{thm: summation}
Let $\underline{p}$ be a partition of $N$ with associated left-justified pyramid $\pi$. Let $L$ be a minimal-dimensional $U_{\chi_\pi}(\g_\bK)$-module. Then there exists $\lambda\in \t_\Q^{*}$ such that 
\begin{enumerate}
\item  $L\cong L_\chi(\widetilde{\lambda})$,
\item $I(\lambda)$ is completely prime, and
\item $\cVA'(I(\lambda))=\overline{\O}_\pi$.
\end{enumerate}
Furthermore, if there exists $R=\cS^{-1}\Z$ which satisfies (R1), (R2($\lambda$)),  (R3($\lambda$)) (so $\lambda\in\t_R^{*}$) and that $p$ is invertible in $R$, then there exists a surjection $L_p^{\chi_\pi}(\lambda)\twoheadrightarrow L$.  
\end{theorem}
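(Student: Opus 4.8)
The plan is to verify that the weight $\lambda:=\lambda_{\widehat{A}}-\rho\in\t_\Z^{*}\subseteq\t_\Q^{*}$ constructed in the preceding discussion meets all four requirements, by assembling the results already established. First, by \cite[Theorem 1.1]{GT} a minimal-dimensional $U_{\chi_\pi}(\g_\bK)$-module has the form $L_{\chi_\pi}(A)$ with $A\in\Tab_{\F_p}(\pi)$ row-equivalent to some column-connected tableau, and by the isomorphism criterion (\ref{e: isom L}) we may take $A$ itself to be column-connected, so $L\cong L_{\chi_\pi}(A)$. Lifting $A$ to the column-connected, row-standard $\widehat{A}\in\Tab_\Z(\pi)$ as above and setting $\lambda=\lambda_{\widehat{A}}-\rho$, the congruence $\widehat{A}\equiv A\pmod p$ gives $\widetilde{\lambda}=\lambda_A-\rho$ in $\t_\bK^{*}$; hence $L_{\chi_\pi}(\widetilde{\lambda})=L_{\chi_\pi}(\lambda_A-\rho)=L_{\chi_\pi}(A)\cong L$, which is (1).

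For (2) and (3) I would record the argument already given just before Proposition~\ref{prop: chipi in VL}: since the pyramid $\pi$ is left-justified, $\widehat{A}$ is semistandard in the sense of \cite{Br11}, so that the Robinson--Schensted algorithm applied to its entries returns a tableau of shape $\pi$; combining \cite[Theorems 2.2, 3.1, and 3.2]{Br11} then yields $\cVA'(I(\lambda_{\widehat{A}}-\rho))=\overline{\O}_\pi$, and the same chain of results (through \cite[Theorem 3.2]{Br11}) shows that $I(\lambda_{\widehat{A}}-\rho)$ is a Losev--Premet ideal in the sense of \cite{GTW}, hence completely prime.

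For the final clause, assume $R$ satisfies (R1), (R2($\lambda$)) and (R3($\lambda$)) with $p$ invertible in $R$. Working over $\sl_N$, Corollary~\ref{cor: chi in Vp} gives $\chi_\pi\otimes 1\in V_p(\lambda_{\widehat{A}}-\rho)$, so Lemma~\ref{lem: Vp Lchi n0} gives $L_p^{\chi_\pi}(\lambda_{\widehat{A}}-\rho)\neq 0$ over $\sl_N(\bK)$; since $\chi_\pi$ vanishes on the identity matrix, Corollary~\ref{cor: gl v sl} promotes this to $L_p^{\chi_\pi}(\lambda_{\widehat{A}}-\rho)\neq 0$ over $\gl_N(\bK)$. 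As $\chi_\pi$ is in standard Levi form, Corollary~\ref{cor: surj hom simple} (via the identification $M_p^{\chi_\pi}(\lambda)\cong Z_{\chi_\pi}(\widetilde{\lambda})$ of Proposition~\ref{prop: M and Z iso}) then furnishes the surjection $L_p^{\chi_\pi}(\lambda_{\widehat{A}}-\rho)\twoheadrightarrow L_{\chi_\pi}(\widetilde{\lambda})\cong L$. There is no genuine obstacle here, since all the substantive work --- the geometric input on associated varieties, the group-scheme argument of Proposition~\ref{prop: B stab VL}, and the dimension count of Proposition~\ref{prop: dim B orb} --- has been carried out in the preceding subsections; the one point deserving attention is the bookkeeping between the $\GL_N$ and $\SL_N$ pictures (e.g. reading (R2($\lambda$)), (R3($\lambda$)) for $\g_R=\sl_N(R)$, where the key propositions are proved, and the compatibility of restriction along $U_{\chi_\pi}(\sl_N(\bK))\hookrightarrow U_{\chi_\pi}(\gl_N(\bK))$), which is precisely what Corollary~\ref{cor: gl v sl} and the lemma preceding it are set up to handle.
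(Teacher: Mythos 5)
Your proposal is correct and follows essentially the same route as the paper, which likewise obtains the theorem by assembling the preceding results: the classification of \cite[Theorem 1.1]{GT} together with (\ref{e: isom L}) and the lift $\widehat{A}$ give (1)--(3) via the discussion around \cite{Br11}, and the final surjection comes from combining Corollary~\ref{cor: chi in Vp}, Lemma~\ref{lem: Vp Lchi n0}, Corollary~\ref{cor: gl v sl} and Corollary~\ref{cor: surj hom simple}. Your remark on the $\GL_N$ versus $\SL_N$ bookkeeping (including $\chi_\pi$ vanishing on the identity matrix) matches exactly how the paper handles that transfer.
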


\end{document}